\documentclass[a4paper,11pt]{article}
\usepackage[latin1]{inputenc}
\usepackage[T1]{fontenc}
\usepackage{lmodern}

\usepackage{amsthm,amsmath,amsfonts,amssymb,bbm,mathrsfs}
\usepackage{mathtools}
\usepackage{enumitem}
\usepackage{url}
\usepackage{dsfont} 
\usepackage{appendix}
\usepackage{amsthm}
\usepackage{color}
\usepackage{graphicx}

\usepackage[colorlinks=true, linkcolor=blue, urlcolor=black, citecolor=blue,pdfstartview=FitH]{hyperref}

\usepackage[english]{babel}

\usepackage{caption,tikz,subfigure}
\usetikzlibrary{shapes}
\usetikzlibrary{patterns}

\usepackage[top=2.7cm, bottom=2.7cm, left=2.5cm, right=2.5cm]{geometry}

\makeatletter

\@addtoreset{equation}{section}
\makeatother

\setlist[enumerate,1]{label=(\roman*), font = \normalfont} 

\let\originalleft\left
\let\originalright\right
\renewcommand{\left}{\mathopen{}\mathclose\bgroup\originalleft}
\renewcommand{\right}{\aftergroup\egroup\originalright}

\newlength{\bibitemsep}
\newlength{\bibparskip}
\let\oldthebibliography\thebibliography
\renewcommand\thebibliography[1]{\oldthebibliography{#1}
	\setlength{\parskip}{\bibitemsep}
	\setlength{\itemsep}{\bibparskip}}

\newcommand{\N}{\mathbb{N}}

\newcommand{\R}{\mathbb{R}}

\renewcommand{\P}{\mathbb{P}}
\newcommand{\E}{\mathbb{E}}

\newcommand{\cT}{\mathcal{T}}
\newcommand{\cG}{\mathcal{G}}

\DeclareMathOperator{\arctanh}{artanh}

\newcommand{\ee}{\mathrm{e}}

\newcommand{\p}{\mathbb{P}}

\theoremstyle{plain}
\newtheorem{thm}{Theorem}
\newtheorem{prop}{Proposition}[section]
\newtheorem{lem}{Lemma}[section]
\newtheorem{cor}{Corollary}[section]

\theoremstyle{definition}

\theoremstyle{remark}
\newtheorem{rem}{Remark}[section]

\usepackage{anyfontsize}

\theoremstyle{plain}
\newtheorem{thmA}{Theorem}

\date{\today}
\title{The branching random walk in a uniform magnetic field : magnetization concentration and overlap distributions}
\author{
Olivier \textsc{Zindy}\thanks{Sorbonne Universit\'e, Sorbonne Paris Cit\'e, CNRS, Laboratoire de Probabilit\'es Statistique et Mod\'elisation, LPSM, F-75005 Paris, France. Email: \texttt{olivier.zindy@sorbonne-universite.fr} or \texttt{olivier.zindy@gmail.com}.}
}

\begin{document}

\maketitle

\begin{abstract}
Adding a uniform external magnetic field to a mean-field spin-glass model usually requires a new analysis specific to the model.
 The disordered system we consider corresponds to the Gaussian binary branching random walk (BRW) --- in the spirit of Derrida and Spohn \cite{derridaspohn88}
and studied from the statistical-physics point of view by Jagannath \cite{jagannath2016} --- and we prove that this is not the case : a single elementary
observation --- that the resulting Hamiltonian is still a BRW, now with independent, but
non-identically distributed, displacements --- allows us to use the available results for {\it general} BRW.
 Combining classical and recent results on general BRW (Biggins \cite{biggins76}, Chauvin and Rouault \cite{chauvinrouault97}, Mallein \cite{Mallein2018}), one obtains
an essentially complete picture of the model in an external magnetic field : the ground state, the free energy, the
one-step replica symmetry breaking (1-RSB) transition, and the limiting {\it genealogical} overlap distribution
with Poisson--Dirichlet statistics for the Gibbs weights, extending to this correlated setting
the classical results of Derrida \cite{Derrida1981} for the REM and of Derrida and
Gardner \cite{derridagardner86b} and Bovier and Klimovsky \cite{bovierklimovsky2008} for the GREM with a uniform external field.
We then prove a strong concentration result for the magnetization under
the Gibbs measure at low temperature, giving its explicit optimal value. It turns out that this one-replica statement is not enough to control the
classical Ising overlap between two independently sampled configurations : an
elementary counterexample (Remark \ref{rem:counter-ex}) shows that concentration of each replica's
magnetization does not, by itself, determine their joint correlation. We address this by developing a two-replica large-deviation argument
 --- resting on the classical method of {\it types} and the subadditivity of Shannon entropy, and taking the form of a uniform Chernoff
bound over the joint empirical type of a pair of configurations, matched against a
two-replica concentration estimate --- which, to our knowledge, has not been used in this setting, and
which we use to obtain the distribution of a second {\it hypercube-type} overlap, which we compare with the
results of Arguin and Kistler \cite{arguinkistler14} for the REM in a {\it random} external magnetic field.
 \end{abstract} 

\bigskip

\noindent \textbf{MSC Classes:} 60G15, 60J80, 60F10, 60G70, 82D30, 82B44, 94A17.

\medskip

\noindent \textbf{Keywords:} Spin glasses, REM, GREM, branching random walk, magnetic field, large deviations, method of types, free energy, replica symmetry breaking, magnetization concentration, overlap distribution.


\section{Introduction}

\subsection{Literature}
\label{subsec:literature}

The Branching Random Walk (BRW) was introduced to the mean-field spin-glass community by
Derrida and Spohn \cite{derridaspohn88}, who proposed it as an intermediate toy model between Derrida's
Random Energy Model (REM) \cite{Derrida1981} and the Sherrington--Kirkpatrick model :  it is
analytically tractable, yet it carries a genuine local correlation structure between
configurations, absent from the REM. Over the last two decades the mathematical theory of
the BRW has developed considerably, we refer to Shi \cite{shi2015} and Zeitouni \cite{zeitouni20notes} for surveys. In particular,
Jagannath \cite{jagannath2016} studied the binary Gaussian BRW from the statistical-physics point
of view and proved that it exhibits a one-step replica symmetry breaking (1-RSB) behaviour at
low temperature, obtaining a precise form for its overlap distribution and showing that the
associated Gibbs measure satisfies the Ghirlanda--Guerra identities. In this sense, the binary
Gaussian BRW belongs to the same universality class as the REM. The critical point itself
has been studied separately by Pain \cite{pain2018}, who described the near-critical
Gibbs measure of the BRW as $\beta \downarrow \beta_c$ --- a regime
complementary to the strictly low-temperature phase $\beta > \beta_c(h)$
considered here. Mallein \cite{Mallein2018} obtained the limiting law of the overlap between two configurations
drawn according to the low-temperature Gibbs measure, as consequence of the joint convergence of
the extremal process and its genealogical structure, using in particular the description of
the extremal process obtained by Madaule \cite{madaule2017}. The overlap statistics of the closely
related branching Brownian motion (BBM) --- a continuous-time, log-correlated relative of the
BRW --- have also attracted recent attention : see Bonnefont \cite{bonnefont22} for the two-temperature
overlap distribution of the BBM, Bonnefont, Pain and Zindy \cite{bonnefontpainzindy25} for a comparison, in the
supercritical phase, between the overlap distributions and temperature susceptibilities of the
BBM and of the REM, and Chataignier and Pain \cite{chataignierpain24} for sharp asymptotics of the overlap
distribution of the BBM throughout the whole subcritical (high-temperature) phase.
The effect of an external magnetic field on such log-correlated disordered systems has been
studied along two lines, depending on whether the field is
{\it uniform} (deterministic) or {\it random}.

\medskip

\noindent\textbf{Uniform external field.} For the REM, the free energy in a uniform magnetic
field was computed heuristically by Derrida \cite[Section 9]{derrida80}, and for the GREM
by Derrida and Gardner \cite{derridagardner86b}, who determined the magnetization profile $q(x)$ of the model.
A rigorous derivation for the GREM was given by Bovier and Klimovsky \cite{bovierklimovsky2008}, who proved a
concentration result for the fluctuations of the partition function and, in particular, showed
that the coarse-grained parts of the system that survive in the thermodynamic limit carry a
well-defined {\it optimal magnetization}, determined by the strength of the external field.
The proof strategy of \cite{bovierklimovsky2008} --- reducing the analysis of the full partition function to
that of {\it partition functions restricted to a prescribed magnetization}, controlled
uniformly via a large-deviation (Chernoff-type) estimate, and matched against a global lower
bound obtained by a concentration of measure argument --- is the strategy that we adapt below,
in Section \ref{sec:magnet}, to establish Theorem \ref{thm:theorem1} for the BRW.

\medskip

\noindent\textbf{Random external field.} For the REM, the analogous problem with a
{\it random} magnetic field was introduced, on the physics side, by de Oliveira Filho, da Costa
and Yokoi \cite{OCY06}, who computed the free energy by the replica method. A fully rigorous
treatment, including the fluctuations of the ground states and a proof of one-step replica
symmetry breaking, was subsequently given by Arguin and Kistler \cite{arguinkistler14}. The corresponding
question for the (two-level) GREM with a random magnetic field was solved by
Persechino \cite{Persechino18} in his doctoral thesis, and published jointly with Arguin
in \cite{arguinpersechino19}, where the maximum, the entropy, and the free energy of the model are determined.
The proofs again rely on a large-deviation reduction to a GREM restricted to configurations of
prescribed magnetization, in the spirit of \cite{bovierklimovsky2008}. We note that \cite{Persechino18} explicitly
observes that this method is general enough to extend to the $k$-level GREM with a random
magnetic field, and to the branching random walk coupled with a random magnetic field. The
present paper may be seen as carrying out this extension to the BRW in the complementary,
technically simpler case of a {\it uniform} field, while additionally identifying the exact
{\it optimal} magnetization and relating the resulting picture to the overlap results of
Mallein \cite{Mallein2018} and Jagannath \cite{jagannath2016} for the {\it field-free} BRW. 

\medskip
\medskip

Summarizing, the key observation of this paper is that, despite the addition of a uniform
external magnetic field, the resulting model can still be seen as a BRW, for which the
displacements with respect to the parent particle remain independent, but are no longer
identically distributed. Combining recent \cite{Mallein2018} but also older \cite{biggins76,chauvinrouault97}
results on {\it general} BRW then allows for a rather complete understanding of this model from the
statistical physics point of view : the ground state and the free energy, the existence of a
1-RSB regime, the overlap distribution, and Poisson--Dirichlet statistics for the Gibbs weights
at low temperature. Building on this picture, we prove a strong concentration result for the
magnetization (Theorem \ref{thm:theorem1}), whose proof follows the large-deviation, magnetization-slicing
strategy of Bovier and Klimovsky \cite{bovierklimovsky2008}, adapted here to the BRW-correlated setting. 
It is tempting to deduce the distribution of the classical Ising overlap between two independently sampled
configurations directly from Theorem \ref{thm:theorem1}, simply by applying the concentration result to each replica
separately. We show in Section \ref{sec:overlap}  that this argument is not sufficient : the joint law of
two replicas is not determined by their marginals (see Remark \ref{rem:counter-ex}), and a genuinely
two-replica argument is required instead (see Remark \ref{rem:new2Dtech}).

 The particular two-replica implementation developed here appears to be new in this setting, and specific to this tree-indexed model. But the idea behind it is classical : a joint {\it type} that is not simply the product of its two
marginals is exponentially rare. This fact is well known in information theory, where it is called the subadditivity of entropy,
a basic result in the {\it method of types} (see Cover and Thomas \cite[Chapter 11]{CoverThomas06}). Writing this connection down explicitly --- see
Lemma \ref{lem:subadditivity} in the Appendix --- makes the proof of Theorem \ref{thm:theorem2} shorter,
and gives a simple explanation for why two unrelated replicas still share a strictly positive overlap $(m^*)^2$.
Using this two-replica large-deviation argument, we obtain the distribution of a second {\it hypercube-type} overlap (Theorem  \ref{thm:theorem2}), which we compare with
the results of Arguin and Kistler \cite{arguinkistler14} for the REM in a {\it random} external field.

\medskip

\subsection{The model}
\label{subsec:model}

We now describe the model precisely and isolate the elementary, purely algebraic identity on
which the whole paper relies.

\medskip

\noindent\textbf{The branching random walk.}
We consider a one-dimensional Gaussian binary discrete-time branching random walk (BRW) on the
real line $\mathbb{R}$. At the beginning, there is a single particle located at the origin $0$.
It has two children, which form the first generation and are positioned according to two i.i.d.\
centered Gaussian random variables with variance one. Each particle of the first generation
independently gives birth to two new particles, positioned (with respect to their birth place)
according to the same procedure; they form the second generation, and so on. For every $n\ge 1$,
each particle at generation $n$ produces two new particles, independently of one another and of
everything up to the $n$-th generation.

The particles of the branching random walk form a binary tree, denoted by $\mathcal{T}$. We call
$\varnothing$ the root. For every vertex $\sigma\in\mathcal{T}$, we denote by $|\sigma|$ its
generation (so that $|\varnothing|=0$). We write $\sigma\succ v$ if $\sigma$ is a descendant of
$v$ (equivalently, if $v$ is an ancestor of $\sigma$) in $\mathcal{T}$, so that $\mathcal{T}$
encodes the genealogy of the branching random walk. We use the lexicographic order with alphabet
$\{-1,1\}$ --- rather than the alphabet $\{0,1\}$ more commonly found in the BRW
literature --- in order to match the spin-glass notation and the definition of the overlap : any $\sigma\in\mathcal{T}$ with $|\sigma|=n$ is written
$\sigma=\sigma_1\ldots\sigma_n$, with $\sigma_i\in\{-1,1\}$ for all $i\in\{1,\ldots,n\}$. We
write $\Sigma_n:=\{-1,1\}^n$ for the set of all vertices at generation $n$. With this
convention, every vertex $\sigma\in\Sigma_n$ is simultaneously a leaf of $\mathcal T$ at
generation $n$ and an Ising spin configuration $(\sigma_1,\ldots,\sigma_n)$ on $n$ sites. This
double reading is precisely the one used throughout the REM/GREM literature discussed
above, and it is the reason for departing from the $\{0,1\}$-labelling.

\medskip

\noindent {\it \underline{Notation}.} We record here, in one place, additional notations used
throughout the paper : for $\sigma\in\Sigma_n$, $\sigma|_i :=
\sigma_1\cdots\sigma_i\in\Sigma_i$ denotes the restriction (ancestor) of $\sigma$ at
generation $i\le n$, so that $\sigma|_n=\sigma$; and $\sigma\wedge\sigma' \in
\bigcup_{i\le n}\Sigma_i$ denotes the most recent common ancestor of $\sigma,\sigma'\in
\Sigma_n$, i.e.\ the longest common prefix of $\sigma$ and $\sigma'$ read as words on
$\{-1,1\}$.

\medskip

To define the BRW properly, we associate to all vertices/particles/spin configurations (except
the root) i.i.d.\ standard Gaussian random variables $(U(\sigma))_{\sigma\in\mathcal{T}\setminus
\varnothing}$. The position of the particle $\sigma$ is then given by $X_0(\varnothing)=0$, or, for
$\sigma\ne\varnothing$, by summing the Gaussian random variables along the geodesic between
$\varnothing$ and $\sigma$ :
\[
X_n(\sigma):=\sum_{1\le i\le n}U(\sigma|_i), \qquad \forall\,\sigma\in\Sigma_n,\ \forall\,n\ge1.
\]

\begin{rem}[Ultrametric, log-correlated disorder]
Equivalently, $(X_n(\sigma))_{\sigma\in\Sigma_n}$ is a centered Gaussian field on $\Sigma_n$
with covariance structure given by :
\[
\mathrm{Cov}\left(X_n(\sigma),X_n(\sigma')\right)=|\sigma\wedge\sigma'|,
\qquad \forall \, \sigma,\sigma'\in\Sigma_n, \ \forall\,n\ge1.
\]
This {\it ultrametric} covariance structure --- degenerating to the i.i.d.\
case $\mathrm{Cov}(X_n(\sigma),X_n(\sigma'))=0$, $\sigma\ne\sigma'$, of the REM when the tree has
a single generation --- is exactly what the BRW shares with the GREM and with the branching
Brownian motion, and is the source of the strong local correlations mentioned in
Subsection \ref{subsec:literature}.
\end{rem}

\medskip

\noindent\textbf{The Hamiltonian with a uniform magnetic field.}
We now couple this disordered system with a uniform external magnetic field of strength $h>0$ :
we introduce
\[
H_n(\sigma,h):=X_n(\sigma)+h\sum_{1\le i\le n}\sigma_i, \qquad \forall\,\sigma\in\Sigma_n,\ \forall\,n\ge1.
\]
In statistical-physics language, $H_n(\sigma,h)$ is the energy of the spin configuration
$\sigma\in\Sigma_n=\{-1,1\}^n$, obtained by adding to the quenched disorder $X_n(\sigma)$ a
linear coupling of each spin $\sigma_i$ to a common external field $h$. This is exactly the
coupling considered for the REM by Derrida \cite{derrida80} and, in the random-field case, by
de Oliveira Filho, da Costa and Yokoi \cite{OCY06} and Arguin and Kistler \cite{arguinkistler14}, and for the
GREM by Derrida and Gardner \cite{derridagardner86b} and Bovier and Klimovsky \cite{bovierklimovsky2008}. 
Here it is transplanted onto the correlated Gaussian field $(X_n(\sigma))_{\sigma\in\Sigma_n}$ described above.

\medskip

\noindent\textbf{The key algebraic identity.}
A central --- and elementary --- observation of this paper is the following rewriting of
$H_n(\sigma,h)$. Since $\sigma_i$ is determined by which of the two children is visited at step
$i$, independently of the disorder accumulated so far, we may regroup the sum defining
$H_n(\sigma,h)$ term by term along the ancestral path of $\sigma$ :
\[
H_n(\sigma,h)=\sum_{1\le i\le n}\left(U(\sigma|_i)+h\,\sigma_i\right),
\]
and the law of $U(\sigma|_i)+h\sigma_i$ is manifestly $\mathcal{N}(h\sigma_i,1)$.
Consequently, $(H_n(\sigma,h))_{\sigma\in\Sigma_n,\,n\ge1}$ is itself a branching random walk,
now with reproduction law
\[
\delta_{\mathcal{N}(h,1)}+\delta_{\mathcal{N}(-h,1)},
\]
where the two Gaussian random variables $\mathcal{N}(h,1)$ and $\mathcal{N}(-h,1)$ are
independent : at each branching event, one child is displaced according to $\mathcal{N}(h,1)$
(the child labelled $+1$) and the other according to $\mathcal{N}(-h,1)$ (the child labelled
$-1$), instead of the common law $\mathcal{N}(0,1)$ of the {\it field-free} model. This is illustrated
in Figure \ref{fig:reproduction}. In particular, the reproduction law is no longer identical
across children, but the two displacements remain independent of each other and of the rest of
the tree, so that every general result available in the BRW literature --- in particular
Biggins \cite{biggins76}, Chauvin and Rouault \cite{chauvinrouault97} and Mallein \cite{Mallein2018} --- applies
directly to $H_n(\cdot,h)$.

\begin{figure}[h]
\centering
\begin{tikzpicture}[>=stealth,scale=1]
\node[circle,draw,fill=black,inner sep=1.4pt,label=above:{$\varnothing$}] (r) at (0,0) {};
\node[circle,draw,fill=black,inner sep=1.4pt,label=below:{$\sigma_1=+1$}] (p) at (-2.2,-1.7) {};
\node[circle,draw,fill=black,inner sep=1.4pt,label=below:{$\sigma_1=-1$}] (m) at (2.2,-1.7) {};
\draw[->] (r) -- (p) node[midway,above,sloped,font=\small] {$\mathcal{N}(h,1)$};
\draw[->] (r) -- (m) node[midway,above,sloped,font=\small] {$\mathcal{N}(-h,1)$};
\end{tikzpicture}
\caption{The reproduction law of the {\it tilted} BRW $H_n(\cdot,h)$ : the child labelled $+1$ is
displaced, relative to its parent, according to $\mathcal N(h,1)$, the child labelled $-1$
according to $\mathcal N(-h,1)$, the two displacements being independent. Every particle
reproduces independently according to this same law.}
\label{fig:reproduction}
\end{figure}

\begin{rem}[Two limiting regimes]
Two heuristic regimes help to situate the model. As $h\downarrow0$, the reproduction law
$\delta_{\mathcal N(h,1)}+\delta_{\mathcal N(-h,1)}$ degenerates to $\delta_{\mathcal N(0,1)}+\delta_{\mathcal N(0,1)}$, where the two $\mathcal N(0,1)$'s are independent random variables, and one recovers the {\it field-free} BRW of Derrida and Spohn \cite{derridaspohn88}, studied by
Jagannath \cite{jagannath2016}. As $h\to\infty$, the {\it optimal} magnetization $m^*(h)\in(0,1)$, defined in Equation \eqref{eq:m*}, satisfies $m^*(h)\to1$ and the Gibbs measure becomes asymptotically field-aligned, in the sense that the density of negative spins vanishes. The interesting regime, on
which this paper focuses, is the intermediate one : $0<h<\infty$, in which disorder and field
compete on the same linear scale $n$, and where --- as we shall see --- a nontrivial {\it optimal}
magnetization  $m^*(h)\in(0,1)$ emerges from this competition.
\end{rem}

\medskip

The remainder of this paper is organized as follows. Section \ref{sec:results} collects, in
Theorems \ref{thm:theoremA}--\ref{thm:theoremC}, the results on the ground state, the free energy, and the {\it genealogical} overlap distribution
that follow from the general theory of branching random walks (Biggins \cite{biggins76}, Chauvin and Rouault \cite{chauvinrouault97}, Mallein \cite{Mallein2018}),
 and states our two new results : a concentration result for the magnetization under the low-temperature Gibbs measure
(Theorem \ref{thm:theorem1}), and the distribution of a second {\it hypercube-type} overlap
(Theorem \ref{thm:theorem2}), compared with the results of Mallein \cite{Mallein2018} and Arguin and Kistler \cite{arguinkistler14}. Section \ref{sec:magnet} is devoted to
the proof of Theorem \ref{thm:theorem1} and Section \ref{sec:overlap} to that of Theorem \ref{thm:theorem2}. 
An appendix collects the many-to-one lemma (Appendix \ref{subsec:appendix-1}), the
information-theoretic fact needed, namely the subadditivity of Shannon entropy
(Appendix \ref{app:subadditivity}), and the proofs of Theorems \ref{thm:theoremA}--\ref{thm:theoremC}, together with the auxiliary Lemma \ref{lem:Psi-h}
locating the critical inverse temperature $\beta_c(h)$ (Appendix \ref{subsec:proofABC}).

\medskip

\section{Results}
\label{sec:results}

\subsection{From the BRW literature}
\label{subsec:brw-literature}

Recall from Subsection \ref{subsec:model} that $(H_n(\sigma,h))_{\sigma\in\Sigma_n,\,n\ge1}$ is a
general branching random walk with reproduction law
$\delta_{\mathcal N(h,1)}+\delta_{\mathcal N(-h,1)}$. The three results collected in this
subsection --- the ground state, the free energy, and the overlap distribution --- are direct
consequences of the general theory of BRW, and we simply record them in the notation of our
model, referring to Appendix  \ref{subsec:proofABC} for the ``proofs''.
In a pioneering work, Biggins \cite{biggins76} determined the first-order behaviour of the maximum
for a general branching random walk. Applied to   $H_n(\cdot,h)$, his result yields the ground
state of the model.


\medskip

\begin{thmA}[Ground state - Biggins \cite{biggins76}]
\label{thm:theoremA}
The ground state is given by
\begin{equation}
\label{eq:gammamax}
\lim_{n \to \infty} \frac{1}{n} \, \max_{ \sigma \in \Sigma_n} H_n(\sigma,h)= \gamma_{\max}(h) :=\beta_c + h \tanh(\beta_c h), \quad \text{a.s. and in } L^1,
\end{equation}
where $\psi_h(t):=  \frac{t^2}{2} + \log 2 + \log (\cosh(ht))$, for all $t \in \R$, and  $\beta_c=\beta_c(h)>0$ is the unique positive real number satisfying
\begin{equation}
\label{eq:beta_c}
\beta_c \psi'_h(\beta_c)- \psi_h(\beta_c)=0,
\end{equation}
(see Lemma \ref{lem:Psi-h} for existence and uniqueness of $\beta_c$).
\end{thmA}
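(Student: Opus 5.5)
We obtain Theorem \ref{thm:theoremA} by applying Biggins' theorem \cite{biggins76} to the tilted BRW $H_n(\cdot,h)$, whose reproduction law is $\delta_{\mathcal N(h,1)}+\delta_{\mathcal N(-h,1)}$ with independent displacements (Subsection \ref{subsec:model}). Biggins' result holds for general supercritical branching random walks with a finite log-Laplace transform of the point process. It states that $\frac1n\max_{|\sigma|=n} H_n(\sigma,h)\to \inf_{t>0}\kappa(t)/t$ almost surely on survival, where $\kappa(t):=\log \mathbb E\big[\sum_{|\sigma|=1}\mathrm e^{tH_1(\sigma,h)}\big]$. Here the tree is the full binary tree, so survival is certain. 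The convergence is therefore a.s. without conditioning.

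\textbf{Step 1: compute $\kappa$.} By independence of the two displacements,
\[
\kappa(t)=\log\big(\mathrm e^{th+t^2/2}+\mathrm e^{-th+t^2/2}\big)=\tfrac{t^2}{2}+\log 2+\log\cosh(ht)=\psi_h(t).
\]
This is finite for all $t$, so Biggins' hypotheses hold.

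\textbf{Step 2: identify the infimum.} The function $t\mapsto\psi_h(t)/t$ on $(0,\infty)$ tends to $+\infty$ at $0^+$, since $\psi_h(0)=\log 2>0$. It also tends to $+\infty$ at $\infty$, since $\psi_h(t)\sim t^2/2$. Its derivative is $(t\psi_h'(t)-\psi_h(t))/t^2$. The function $g(t):=t\psi_h'(t)-\psi_h(t)$ satisfies $g(0)=-\log2<0$ and $g'(t)=t\psi_h''(t)>0$ for $t>0$, by strict convexity of $\psi_h$. Hence $g$ has a unique positive zero $\beta_c$; this is the content of Lemma \ref{lem:Psi-h}, which we may invoke. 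At $\beta_c$ the infimum is attained, and it equals
\[
\psi_h(\beta_c)/\beta_c=\psi_h'(\beta_c)=\beta_c+h\tanh(\beta_c h)=\gamma_{\max}(h).
\]

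\textbf{Step 3: upgrade to $L^1$ convergence.} We need uniform integrability of $M_n:=\frac1n\max_\sigma H_n(\sigma,h)$. For the upper side, a first-moment (many-to-one) bound at level $n(\gamma_{\max}+x)$ gives $\P(M_n>\gamma_{\max}+x)\le \mathrm e^{-n\beta_c x}$, which yields domination of the positive part. For the lower side, we use $M_n\ge \frac1n H_n(\sigma^{(0)},h)$ for the fixed path $\sigma^{(0)}=(1,\dots,1)$. This quantity is $\mathcal N(nh,n)/n$, so $M_n^-$ is dominated by $|\mathcal N(0,1)|/\sqrt n$ plus a constant, which is uniformly integrable.

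The main obstacle is only bookkeeping: checking that Biggins' theorem is stated for general point-process reproduction laws, i.e. not requiring exchangeable children. It is, since it is formulated for a general point process $\mathcal L$ of children positions. Everything else is explicit calculus.
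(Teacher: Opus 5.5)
Your proof is correct. Steps 1 and 2 coincide with the paper's argument. The paper also applies Biggins' theorem using only supercriticality and finiteness of the Laplace transform, identifies its logarithm with $\psi_h$, and locates the infimum of $\psi_h(t)/t$ at $\beta_c$ via the sign of $\Delta_h(t)=t\psi_h'(t)-\psi_h(t)$ (Lemma \ref{lem:Psi-h}).

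Where you differ is the upgrade to $L^1$.

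\textbf{The paper's route.} It views $n^{-1}\max_\sigma H_n(\sigma,h)$ as an $n^{-1/2}$-Lipschitz function of the Gaussian vector $(U(v))_{1\le|v|\le n}$. Gaussian concentration then gives $\mathbb{P}(|F_n-\mathbb{E}F_n|>u)\le 2\ee^{-nu^2/2}$, hence uniform integrability.

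\textbf{Your route.} You bound the two tails separately. For the upper tail, the many-to-one identity at $t=\beta_c$, combined with $\psi_h(\beta_c)=\beta_c\gamma_{\max}(h)$ from Step 2, gives $\mathbb{P}(M_n>\gamma_{\max}(h)+x)\le \ee^{-n\beta_c x}$. For the lower tail, the fixed all-plus path gives $M_n\ge h+Z/\sqrt n$ with $Z\sim\mathcal N(0,1)$. Together these dominate $|M_n|$ in law, uniformly in $n$, by an integrable variable.

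\textbf{Comparison.} Your argument is more elementary and uses nothing Gaussian beyond $\psi_h(\beta_c)<\infty$ and integrable increments along a single path, so it would carry over verbatim to a general BRW. It also sidesteps the step the paper leaves tacit, namely that $\sup_n|\mathbb{E}F_n|<\infty$ must hold before concentration around the mean yields uniform integrability. The paper's route is shorter, since it reuses the concentration tool it needs anyway for Proposition \ref{prop:lower}. It also gives the stronger two-sided sub-Gaussian concentration of $F_n$ around its mean.
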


\medskip


\begin{rem}
\label{rem:groundstate-split}
Equation  \eqref{eq:gammamax} splits the maximal energy density into two contributions : the
energy density $\beta_c$ of a {\it constrained} ({\it field-free}) BRW, and the energy density
$h\tanh(\beta_c h)$ gained from aligning the spins along the ancestral path with the field. As
we shall see in Proposition \ref{prop:var}, $\beta_c$ is itself the entropy cost of maintaining the empirical
magnetization at its optimal value $m^*(h):=\tanh(\beta_ch)$, so that
$\gamma_{\max}(h)=\beta_c+h\,m^*(h)$ already anticipates the central role played by $m^*(h)$
throughout this note.
\end{rem}


The associated Gibbs measure and free energy, at inverse temperature $\beta>0$ and with a
uniform external field $h>0$, are respectively defined by
$$
\cG_{\beta,h,n}(\sigma)
:= \frac{\ee^{\beta H_n(\sigma,h)}}{Z_{\beta,h,n}},  \quad  \forall \, \sigma \in \Sigma_n, \qquad f_n(\beta,h) :=  \frac{\log Z_{\beta,h,n}}{n}, \qquad  \forall \,  n \ge 1,
$$
where 
$$
Z_{\beta,h,n}:= \sum_{\sigma \in \Sigma_n} \ee^{\beta H_n(\sigma,h)}
$$
is the associated partition function. As for the REM and the GREM, the model exhibits a phase
transition in the large $n$ behavior of the free energy, located precisely at the critical
inverse temperature $\beta_c(h)$ of Theorem \ref{thm:theoremA}.


\medskip

\begin{thmA}[Free energy - Chauvin and Rouault \cite{chauvinrouault97}]
\label{thm:theoremB}
For all $\beta,h>0$, the limiting free energy exists and is given by 
\begin{align*}
\lim_{n\to\infty} f_n(\beta,h) 
= f(\beta,h) :=  \left\{
\begin{array}{ll}
\log 2+  \frac{\beta^2}{2} +\log (\cosh(\beta h)), & \text{if } \beta \leq \beta_c(h), \\
 \beta \, \gamma_{\max}(h), & \text{if } \beta \geq \beta_c(h),
\end{array}
\right.
\quad \text{a.s. and in } L^1,
\end{align*}
where $\gamma_{\max}(h)$ is defined by Equation \eqref{eq:gammamax} and the critical inverse temperature $\beta_c(h)>0$ by Equation \eqref{eq:beta_c}.
\end{thmA}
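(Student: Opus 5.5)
The plan is to read Theorem \ref{thm:theoremB} off the general free-energy result for branching random walks, using the algebraic identity of Subsection \ref{subsec:model}. That identity shows $H_n(\cdot,h)$ is a BRW with reproduction point process $\mathcal L=\delta_{Y_+}+\delta_{Y_-}$, where $Y_\pm\sim\mathcal N(\pm h,1)$ are independent. Its log-Laplace transform is
\[
\psi_h(t)=\log\E\Big[\sum_{x\in\mathcal L}\ee^{t x}\Big]=\log\big(\ee^{t^2/2+th}+\ee^{t^2/2-th}\big)=\frac{t^2}{2}+\log 2+\log\cosh(ht),
\]
which is the function $\psi_h$ of Theorem \ref{thm:theoremA}. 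The first step is this computation, together with the observation that $Z_{\beta,h,n}=\sum_{|\sigma|=n}\ee^{\beta H_n(\sigma,h)}$ is exactly the additive partition function of the BRW at parameter $\beta$. Consequently $\E Z_{\beta,h,n}=\ee^{n\psi_h(\beta)}$ and $W_n(\beta):=Z_{\beta,h,n}\ee^{-n\psi_h(\beta)}$ is the additive (Biggins) martingale.

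\textbf{High temperature, $\beta<\beta_c$.} Here I would invoke Biggins' martingale convergence theorem. The tree is binary and deterministic, so there is no extinction, and Gaussian displacements give all moments. The condition for $W_n(\beta)\to W_\infty(\beta)>0$ a.s. and in $L^1$ is $\beta\psi_h'(\beta)<\psi_h(\beta)$. By Lemma \ref{lem:Psi-h}, $g(t):=t\psi_h'(t)-\psi_h(t)$ satisfies $g(0)=-\log2<0$ and $g'(t)=t\psi_h''(t)>0$ for $t>0$, so this condition is equivalent to $\beta<\beta_c$. Hence $\frac1n\log Z_{\beta,h,n}=\psi_h(\beta)+\frac1n\log W_n(\beta)\to\psi_h(\beta)$ a.s., which is the claimed formula. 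For the case $\beta=\beta_c$, I will use continuity from both sides, or treat it inside the low-temperature argument.

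\textbf{Low temperature, $\beta\ge\beta_c$.} For the lower bound, $Z_{\beta,h,n}\ge\ee^{\beta\max_\sigma H_n(\sigma,h)}$, so by Theorem \ref{thm:theoremA} we get $\liminf f_n\ge\beta\gamma_{\max}$. For the upper bound, I take $\theta<\beta_c$ close to $\beta_c$ and write
\[
Z_{\beta,h,n}\le \ee^{(\beta-\theta)\max_\sigma H_n(\sigma,h)}\,Z_{\theta,h,n}.
\]
This gives $\limsup f_n\le(\beta-\theta)\gamma_{\max}+\psi_h(\theta)$. Letting $\theta\uparrow\beta_c$ and using $\psi_h(\beta_c)=\beta_c\psi_h'(\beta_c)$ yields the bound $\beta\gamma_{\max}+\psi_h(\beta_c)-\beta_c\gamma_{\max}$. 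The bound is sharp provided $\gamma_{\max}=\psi_h'(\beta_c)=\beta_c+h\tanh(\beta_ch)$, and this identity is exactly the value given in Theorem \ref{thm:theoremA}. The two formulas therefore match at $\beta_c$, which also settles the boundary case.

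\textbf{$L^1$ convergence.} Write $Z_{\beta,h,n}=\sum_\sigma \ee^{\beta H_n}$. Then $\frac1n\log Z_{\beta,h,n}$ lies between $\frac{\beta}{n}\max_\sigma H_n$ and $\frac{\beta}{n}\max_\sigma H_n+\log 2$. Since $\frac1n\max_\sigma H_n$ converges in $L^1$ by Theorem \ref{thm:theoremA}, the sequence $f_n$ is uniformly integrable, and a.s. convergence upgrades to $L^1$ convergence.

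\textbf{Main obstacle.} I expect the delicate step to be checking the hypotheses of the general BRW result in the non-identically-distributed setting. In particular, Biggins' $L\log L$-type condition $\E[W_1(\beta)\log^+W_1(\beta)]<\infty$ must hold; this is immediate here, since $W_1$ has Gaussian-exponential moments. One must also confirm that nothing in the general theory requires exchangeable children, which it does not, because only the point process $\mathcal L$ enters. Alternatively, Chauvin--Rouault's statement can be quoted directly once $\psi_h$ is identified.
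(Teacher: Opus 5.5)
Your proof is correct, but it takes a different route from the paper. The paper simply quotes Theorem 1 of Chauvin--Rouault. It translates conventions ($X_u=-H_n(\sigma,h)$, $\ell=\psi_h$), checks that their standing assumptions and $(H_0)$ reduce to (F1)--(F2), and identifies their critical constant with $\beta_c$ through the sign of $\Delta_h$. It then obtains $L^1$ convergence from Gaussian concentration, using that $n^{-1}\log Z_{\beta,h,n}$ is $\beta n^{-1/2}$-Lipschitz in the Gaussian vector.

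You instead rebuild the result from more primitive inputs. For $\beta<\beta_c$ you use Biggins' additive-martingale theorem. You should state explicitly that $W_\infty>0$ a.s.: $q:=\P(W_\infty=0)$ satisfies $q=q^2$ on the binary tree, and $\E W_\infty=1$ rules out $q=1$. For $\beta\ge\beta_c$ you use Theorem \ref{thm:theoremA} with the sandwich $\ee^{\beta\max_\sigma H_n}\le Z_{\beta,h,n}\le \ee^{(\beta-\theta)\max_\sigma H_n}Z_{\theta,h,n}$, letting $\theta\uparrow\beta_c$ along a countable sequence so the a.s.\ statements can be intersected.

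What your route buys:
\begin{itemize}
\item It makes the freezing mechanism transparent: the maximum dominates once $\beta\ge\beta_c$.
\item It handles $\beta=\beta_c$ with no extra work.
\item Your uniform-integrability argument, $\beta n^{-1}\max_\sigma H_n\le f_n(\beta,h)\le \beta n^{-1}\max_\sigma H_n+\log 2$, is deterministic. It transfers the $L^1$ convergence of Theorem \ref{thm:theoremA} instead of rerunning concentration.
\end{itemize}

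The price is that Theorem \ref{thm:theoremB} now depends logically on Theorem \ref{thm:theoremA} and on Biggins' martingale theorem, whereas the paper's citation gives it independently. Both are legitimate here, since Theorem \ref{thm:theoremA} is already established.
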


\medskip


Following Jagannath \cite{jagannath2016}, let us introduce the {\it genealogical} overlap I
between two configurations $\sigma,\sigma'\in\Sigma_n$, $n\ge1$, defined by
\[
q_n(\sigma,\sigma'):=
\begin{cases}
1, & \text{if } \sigma=\sigma',\\[2pt]
\dfrac1n\left(\min\{1\le i\le n:\sigma_i\ne\sigma_i'\}-1\right), & \text{otherwise}.
\end{cases}
\]
In words, $q_n(\sigma,\sigma')$ is the (rescaled) generation of the most recent common ancestor
of $\sigma$ and $\sigma'$: it measures how far, genealogically, the two configurations remain
identical before branching apart. Mallein \cite{Mallein2018} obtained the limiting law of this
overlap between two configurations drawn independently according to the low-temperature Gibbs
measure, by studying the joint convergence of the extremal process together with its
genealogical structure, building in particular on the description of the extremal process
obtained by Madaule \cite{madaule2017}.


\medskip

\begin{thmA}[Overlap I distribution -  Mallein \cite{Mallein2018}]
\label{thm:theoremC} 
Let $h>0$ and  $\beta>\beta_c(h)>0$, where $\beta_c(h)$ is defined by Equation \eqref{eq:beta_c}. Then, 
\begin{align*}
  \lim_{n \to +\infty} \E \left[ \cG_{\beta,h,n}^{\otimes 2}\{ q_n(\sigma,\sigma') \in \, \cdot \, \} \right] =  \frac{\beta_c(h)}{\beta} \delta_0 + \left(1 -  \frac{\beta_c(h)}{\beta}\right) \delta_1.
  \end{align*}
\end{thmA}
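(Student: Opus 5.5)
The plan is to deduce Theorem \ref{thm:theoremC} directly from Mallein's result \cite{Mallein2018} on the genealogy of the extremal process of a general BRW. By the key algebraic identity of Subsection \ref{subsec:model}, $H_n(\cdot,h)$ is such a BRW, with point process of children $\mathcal L=\delta_{\mathcal N(h,1)}+\delta_{\mathcal N(-h,1)}$ (independent Gaussians). I would first normalize it to the boundary case, which is the setting of \cite{Mallein2018}.

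\emph{Step 1: log-Laplace transform.} Compute
\[
\kappa(t)=\log\E\Bigl[\sum_{\ell\in\mathcal L}\ee^{t\ell}\Bigr]=\log\bigl(\ee^{t^2/2+th}+\ee^{t^2/2-th}\bigr)=\psi_h(t).
\]
\emph{Step 2: renormalize.} With $\beta_c$ as in \eqref{eq:beta_c}, set
\[
V(\sigma):=\beta_c H_n(\sigma,h)-n\psi_h(\beta_c).
\]
Then $\E\sum_{|u|=1}\ee^{V(u)}=1$ and $\E\sum_{|u|=1}V(u)\ee^{V(u)}=\beta_c\psi_h'(\beta_c)-\psi_h(\beta_c)=0$. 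This is the boundary case, with the sign convention adjusted to whichever of max or min Mallein uses. Note that $\psi_h(\beta_c)/\beta_c=\psi_h'(\beta_c)=\beta_c+h\tanh(\beta_c h)=\gamma_{\max}(h)$, which is consistent with Theorem \ref{thm:theoremA}.

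\emph{Step 3: integrability hypotheses.} Mallein's assumptions are the usual ones: $\E\sum V^2\ee^{V}<\infty$, and an $X\log X$-type condition on $\sum\ee^{V}$ and $\sum V_+\ee^{V}$. They hold trivially here, because there are exactly two children with Gaussian displacements, so every exponential moment is finite. The reproduction law is supercritical and non-lattice, and survival is certain.

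\emph{Step 4: transfer of the theorem.} Mallein proves that for inverse temperature $\beta'>1$ in the boundary normalization, the Gibbs measure $\propto \ee^{\beta' V(\sigma)}$ on generation $n$ satisfies the following. The rescaled generation of the most recent common ancestor of two independent replicas converges in annealed law to $\frac1{\beta'}\delta_0+(1-\frac1{\beta'})\delta_1$. This comes from the Poisson--Dirichlet$(1/\beta')$ structure of the extremal weights, where distinct atoms of the limiting decorated process branch at times $o(n)$ and replicas within a decoration branch at times $n-O(1)$. Since $\ee^{\beta H_n}=\ee^{(\beta/\beta_c)V}\,\ee^{n\beta\psi_h(\beta_c)/\beta_c}$ and the last factor is deterministic, $\cG_{\beta,h,n}$ coincides with that Gibbs measure for $\beta'=\beta/\beta_c(h)>1$. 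Finally, $q_n$ is exactly $|\sigma\wedge\sigma'|/n$, with the value $1$ on the diagonal matching generation $n$. The statement then follows with weight $\beta_c(h)/\beta$ at $0$.

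\emph{Main obstacle.} The main difficulty is purely one of bookkeeping. First, one must check that Mallein's framework allows a non-exchangeable reproduction law, i.e.\ a general point process $\mathcal L$ rather than i.i.d.\ displacements. I believe it does, since his work and Madaule's are stated for general point processes. Second, one must match the normalization and convergence mode (annealed expectation of $\cG^{\otimes2}$) to his statement, including the sign convention (maximum vs.\ minimum) and the precise form in which the limit $\frac{1}{\beta'}\delta_0+(1-\frac1{\beta'})\delta_1$ appears. If his result is phrased only in probability for the quenched overlap law, I would pass to the expectation by bounded convergence.
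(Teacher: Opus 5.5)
Your proposal is correct and follows essentially the same route as the paper: tilt $H_n(\cdot,h)$ at $\beta_c$ into the boundary case, check Mallein's assumptions (1.1)--(1.4) and non-latticity using the two independent Gaussian children, and identify $\mathcal G_{\beta,h,n}$ with the tilted Gibbs measure at $\beta'=\beta/\beta_c>1$. The paper uses the minimizing orientation $V_n=\psi_h(\beta_c)n-\beta_c H_n$, and it makes explicit one piece of bookkeeping that you should also spell out: Mallein's Theorem 4.3 gives convergence \emph{in law} of the quenched overlap measure to the random limit $(1-\pi_{\beta'})\delta_0+\pi_{\beta'}\delta_1$, where $\pi_{\beta'}=\sum_k p_k^2$ for a Poisson--Dirichlet$(1/\beta',0)$ partition, so you pass to expectations via the bounded continuous maps $\mu\mapsto\mu(\varphi)$ and then use $\E[\sum_k p_k^2]=1-1/\beta'=1-\beta_c(h)/\beta$ (Pitman--Yor) to obtain the deterministic weights.
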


\medskip


\begin{rem}
\label{rem:RS-1RSB}
Chauvin and Rouault \cite{chauvinrouault97} proved that the limit above is $\delta_0$ when $\beta\le\beta_c(h)$. 
In the language of the Replica theory of spin glasses, the model is therefore
{\it Replica Symmetric} (RS) at high temperature $\beta\le\beta_c(h)$ and exhibits
{\it one-step Replica Symmetry Breaking} (1-RSB) at low temperature $\beta>\beta_c(h)$, with
transition exactly at $\beta_c(h)$.
\end{rem}


\medskip

\begin{rem}
\label{rem:PD}
Mallein \cite{Mallein2018} also proved that, for every $\beta>\beta_c(h)$, the Gibbs weights
$(\mathcal G_{\beta,h,n}(\sigma))_{\sigma\in\Sigma_n}$, ranked in decreasing order, converge as
$n\to\infty$ to a Poisson--Dirichlet random variable with parameter $\beta_c(h)/\beta$. Together
with Remark \ref{rem:RS-1RSB}, this places the model, at low temperature, in the same
universality class as the REM and the GREM with a uniform external magnetic field.
\end{rem}

\subsection{New results}
\label{subsec:new-results}

Theorem \ref{thm:theoremC} describes the {\it genealogical} relationship between two independently sampled
configurations, but it says nothing about the spins they actually carry. To go further, let us
introduce the empirical magnetization of a configuration $\sigma\in\Sigma_n$, $n\ge1$, and the
{\it optimal} magnetization (depending on $h$) :
\begin{equation}
\label{eq:m*}
y_n(\sigma):=\frac1n\sum_{1\le i\le n}\sigma_i,
\qquad\qquad
m^*=m^*(h):=\tanh(\beta_c(h)\,h).
\end{equation}
The quantity $m^*(h)$ already appeared implicitly in Remark  \ref{rem:groundstate-split} : it is
the magnetization carried, to leading order, by the ground state of Theorem \ref{thm:theoremA}. Our first new
result shows that this is not merely a feature of the extremal configurations, but a bulk
property of the whole low-temperature Gibbs measure, in more concrete terms : a configuration sampled according to
$\mathcal G_{\beta,h,n}$ has an empirical
magnetization within any prescribed distance $\eta>0$ of $m^*(h)$, except on
an event of $\mathcal G_{\beta,h,n}$-probability exponentially small in
$n$, for {\it every} $\beta>\beta_c(h)$ and --- this is the genuinely new statement, extending beyond the
single point $\beta=\beta_c(h)$ implicit in Remark \ref{rem:groundstate-split} --- not only at $\beta=\beta_c(h)$. This
is the counterpart, for the correlated BRW, of the concentration of magnetization result
obtained by Bovier and Klimovsky \cite{bovierklimovsky2008} for the GREM with a uniform magnetic field ---
recall the discussion in Subsection \ref{subsec:literature} --- and its proof, given in Section \ref{sec:magnet}, follows the same
large-deviation strategy : the Gibbs measure is sliced according to the value of $y_n(\sigma)$,
each slice is controlled by a uniform Chernoff bound, and the bound is matched against a global
lower bound on $Z_{\beta,h,n}$ obtained by Gaussian concentration of measure. Applying this result independently to the two replicas gives joint
concentration of both magnetizations near $(m^*,m^*)$ (see Corollary \ref{cor:marginals} below) 
--- though, as we discuss in Subsection \ref{subsec:coro_immediate} and Remark \ref{rem:counter-ex}, this one-replica
statement alone is {\it not} sufficient to control the  {\it hypercube-type}  overlap itself, and that a two-replica large-deviation argument
is required. This argument, developed in Section \ref{sec:overlap}, also gives a useful two-replica large-deviation method (see Remark \ref{rem:new2Dtech}).


\medskip

\begin{thm}[Concentration of the magnetization]
\label{thm:theorem1} 
Fix $h>0$ and let $\beta > \beta_c(h)>0$, where $ \beta_c(h)$ is defined by Equation \eqref{eq:beta_c}. Then, for any $\eta>0$,
\begin{align*}
  \lim_{n \to +\infty} \E \left[ \cG_{\beta,h,n}\{ \vert y_n(\sigma) - m^* \vert > \eta \} \right] =  0.
  \end{align*}
More precisely, there exist $c=c(\beta,h,\eta)>0$ and $C=C(\beta,h,\eta)<+\infty$, such that 
\begin{equation}
\label{eq:quant}
 \E \left[ \cG_{\beta,h,n}\{ \vert y_n(\sigma) - m^* \vert > \eta \} \right] \le C \ee^{-c n}, \qquad \forall \, n \geq 1,
  \end{equation}
  which implies
  \begin{align*}
\lim_{n\to\infty}\mathcal G_{\beta,h,n}\{\vert y_n(\sigma)-m^*\vert>\eta\}=0, \qquad \textrm{a.s.}
  \end{align*}
\end{thm}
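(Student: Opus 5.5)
We follow the magnetization-slicing strategy of Bovier and Klimovsky. For a closed set $A\subset[-1,1]$ put
\[
Z^{A}_{\beta,h,n}:=\sum_{\sigma\in\Sigma_n:\, y_n(\sigma)\in A}\ee^{\beta H_n(\sigma,h)}=\sum_{\sigma:\,y_n(\sigma)\in A}\ee^{\beta X_n(\sigma)+\beta h n y_n(\sigma)} .
\]
The quantity to bound is $\mathcal G_{\beta,h,n}\{|y_n-m^*|>\eta\}=Z^{A_\eta}/Z$, with $A_\eta:=\{|y-m^*|\ge\eta\}\cap[-1,1]$. We shall show that $Z^{A_\eta}\le \ee^{n(\beta\gamma_{\max}(h)-2\delta)}$ and $Z\ge \ee^{n(\beta\gamma_{\max}(h)-\delta)}$, each outside an event of exponentially small probability. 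Since the Gibbs probability is at most $1$, this gives $\E[\cdot]\le \ee^{-\delta n}+\P(\text{bad})\le C\ee^{-cn}$. The almost sure statement then follows from Markov's inequality and Borel--Cantelli.

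\textbf{Step 1: restricted maxima via slices.} Partition $A_\eta$ into $O(n)$ exact magnetization levels $y=k/n$. For a fixed $y$ there are $\binom{n}{n(1+y)/2}\le \ee^{nI(y)}$ configurations, where $I(y)=\log2-\frac{1+y}{2}\log(1+y)-\frac{1-y}{2}\log(1-y)$ is the binary entropy. At low temperature we bound $Z^{A_\eta}$ by the number of terms times the largest term (up to an $\ee^{o(n)}$ factor):
\[
Z^{A_\eta}\le 2^n \max_{y_n(\sigma)\in A_\eta}\ee^{\beta H_n}.
\]
This is too crude as stated. Instead we use a first-moment/union bound at the level of maxima: $\P\bigl(\max_{y_n(\sigma)=y}X_n(\sigma)\ge nx\bigr)\le \ee^{n(I(y)-x^2/2)}$. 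Hence, with probability $1-\ee^{-cn}$, uniformly over $y\in A_\eta$ we have $\max_{y_n=y}X_n\le n(\sqrt{2I(y)}+\epsilon)$.

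We must also handle the entropic sum, not only the maximum. For the part of the sum where $X_n(\sigma)\le n x_0$ we bound the count by $\ee^{nI(y)}$, so this part is at most $\ee^{n(I(y)+\beta x_0+\beta h y)}$. A short union bound over a grid of $x$-levels handles the full sum. It gives, with overwhelming probability,
\[
\tfrac1n\log Z^{A_\eta}\le \sup_{y\in A_\eta}\ \sup_{0\le x\le \sqrt{2I(y)}}\bigl[I(y)-\tfrac{x^2}{2}+\beta x+\beta h y\bigr]_{\text{trunc}}+\epsilon .
\]
Here $I(y)-x^2/2$ is replaced by $0$ near the edge, in the usual REM fashion. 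For $\beta$ above the slice's own critical value $\sqrt{2I(y)}$, the inner supremum equals $\beta(\sqrt{2I(y)}+hy)$. For smaller slice-critical values it equals the REM high-temperature expression $I(y)+\beta^2/2+\beta hy$, which is $\le\beta(\sqrt{2I(y)}+hy)$ as well whenever $\beta\ge\sqrt{2I(y)}$. Note that this first moment along the path ignores the tree correlations, which only helps: the bound is an upper bound.

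\textbf{Step 2: the variational problem.} The key deterministic fact, presumably the content of Proposition~\ref{prop:var}, is that
\[
g(y):=\sqrt{2I(y)}+hy
\]
is strictly concave on $[-1,1]$ and attains its unique maximum $\gamma_{\max}(h)=\beta_c+h\tanh(\beta_c h)$ at $y=m^*=\tanh(\beta_ch)$, with $\sqrt{2I(m^*)}=\beta_c$. This follows from Legendre duality: $\sup_y[I(y)+\beta hy]=\log2+\log\cosh(\beta h)$, combined with \eqref{eq:beta_c}. Strict concavity then gives $\sup_{A_\eta}g\le\gamma_{\max}-3\delta/\beta$ for some $\delta(\eta)>0$. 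One must also check that slices with $\sqrt{2I(y)}>\beta$ do not beat this bound. This is where $\beta>\beta_c$ enters: the high-temperature slice value is $I(y)+\beta^2/2+\beta hy$, and its supremum over $y$ is $f$ at high temperature, which for $\beta>\beta_c$ is strictly below $\beta\gamma_{\max}$ by convexity of $\psi_h$. I expect this bookkeeping to be the main obstacle: obtaining uniform exponential control over all slices and all energy levels simultaneously, including the boundary $y\to\pm1$, where $I'$ blows up.

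\textbf{Step 3: lower bound on $Z$.} We use $Z\ge \ee^{\beta\max_\sigma H_n}$ together with Theorem~\ref{thm:theoremA}. To make the lower bound quantitative, observe that $\frac1n\log Z$ is a $\beta/\sqrt n$-Lipschitz function of the Gaussian vector $(U(\sigma))$, in the appropriate normalisation, since $\mathrm{Var}X_n=n$. Gaussian concentration gives $\P\bigl(\tfrac1n\log Z\le \E\tfrac1n\log Z-\delta/2\bigr)\le \ee^{-c n}$. Theorem~\ref{thm:theoremB} (the $L^1$ convergence) gives $\E\tfrac1n\log Z\ge\beta\gamma_{\max}-\delta/2$ for large $n$. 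Combining Steps 1--3 yields \eqref{eq:quant} for $n$ large, and enlarging $C$ covers the finitely many small $n$.
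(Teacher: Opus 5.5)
Your architecture is the paper's, and Steps 1 and 3 are fine in substance. It consists of magnetization slicing, then a uniform first-moment bound per slice over a grid of energy levels giving $\tfrac1n\log Z^{A_\eta}\le\sup_{y\in A_\eta}\Phi_h(\beta,y)+\epsilon$ with $\Phi_h(\beta,y)=\beta hy+\max_{0\le x\le\sqrt{2I(y)}}\{I(y)-x^2/2+\beta x\}$, and finally a Gaussian-concentration lower bound on $Z$ via the $L^1$ convergence in Theorem~\ref{thm:theoremB}.

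\textbf{The gap is in Step 2}, exactly where $\beta>\beta_c$ has to be used. It concerns the high-entropy slices $\{y:\sqrt{2I(y)}>\beta\}$, which are non-empty whenever $\beta_c<\beta<\sqrt{2\log 2}$. On these slices $\Phi_h(\beta,y)=I(y)+\beta^2/2+\beta hy$. By AM--GM ($I+\beta^2/2\ge\beta\sqrt{2I}$) this is \emph{strictly larger} than $\beta g(y)$, so your Step 1 remark comparing the two expressions has the inequality backwards. Consequently the gap $\sup_{A_\eta}g\le\gamma_{\max}-3\delta/\beta$ does not transfer to these slices.

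Your proposed control for them is $\sup_y[I(y)+\beta^2/2+\beta hy]=\psi_h(\beta)<\beta\gamma_{\max}(h)$ ``by convexity'', and this is false. Convexity gives the opposite: $\psi_h(\beta)\ge\psi_h(\beta_c)+(\beta-\beta_c)\psi_h'(\beta_c)=\beta\gamma_{\max}(h)$, strictly for $\beta>\beta_c$ (Lemma~\ref{lem:Psi-h}(iii)). This is just the annealed bound exceeding the quenched free energy. As written, nothing excludes these slices.

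\textbf{The missing idea} is to use the constraint under which the high-temperature formula holds. On $\{I(y)\ge\beta^2/2\}=[-m_+(\beta),m_+(\beta)]$, the concave function $I(y)+\beta^2/2+\beta hy$ has its unconstrained maximizer $\tanh(\beta h)$ outside the interval. Indeed $I(\tanh(\beta h))-\beta^2/2=\psi_h(\beta)-\beta\psi_h'(\beta)<0$ for $\beta>\beta_c$. Its maximum over the interval is therefore at the endpoint $m_+(\beta)$, where the two formulas agree. This gives $\beta g(m_+(\beta))<\beta g(m^*)$, since $m_+(\beta)<m^*$. That is how the paper argues in Proposition~\ref{prop:var}(iii).

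A shorter repair avoids the case split. For $0\le x\le\sqrt{2I(y)}$, write $I(y)-\tfrac{x^2}{2}+\beta x+\beta hy=[I(y)+\beta_c hy]+[\beta_c x-\tfrac{x^2}{2}]+(\beta-\beta_c)(x+hy)$. The three brackets are bounded as follows:
\begin{itemize}
\item the first is at most $\log2+\log\cosh(\beta_c h)$, by Legendre duality;
\item the second is at most $\beta_c^2/2$;
\item the third is at most $(\beta-\beta_c)g(y)$, since $x\le\sqrt{2I(y)}$ and $\beta>\beta_c$.
\end{itemize}
Using $\psi_h(\beta_c)=\beta_c\gamma_{\max}(h)$, this yields $\Phi_h(\beta,y)\le\beta\gamma_{\max}(h)-(\beta-\beta_c)\bigl(\gamma_{\max}(h)-g(y)\bigr)$ for every $y\in[-1,1]$. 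Your strict concavity of $g$ then gives the uniform gap on $A_\eta$ directly. With either fix, the rest of your argument goes through.
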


\medskip

In order to compare this picture with the one obtained by Arguin and Kistler \cite{arguinkistler14} for the
REM in a {\it random} magnetic field, it is natural to consider, alongside the {\it genealogical}
overlap  I, the more classical Ising {\it hypercube-type} overlap. For all $n\ge1$ and $\sigma,\sigma'\in
\Sigma_n$, let
\[
r_n(\sigma,\sigma'):=\frac1n\sum_{1\le i\le n}\sigma_i\sigma_i'\in[-1,1]
\]
and denote this {\it overlap II}. Combining the genealogical description of  Theorem \ref{thm:theoremC} with the two-replica
large-deviation technique of Section \ref{sec:overlap} then yields the following distribution for
overlap II.


\medskip

\begin{thm}[Overlap II distribution]
\label{thm:theorem2}  
Fix $h>0$ and let $\beta > \beta_c(h)>0$, where $ \beta_c(h)$ is defined by Equation \eqref{eq:beta_c} and $m^*=m^*(h)$ by Equation \eqref{eq:m*}. Then
\begin{align*}
  \lim_{n \to +\infty} \E \left[ \cG_{\beta,h,n}^{\otimes 2}\{ r_n(\sigma,\sigma') \in \, \cdot \, \} \right] =  \frac{\beta_c(h)}{\beta} \delta_{(m^*)^2} + \left(1 -  \frac{\beta_c(h)}{\beta}\right) \delta_1.
  \end{align*}
\end{thm}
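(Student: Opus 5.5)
We split the double Gibbs measure according to the genealogical overlap $q_n(\sigma,\sigma')$ of Theorem \ref{thm:theoremC}. Fix small $\varepsilon,\eta>0$ and write $\mathcal G^{\otimes 2}=\mathcal G^{\otimes 2}\{q_n\le\varepsilon\}+\mathcal G^{\otimes 2}\{\varepsilon<q_n<1-\varepsilon\}+\mathcal G^{\otimes 2}\{q_n\ge 1-\varepsilon\}$. By Theorem \ref{thm:theoremC}, the middle term has vanishing expectation, while the first and last have expectations converging to $\beta_c/\beta$ and $1-\beta_c/\beta$. It therefore suffices to show two things. First, on $\{q_n\ge1-\varepsilon\}$ we have $|r_n-1|\le 2\varepsilon$. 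This is deterministic, since $\sigma$ and $\sigma'$ agree on their first $(1-\varepsilon)n$ spins. Second,
\[
\E\left[\mathcal G^{\otimes2}_{\beta,h,n}\{q_n(\sigma,\sigma')\le\varepsilon,\ |r_n(\sigma,\sigma')-(m^*)^2|>\eta\}\right]\to0 .
\]
Letting $n\to\infty$ and then $\varepsilon\to0$ will then identify the limit on continuity sets, which gives weak convergence.

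For the second claim we cannot simply use Theorem \ref{thm:theorem1} replica by replica. Remark \ref{rem:counter-ex} shows that concentration of $y_n(\sigma)$ and $y_n(\sigma')$ alone does not force $r_n\approx (m^*)^2$. Instead we use the joint type. For a pair $(\sigma,\sigma')$ with $q_n\le\varepsilon$, let $p\in\mathcal P(\{-1,1\}^2)$ be the empirical distribution of $(\sigma_i,\sigma'_i)_{i>\lfloor q_n n\rfloor+1}$. Up to $O(\varepsilon)$, we then have $y_n(\sigma),y_n(\sigma'),r_n$ as the marginal means and the correlation of $p$.

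Next comes a uniform Chernoff (first-moment) bound on two-replica restricted partition functions. Let $Z^{(2)}_{n}(A):=\sum e^{\beta(H_n(\sigma)+H_n(\sigma'))}$ over pairs whose last common ancestor is at generation $k\le\varepsilon n$ and whose type lies in $A$. Above generation $k$ the two subtrees are independent, so for $\beta>\beta_c$ we do not take expectations of the exponential. Instead we bound $\max$-type quantities through a truncated first moment at the exponent level: $\frac1n\log$ of the number of pairs of type $p$ is at most $H(p)$. The energy of each branch of marginal type $p_j$ is controlled by Theorem \ref{thm:theoremA} applied to magnetization-restricted BRWs, as in Section \ref{sec:magnet}. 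The key inequality is the subadditivity $H(p)\le H(p_1)+H(p_2)$ of Lemma \ref{lem:subadditivity}, with equality iff $p=p_1\otimes p_2$. Since the two subtrees are disjoint, the energy contributions depend only on the marginals, while the entropy of the joint type falls strictly below that of the product whenever $p$ is at distance $\ge\delta$ from $p_1\otimes p_2$. Because $\beta>\beta_c$, the Gibbs mass is carried by near-maximizers. So the growth rate of $Z^{(2)}_n(A_\delta)$, with $A_\delta$ the set of types far from product, is at most $2\beta\gamma_{\max}(h)-c(\delta)$. On an event of probability $1-e^{-cn}$, Gaussian concentration gives the lower bound $Z_{\beta,h,n}^2\ge e^{2n\beta\gamma_{\max}-o(n)}$. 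The ratio is therefore exponentially small. Combining this with Theorem \ref{thm:theorem1} applied to each marginal ($p_j$ has mean near $m^*$), we get that on $\{q_n\le\varepsilon\}$ the type is near $\mathrm{Bern}(m^*)^{\otimes2}$, hence $r_n\approx(m^*)^2$.

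The main obstacle is making the subadditivity gap effective at low temperature. Near-maximizing pairs in distinct subtrees must be shown not to be able to correlate their spin sequences, yet the count of pairs of a given joint type is only an entropy bound while energies are extremal, not averaged. I would try a truncated first moment. Restrict to paths whose partial energies stay below the linear barrier $\gamma_{\max} t$, which holds with high probability by Theorem \ref{thm:theoremA} applied to all subtrees. For each branch, optimize the Legendre-type functional $H(p_j)$ plus energy under the barrier. Then show that replacing $H(p_1)+H(p_2)$ by $H(p)$ lowers the exponent by the mutual information $I(p)>0$, uniformly over the finitely many (polynomially many) types.
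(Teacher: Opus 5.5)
\textbf{Verdict: genuine gap.} Your outer architecture is the paper's: the split by $q_n$, Theorem \ref{thm:theoremC} for the middle region and for the weights $\beta_c/\beta$ and $1-\beta_c/\beta$, the deterministic bound $r_n\ge1-2\varepsilon$ on $\{q_n\ge1-\varepsilon\}$, joint types of the tails beyond $\sigma\wedge\sigma'$, subadditivity of entropy, and comparison with the Gaussian-concentration lower bound on $Z_{\beta,h,n}^2$. The gap is in the one step that carries the content: bounding the two-replica partition function over non-product types.

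\textbf{Why your mechanism gives no penalty.} You control the energy of each branch through its marginal (the one-replica cap $\bar a(m_j)$, or Theorem \ref{thm:theoremA}) and let the count $\ee^{NS(\mathbf q)}$ of pairs of a given type supply the penalty. At $\beta>\beta_c$ this produces no gap. In the frozen phase the restricted free energy $\beta(\bar a(m)+hm)$ has no additive entropy term; entropy enters only through the cap on achievable energy levels. Near the top level, the marginal first-moment bounds already leave only $\ee^{O(\epsilon)N}$ pairs. So the bound $\ee^{NS(\mathbf q)}$ never binds, and a strongly correlated type with marginals $(m^*,m^*)$ still gets exponent $2\beta\gamma_{\max}(h)$. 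For the same reason, "optimize each branch, then replace $s(m_1)+s(m_2)$ by $S(\mathbf q)$, losing $I(\mathbf q)$" is not a derivation. Theorem \ref{thm:theoremA} cannot be applied to magnetization-restricted sets either: these are not BRWs, and an a.s.\ asymptotic statement gives no uniformity over polynomially many types and all $k\le\varepsilon n$.

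\textbf{The missing idea: a joint first moment.} You need a first moment over the pair that thresholds both energies at once, as in Lemma \ref{lem:firstmoment}. For each fixed pair with $|\sigma\wedge\sigma'|=k$, the tail sums $\widetilde X_k(\sigma)$ and $\widetilde X_k(\sigma')$ are independent $\mathcal N(0,N)$, so
\[
\E\bigl[\#\{\text{pairs of type }\mathbf q:\ \widetilde X_k(\sigma)\ge a_1N,\ \widetilde X_k(\sigma')\ge a_2N\}\bigr]\le 2^{k+1}\ee^{N(S(\mathbf q)-(a_1^2+a_2^2)/2)} .
\]
With high probability, uniformly over types and a grid of levels, a pair can then only carry $a_1^2+a_2^2\le 2S(\mathbf q)+O(\delta)$. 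The energy is capped by the \emph{joint} entropy, and this is exactly where subadditivity bites.

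Optimizing over that disk gives the rate $\Xi_{\beta,h}(\mathbf q)=\beta h(m_1+m_2)+2g_\beta(S(\mathbf q)/2)$. Subadditivity, concavity of $s$ and strict monotonicity of $g_\beta$ then give $\Xi_{\beta,h}(\mathbf q)\le2\Phi_h(\beta,\bar m)\le2f(\beta,h)$, with equality only at $\mathbf q^*$. The penalty is $2[g_\beta((s(m_1)+s(m_2))/2)-g_\beta(S(\mathbf q)/2)]$, not $I(\mathbf q)$.

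Uniqueness of the maximizer controls the marginals and the correlation simultaneously, so Theorem \ref{thm:theorem1} is not needed. Your route of combining it with a ``far from product'' gap would also work, but only once this rate function is in place.

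\textbf{The head.} The first $k+1\le\varepsilon n+1$ generations need a uniform bound with linear slack, e.g.\ $H_j(\sigma,h)\le\Theta_0 j+\delta n$ for all $j\le n$ and all $\sigma$, which follows from a union bound (Lemma \ref{lem:head}). A barrier $\gamma_{\max}t$ without slack fails with probability bounded away from zero already at $t=1$, and it does not follow from Theorem \ref{thm:theoremA}.
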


\medskip


\begin{rem}[Two overlaps, and how they complement each other]
\label{rem:two-overlaps}
Theorem \ref{thm:theorem2} calls for two comments.

\medskip

{\it (a) Consistency with Arguin--Kistler and with Bovier--Klimovsky.} The overlap II
considered here is exactly the Ising overlap studied by Arguin and Kistler  \cite[Corollary 4]{arguinkistler14}
for the REM in a random magnetic field, and its limiting law has the same qualitative shape ---
an atom at $1$, reflecting identical configurations, and an atom at a strictly positive value
below $1$, reflecting the residual alignment of two {\it independent} configurations under the
field. The  {\it genealogical} overlap I of Theorem \ref{thm:theoremC}, on the other hand, has no counterpart in the REM
(all configurations being equidistant there). It is specific to the correlated, tree-indexed
setting of the BRW and of the GREM, and is closer in spirit to the magnetization-profile results
of Bovier and Klimovsky \cite{bovierklimovsky2008}, which concern the GREM rather than the overlap itself. Theorem \ref{thm:theorem2}
connects these two pictures : it transfers the REM-type overlap computation
of \cite{arguinkistler14} into the correlated setting, using the genealogical structure available there but
absent from the REM.

\medskip

{\it (b) Complementing Theorem \ref{thm:theoremC} of Mallein.} Theorem \ref{thm:theoremC} only tells us that two independently sampled
configurations are, asymptotically, either genealogically identical (with probability
$1-\beta_c(h)/\beta$) or genealogically unrelated from the root onward (with probability
$\beta_c(h)/\beta$). But {\it it says nothing about how correlated their spins actually are} in the second
case. One might expect that two genealogically unrelated configurations behave, at the
level of the spins, as two independent uniform points of $\{-1,1\}^n$, so that
$r_n(\sigma,\sigma')\to0$. Theorem \ref{thm:theorem2} rules this out : two genealogically unrelated
configurations satisfy $r_n(\sigma,\sigma')\to (m^*(h))^2>0$, as soon as
$h>0$ --- see Remark \ref{rem:new2Dtech} below for why this cannot be deduced
directly from Theorem \ref{thm:theorem1}. In this sense, Theorem \ref{thm:theorem2}
both completes Mallein's overlap I result with a quantitative, spin-level statement, and gives
the exact non-trivial value --- namely $(m^*(h))^2=\tanh^2(\beta_c(h)h)$.
\end{rem}

\medskip

\begin{rem}[A new two-replica technique]
\label{rem:new2Dtech}
The proof of Theorem \ref{thm:theorem2}, given in Section \ref{sec:overlap}, is {\it not} a direct consequence of
Theorem \ref{thm:theorem1} : as Remark \ref{rem:counter-ex} shows by an elementary counterexample, concentration of each
replica's magnetization does not determine their joint correlation. We instead develop
a self-contained two-replica large-deviation technique, extending the one-replica
magnetization-slicing strategy of Section \ref{sec:magnet} to the {\it joint empirical type} of a pair of
configurations, via a two-dimensional rate function $\Xi_{\beta,h}$ built from a simple
polar-coordinate symmetrization identity $g_\beta^{(2)}(S) = 2g_\beta(S/2)$, see Equation
\eqref{eq:g2-identity}. 

Each ingredient is classical, and we make no claim to the contrary.
The counting of pairs by their joint empirical type, together with the
polynomial bound on the number of types that makes the union bound free, is
the {\it method of types} of Shannon theory (see Csisz\'{a}r and K\"orner \cite{Csiszar2011},
 see also \cite[Chapter 11]{CoverThomas06}). That a non-product joint type is
exponentially penalized, and that the optimal type under linear constraints
is the measure of maximal entropy compatible with them --- here the product of two
$m^*$-magnetized marginals --- is the {\it Gibbs conditioning principle},
or conditional limit theorem, of Csisz\'{a}r \cite{Csiszar84}. Finally, restricted first-moment estimates of this kind go back,
for branching random walks, to Biggins \cite{biggins77a}, and, for spin glasses, to the
microcanonical computation of Derrida \cite{Derrida1981} for the REM, of which the analyses
of Bovier and Klimovsky \cite{bovierklimovsky2008} and Arguin and Kistler \cite{arguinkistler14} under a field are the
direct descendants.

What we believe to be new is not any one of these ingredients but their
combination in the correlated, tree-indexed setting, together with the
identity in Equation \eqref{eq:g2-identity}. Three points are specific to the present model.
First, the joint type is carried by the part of the tree {\it beyond the
most recent common ancestor}, which forces a separate uniform control of the
head (Lemma \ref{lem:head}) that has no counterpart in the REM. Second, the
polar-coordinate identity $g^{(2)}_\beta(S)=2g_\beta(S/2)$ reduces the
two-replica variational problem to the very same one-dimensional function
$g_\beta$ already governing Theorem \ref{thm:theorem1}, which is what keeps Proposition \ref{prop:variational} 
short. Third, the outcome is matched against the genealogical decomposition
of Mallein \cite{Mallein2018}, which is unavailable in the REM. By contrast, the overlap results previously
available for the field-free BRW/GREM/BBM rely on the perturbative method of Bovier
and Kurkova \cite{bovierkurkova2004-1,bovierkurkova2004-2} or on convergence of the extremal process, while the overlap under {\it random} magnetic field
result of Arguin and Kistler  \cite[Corollary 4] {arguinkistler14} for the REM crucially exploits the
randomness of the field together with the absence of a correlation structure in the
REM --- neither of which is available here.
\end{rem}

\section{Proof of Theorem \ref{thm:theorem1}}
\label{sec:magnet}

\subsection{The two-dimensional encoding}
\label{subsec:2D-encoding}

Let us first introduce the {\it binary entropy} defined by
\begin{equation}
\label{eq:binary-entropy}
s(m):=-\left(\frac{1+m}{2}\right) \, \log\left(\frac{1+m}{2}\right)-\left(\frac{1-m}{2}\right)\, \log\left(\frac{1-m}{2}\right),
\qquad \forall \, m\in[-1,1],
\end{equation}
with the convention $0\log 0=0$, so that $s$ is continuous on $[-1,1]$,
$s(\pm1)=0$, $s(0)=\log2$, $s$ is strictly concave on $(-1,1)$, strictly
decreasing on $[0,1]$ (see Figure \ref{fig:binary-entropy} below), and
\begin{equation}
\label{eq:s-prime}
s'(m)=-\arctanh(m),\qquad \forall \, m \in(-1,1).
\end{equation}

\begin{figure}[h]
\centering
\begin{tikzpicture}[>=stealth,scale=1]

  \draw[->] (-4.8,0) -- (4.8,0) node[right] {$m$};
  \draw[->] (0,-0.3) -- (0,3.3) node[right] {$s(m)$};

  \foreach \x in {-4,4}
    \draw[gray!25] (\x,-0.15) -- (\x,3.0);

 \foreach \x/\xlab in {-4/-1,4/1} {
    \draw (\x,0.06) -- (\x,-0.06);
    \node[below] at (\x,-0.06) {\small $\xlab$};
  }
  \node[below] at (0,-0.06) {\small $0$};

  \draw[dashed,gray] (0,{4*ln(2)}) -- (-4.8,{4*ln(2)});
  \node[left] at (-4.85,{4*ln(2)}) {\small $\log 2$};
  \draw[dashed,gray] (0,0) -- (0,{4*ln(2)});

  \draw[red,thick,smooth,samples=100,domain=-0.99:0.99,variable=\t]
    plot ({4*\t},{4*( -((1+\t)/2)*ln((1+\t)/2) - ((1-\t)/2)*ln((1-\t)/2) )});

\end{tikzpicture}
\caption{The  {\it binary entropy} $s(m)$ for $ m \in [-1,1]$.}
\label{fig:binary-entropy}
\end{figure}

A key observation is to consider the {\it two-dimensional} BRW obtained by recording jointly the energy and the magnetization : 

\[
{\bf V}_n(\sigma):=\left(X_n(\sigma),\ \sum_{i=1}^n\sigma_i\right)\in\R^2, \qquad \forall \sigma\in\Sigma_n.
\]

Then $({\bf V}_n(\sigma))_{\sigma\in\cT}$ is a branching random walk on $\R^2$ :
each particle has exactly two children, whose displacements relative to their
parent are the two {\it independent} but {\it non-identically distributed}
random vectors $(\mathcal N(0,1),+1)$ and $(\mathcal N(0,1),-1)$. In other words the
reproduction point process is
$\delta_{(\mathcal N(0,1),+1)}+\delta_{(\mathcal N(0,1),-1)}$. Its
log-Laplace transform is

\begin{equation}
\label{eq:Lambda}
\Lambda(t,\theta):=\log\E\left[\sum_{|u|=1}\ee^{\langle(t,\theta),{\bf V}_1(u)\rangle}\right]
=\log\left(\ee^{t^2/2+\theta}+\ee^{t^2/2-\theta}\right)
=\frac{t^2}{2}+\log2+\log(\cosh\theta) ,
\end{equation}
which is finite for every $(t,\theta)\in\R^2$. Observe two important relations, which will be
used repeatedly :
\begin{equation}
\label{eq:consist}
H_n(\sigma,h)=\left\langle (1,h),{\bf V}_n(\sigma)\right\rangle \qquad\text{and}\qquad \psi_h(t)=\Lambda(t,ht).
\end{equation}
We have the following large-deviations result.

\medskip

 \begin{lem}[Rate function]
 \label{lem:rate}
Let $\Lambda^*(a,m):=\sup_{(t,\theta)\in\R^2}\{ta+\theta m-\Lambda(t,\theta)\}$
be the Legendre transform of the function $\Lambda$ defined by Equation \eqref{eq:Lambda}. Then, for all $a\in\R$ and $m\in(-1,1)$,
\begin{equation}
\label{eq:rate}
\Lambda^*(a,m)=\frac{a^2}{2}-s(m),
\end{equation}
where $s$ is the binary entropy defined in Equation \eqref{eq:binary-entropy}. The supremum is attained at $(t,\theta)=(a,\arctanh m)$.
\end{lem}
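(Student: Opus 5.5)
The supremum defining $\Lambda^*(a,m)$ separates in the two variables, and we will exploit this directly. By Equation \eqref{eq:Lambda},
\[
ta+\theta m-\Lambda(t,\theta)=\Bigl(ta-\frac{t^2}{2}\Bigr)+\bigl(\theta m-\log\cosh\theta\bigr)-\log 2 .
\]
Hence
\[
\Lambda^*(a,m)=\sup_{t\in\R}\Bigl(ta-\frac{t^2}{2}\Bigr)+\sup_{\theta\in\R}\bigl(\theta m-\log\cosh\theta\bigr)-\log2 ,
\]
and it is enough to compute two one-dimensional Legendre transforms.

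\emph{Gaussian part.} The map $t\mapsto ta-t^2/2$ is strictly concave. Its derivative $a-t$ vanishes only at $t=a$, so the supremum is attained there and equals $a^2/2$.

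\emph{Spin part.} The map $\theta\mapsto \theta m-\log\cosh\theta$ is strictly concave, since its second derivative is $-(1-\tanh^2\theta)<0$. Its derivative $m-\tanh\theta$ vanishes exactly when $\tanh\theta=m$. Because $m\in(-1,1)$ lies in the range of $\tanh$, this gives the unique critical point $\theta=\arctanh m$, which is therefore the global maximizer. (For $|m|\ge1$ there is no critical point, and this is why the statement is restricted to the open interval.)

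To evaluate the maximum, set $\theta_m:=\arctanh m=\tfrac12\log\frac{1+m}{1-m}$. The identity $\cosh\theta=(1-\tanh^2\theta)^{-1/2}$ gives $\log\cosh\theta_m=-\tfrac12\log(1-m^2)$. Therefore
\[
\theta_m m-\log\cosh\theta_m=\frac m2\log\frac{1+m}{1-m}+\frac12\log(1+m)+\frac12\log(1-m)=\frac{1+m}{2}\log(1+m)+\frac{1-m}{2}\log(1-m).
\]
Writing $\log(1\pm m)=\log\frac{1\pm m}{2}+\log 2$, the right-hand side becomes $-s(m)+\log2$, by Equation \eqref{eq:binary-entropy}.

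Adding the two parts and subtracting $\log2$ gives $\Lambda^*(a,m)=a^2/2-s(m)$, with the supremum attained at $(t,\theta)=(a,\arctanh m)$, as claimed. There is no real obstacle here. The only steps needing care are the algebraic simplification of $\log\cosh(\arctanh m)$ and the $\log 2$ bookkeeping that matches the binary entropy. Alternatively, one can verify the result by differentiating, using $s'(m)=-\arctanh m$ from \eqref{eq:s-prime} together with the value at $m=0$.
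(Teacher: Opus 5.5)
Your proposal is correct and follows essentially the same route as the paper's proof: you decouple the supremum into the Gaussian part (maximized at $t=a$) and the spin part (maximized at $\theta=\arctanh m$). You then evaluate the latter via $\cosh(\arctanh m)=(1-m^2)^{-1/2}$, with the same $\log 2$ bookkeeping to recover $\log 2-s(m)$.
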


\medskip

\begin{proof}
By Equation \eqref{eq:Lambda}, the variables decouple :
\[
\Lambda^*(a,m)=\underbrace{\sup_{t\in\R}\left\{ta-\frac{t^2}{2}\right\}}_{=a^2/2 \; \mathrm{for } \; t=a}
+\underbrace{\sup_{\theta\in\R}\left\{\theta m-\log(\cosh\theta)\right\}}_{=:\ \Gamma(m)}-\log 2 .
\]
For $|m|<1$ the function $\theta\mapsto\theta m-\log(\cosh\theta)$ is strictly
concave on $\R$, tends to $-\infty$ when $\theta \to \pm \infty,$ with derivative $m-\tanh\theta$ : the supremum is attained at
$\theta=\arctanh m$, hence using the relations $\arctanh m=\frac{1}{2}\log \left(\frac{1+m}{1-m}\right)$ and $\cosh(\arctanh m)=(1-m^2)^{-1/2}$ yields
\begin{eqnarray*}
\Gamma(m)&=&m\, \arctanh m- \log(\cosh(\arctanh m)) = \frac{m}{2}\log \left(\frac{1+m}{1-m}\right) + \frac{1}{2}\log(1-m^2)
\\
&=&\frac{1+m}{2}\log(1+m)+\frac{1-m}{2}\log(1-m) = \log2-s(m).
\end{eqnarray*}
Adding the three terms gives the conclusion.
\end{proof}

Rewriting the last equation :
\[
m \arctanh m- \log(\cosh(\arctanh m)) =  \log2-s(m),
 \]
with $m = \tanh x$, yields
\begin{equation}
\label{eq:stanh}
s(\tanh x)=\log 2+\log(\cosh x)-x\tanh x,\qquad \forall \, x\in\R,
\end{equation}
which will be convenient in Subsection \ref{subsec:var_prob}.

\medskip

Let us now introduce
\[
M_n:=\left\{-1+\frac{2k}{n}:0\le k\le n\right\}, \qquad \forall n \ge 1,
\]
the {\it admissible} values of the magnetization $y_n(\sigma)$, 
\[
\Sigma_n(m):=\{\sigma\in\Sigma_n:y_n(\sigma)=m\},  \qquad \forall n \ge 1, \, \forall \, m \in M_n,
\]
the set of configurations with magnetization $m$ and, for all $\beta,h>0,$
\[
Z_{\beta,h,n}(m):=\sum_{\sigma\in\Sigma_n(m)}\ee^{\beta H_n(\sigma,h)},  \qquad \forall n \ge 1, \, \forall \, m \in M_n,
\]
 the {\it restricted} partition function. The key point is to observe that
\[
\mathcal G_{\beta,h,n}\{|y_n(\sigma)-m^*|>\eta\}
=\frac{\sum_{m\in M_n,\,|m-m^*|>\eta}Z_{\beta,h,n}(m)}{Z_{\beta,h,n}},
\]
and that $\#M_n=n+1$ grows only polynomially. Therefore it suffices to combine a good {\it upper} bound on $\frac1n\log Z_{\beta,h,n}(m)$ valid {\it simultaneously for all} $m\in M_n$ and a matching  {\it lower} bound on $\frac1n\log Z_{\beta,h,n}$.

\subsection{A uniform Chernoff bound for restricted high points}

For $n \geq 1$, $m\in M_n$ and $a\in\R$, define the number of high points with
{\it prescribed magnetization} :
\[
N_n(a,m):=\#\left\{\sigma\in\Sigma_n(m)\ :\ X_n(\sigma)\ge an\right\}.
\]

\medskip

\begin{lem}[First moment]
\label{lem:firstmom}
For every $n\ge1$, every $m\in M_n$ and every $a\ge0$,
\begin{equation*}
\E\left[N_n(a,m)\right]\le
\ee^{-n\Lambda^*(a,m)}= \exp\left\{n\left(s(m)-\frac{a^2}{2}\right)\right\}.
\end{equation*}
\end{lem}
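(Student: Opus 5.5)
The bound follows from linearity of expectation together with an exact count of $\Sigma_n(m)$ and a standard Gaussian tail estimate. I will not go through the general two-dimensional many-to-one / exponential tilting argument. Since $X_n(\sigma)\sim\mathcal N(0,n)$ for every $\sigma\in\Sigma_n$, linearity of expectation gives
\[
\E\left[N_n(a,m)\right]=\#\Sigma_n(m)\,\P\left(\mathcal N(0,n)\ge an\right).
\]
The two factors are bounded separately.

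\textbf{Gaussian factor.} Use the Chernoff bound $\P(\mathcal N(0,n)\ge an)\le \inf_{t\ge0}\ee^{-tan+nt^2/2}$. For $a\ge0$ the optimum is at $t=a$, which gives $\ee^{-na^2/2}$. This is where the hypothesis $a\ge0$ is used: for $a<0$ the optimizer $t=a$ would be negative and the bound would fail.

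\textbf{Combinatorial factor.} Write $m=-1+2k/n$, so the configurations in $\Sigma_n(m)$ have exactly $k=n(1+m)/2$ spins equal to $+1$. Hence $\#\Sigma_n(m)=\binom{n}{k}$. Let $p=k/n=(1+m)/2$. The binomial theorem gives
\[
1=\left(p+(1-p)\right)^n\ge\binom nk p^k(1-p)^{n-k}.
\]
Rearranging,
\[
\binom nk\le p^{-k}(1-p)^{-(n-k)}=\exp\{n s(m)\},
\]
which is exactly the binary entropy \eqref{eq:binary-entropy}, with the convention $0\log0=0$ covering $m=\pm1$.

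Multiplying the two factors gives $\E[N_n(a,m)]\le\exp\{n(s(m)-a^2/2)\}$. For $m\in(-1,1)$ this equals $\ee^{-n\Lambda^*(a,m)}$ by Lemma \ref{lem:rate}.

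At the endpoints $m=\pm1$ the identity $\Lambda^*(a,m)=a^2/2-s(m)$ has to be checked directly. As $\theta\to\pm\infty$, $\sup_\theta\{\theta m-\log\cosh\theta\}=\log 2$, so $\Lambda^*(a,\pm1)=a^2/2$. This agrees with $s(\pm1)=0$, so the formula holds there too.

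Alternatively, the same bound follows from a single Chernoff bound with the tilt $(t,\theta)=(a,\arctanh m)$:
\[
\E[N_n(a,m)]\le \ee^{-n(ta+\theta m)}\,\E\Big[\sum_{|\sigma|=n}\ee^{\langle(t,\theta),{\bf V}_n(\sigma)\rangle}\Big]=\ee^{-n(ta+\theta m-\Lambda(t,\theta))}.
\]
Here $t\ge0$ is needed, which again holds because $a\ge0$; the indicator $\mathbbm 1_{\{\sum\sigma_i=nm\}}$ is dominated by $\ee^{\theta(\sum\sigma_i-nm)}$ for any sign of $\theta$, since equality makes the exponent vanish. Both routes are elementary. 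The only delicate points are the sign condition $a\ge0$ and the endpoint cases $m=\pm1$.
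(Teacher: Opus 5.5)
Your argument is correct, and your main route differs from the paper's. The paper's proof is essentially your ``alternative'' paragraph. For $m\in(-1,1)$ it dominates $N_n(a,m)$ by $\sum_{\sigma\in\Sigma_n}\ee^{t(X_n(\sigma)-an)+\theta(\sum_i\sigma_i-mn)}$, computes the expectation with the many-to-one Lemma~\ref{lem:m21}, and inserts the optimizer $(t,\theta)=(a,\arctanh m)$ from Lemma~\ref{lem:rate}; the layers $m=\pm1$ are handled separately by a plain Gaussian tail bound.

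You instead use that every $X_n(\sigma)$ has the same law $\mathcal N(0,n)$ whatever the spins. The first moment therefore factorizes exactly as $\#\Sigma_n(m)\,\P(\mathcal N(0,n)\ge an)$, and you bound the two factors by the binomial-theorem entropy estimate and the one-dimensional Chernoff bound. This avoids the many-to-one lemma and Lemma~\ref{lem:rate} entirely, and treats all $m\in M_n$, including $\pm1$, in one stroke. It is also the one-replica version of how the paper itself proves the two-replica bound of Lemma~\ref{lem:firstmoment} (multinomial entropy count times Gaussian tails).

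The paper's route, for its part, produces the exponent directly as the Legendre transform $\Lambda^*$ of the two-dimensional BRW, in the general-BRW language the paper is advocating. The price is the extra machinery and the case split at $m=\pm1$.

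Your direct check that $\Lambda^*(a,\pm1)=a^2/2$ also settles a small point the paper leaves implicit. There the supremum over $\theta$ is approached but not attained, and Lemma~\ref{lem:rate} is stated only for $m\in(-1,1)$, while the lemma writes $\ee^{-n\Lambda^*(a,m)}$ for every $m\in M_n$.
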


\medskip

\begin{proof}
We first treat the boundary cases $m=\pm1$, for which the
optimizer $\theta=\arctanh(m)$ appearing in Lemma \ref{lem:rate} is not defined, and Lemma \ref{lem:rate} covers only
$m\in(-1,1)$. For $m=1$ (resp. $m=-1$), the layer $\Sigma_n(m)$ reduces to the single vertex
$\sigma=(1,\dots,1)$ (resp.\ $\sigma=(-1,\dots,-1)$), so that $N_n(a,m)\in\{0,1\}$ almost surely  and,
since $X_n(\sigma)\sim\mathcal N(0,n)$, the classical Gaussian tail bound gives, for
every $a\ge0$,
\[
\mathbb E[N_n(a,\pm1)]=\mathbb P(X_n(\sigma)\ge an)\le e^{-na^2/2}=\exp\left\{n\left(s(\pm1)-\frac{a^2}{2}\right)\right\},
\]
using $s(\pm1)=0$. This is exactly the upper bound claimed in Lemma \ref{lem:firstmom}. We may therefore assume
from now on that $m\in(-1,1)$.
For $t\ge0$ and $\theta\in\R$, one has

\begin{eqnarray*}
N_n(a,m) & \le &  \sum_{\sigma\in\Sigma_n(m)}
\ee^{t(X_n(\sigma)-an)}=  \sum_{\sigma\in\Sigma_n(m)}
\ee^{t(X_n(\sigma)-an)+\theta(\sum_i\sigma_i-mn)}  \\
 & \le &  \sum_{\sigma\in\Sigma_n}
\ee^{t(X_n(\sigma)-an)+\theta(\sum_i\sigma_i-mn)}.
\end{eqnarray*}
 Taking expectations and using the many-to-one lemma (see
Lemma  \ref{lem:m21} in the Appendix) implies
 \[
\E[N_n(a,m)]\le \ee^{-n(ta+\theta m-\Lambda(t,\theta))}.
\]
 Optimizing over
$(t,\theta)$, which by Lemma  \ref{lem:rate} is legitimate with the admissible
choice $t=a\ge0$, $\theta=\arctanh m$, concludes the proof. 
\end{proof}

Set
\[
\bar a(m):=\sqrt{2s(m)}\in[0,\sqrt{2\log 2}], \qquad \forall \,  m\in[-1,1],
\]
which, by Lemma \ref{lem:firstmom}, is the largest energy density carried by the
layer $\Sigma_n(m)$ : for $a>\bar a(m)$ the first moment is exponentially
small, so the layer has no such particle with high probability. See Figure \ref{fig:abar} for the graph  of the function $\bar a(\cdot)$ on $[-1,1]$. We now
convert this into a bound on the restricted partition function. Define, for
$\beta>0$ and $m\in[-1,1]$,
\begin{equation}
\label{eq:Phi}
\Phi_h(\beta,m):=\beta hm+\max_{0\leq a \leq \bar a(m)}
\left\{s(m)-\frac{a^2}{2}+\beta a\right\}
=\beta hm+
\begin{cases}
s(m)+\dfrac{\beta^2}{2}, &  \text{if } \beta\le\bar a(m),\\[2mm]
\beta\,\bar a(m), & \text{if }  \beta\ge\bar a(m).
\end{cases}
\end{equation}
The second equality follows from an elementary computation. Note
that $\Phi_h(\beta,\cdot)$ is continuous on $[-1,1]$, including
at $m=\pm1$ where $s(\pm1)=\bar a(\pm1)=0$ and therefore $\Phi_h(\beta,\pm1)=\pm\beta h$, by definition.

\begin{figure}[h]
\centering
\begin{tikzpicture}[>=stealth,scale=1]

  \def\sx{4}
  \def\sy{3}

  \draw[->] (-4.8,0) -- (4.8,0) node[right] {$m$};
  \draw[->] (0,-0.3) -- (0,3.9) node[right] {$\bar a(m)$};

  \foreach \x in {-4,4}
    \draw[gray!25] (\x,-0.15) -- (\x,3.7);

  \foreach \x/\xlab in {-4/-1,4/1} {
    \draw (\x,0.06) -- (\x,-0.06);
    \node[below] at (\x,-0.06) {\small $\xlab$};
  }
  \node[below] at (0,-0.06) {\small $0$};

  \draw[dashed,gray] (0,{\sy*sqrt(2*ln(2))}) -- (-4.8,{\sy*sqrt(2*ln(2))});
  \node[left] at (-4.85,{\sy*sqrt(2*ln(2))}) {\small $\sqrt{2\log 2}$};

  \draw[red,thick,smooth,samples=100,domain=-0.999:0.999,variable=\t]
    plot ({\sx*\t},{\sy*sqrt(2*( -((1+\t)/2)*ln((1+\t)/2) - ((1-\t)/2)*ln((1-\t)/2) ))});

  \def\mstar{0.717}
  \def\abarstar{0.9025}
  \coordinate (Mstar) at ({\sx*\mstar},0);
  \coordinate (PtStar) at ({\sx*\mstar},{\sy*\abarstar});

  \draw[dashed,red!70!black] (Mstar) -- (PtStar);
  \draw[dashed,red!70!black] (0,{\sy*\abarstar}) -- (PtStar);

  \filldraw[red!70!black] (PtStar) circle (1.3pt);
  \node[left,red!70!black] at (0,{\sy*\abarstar}) {\small $\bar a(m^*(h))=\beta_c(h)$};

  \filldraw[red!70!black] (Mstar) circle (1.3pt);
  \node[below,red!70!black,yshift=-2mm] at (Mstar) {\small $m^*(h)$};

\end{tikzpicture}
\caption{The function $\bar a(m) = \sqrt{2\,s(m)}$, for $m \in [-1,1]$.}
\label{fig:abar}
\end{figure}

\medskip

\begin{prop}[Uniform upper bound on the restricted free energy]
\label{prop:upper}
Let $\beta>0$, $h>0$. There exists a constant $\kappa=\kappa(\beta)<\infty$
such that for every $\delta\in(0,1)$ there exists $n_0=n_0(\beta,\delta)\ge1$ such that
\[
\P\left(\exists\, m\in M_n:\ \frac1n\log Z_{\beta,h,n}(m)
>\Phi_h(\beta,m)+\kappa\delta\right)
\le \frac{\kappa n}{\delta}\,\ee^{-n\delta^2/2}, \qquad \forall \, n \ge n_0.
\]
In particular the bound
$\frac1n\log Z_{\beta,h,n}(m)\le\Phi_h(\beta,m)+\kappa\delta$ holds
simultaneously for all $m\in M_n$ with probability at least
$1-\kappa n\delta^{-1}\ee^{-n\delta^2/2}$.
\end{prop}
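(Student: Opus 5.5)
Since $H_n(\sigma,h)=X_n(\sigma)+hnm$ on $\Sigma_n(m)$, we have $Z_{\beta,h,n}(m)=\ee^{\beta hnm}\sum_{\sigma\in\Sigma_n(m)}\ee^{\beta X_n(\sigma)}$. So it suffices to bound $\frac1n\log\sum_{\sigma\in\Sigma_n(m)}\ee^{\beta X_n(\sigma)}$ by $\max_{0\le a\le\bar a(m)}\{s(m)-a^2/2+\beta a\}+\kappa\delta$, uniformly in $m\in M_n$. The plan is to discretize the energy axis with mesh $\delta$, control the number of points $N_n(a,m)$ in each energy bin using Lemma \ref{lem:firstmom} and Markov's inequality, and take a union bound over the bins and over $m\in M_n$.

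\textbf{Step 1: high energies.} Take a grid $a_k=k\delta$, $k\ge0$. First rule out configurations with $X_n(\sigma)$ above $\bar a(m)n+\delta n$. For $m\in M_n$, Lemma \ref{lem:firstmom} gives
\[
\P\bigl(N_n(\bar a(m)+\delta,m)\ge1\bigr)\le \exp\{n(s(m)-(\bar a(m)+\delta)^2/2)\}\le \ee^{-n\delta^2/2},
\]
because $(\bar a+\delta)^2/2\ge s(m)+\delta^2/2$. A union bound over the $n+1$ values of $m$ costs a factor $n+1$. The values of $m$ with $\bar a(m)$ small, including $m=\pm1$, are covered by the same bound.

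\textbf{Step 2: moderate energies.} For each grid level $a_k\in[0,\bar a(m)+\delta]$, use Markov's inequality:
\[
\P\bigl(N_n(a_k,m)>\exp\{n(s(m)-a_k^2/2+\delta^2/2)\}\bigr)\le\ee^{-n\delta^2/2}.
\]
There are at most $(\sqrt{2\log2}+\delta)/\delta+1\le c/\delta$ such levels, so the union bound over $k$ and $m$ gives total probability at most $(\kappa n/\delta)\,\ee^{-n\delta^2/2}$ for a suitable constant $\kappa$. On the complement event, split the sum by bins:
\[
\sum_{\sigma\in\Sigma_n(m)}\ee^{\beta X_n(\sigma)}\le \#\{\sigma\in\Sigma_n(m): X_n(\sigma)<0\}+\sum_k N_n(a_k,m)\,\ee^{\beta a_{k+1}n}.
\]
The first term is at most $\ee^{ns(m)}$ up to a polynomial factor, since $\#\Sigma_n(m)=\binom{n}{n(1+m)/2}\le \ee^{ns(m)}$. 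Each term of the second sum is at most $\exp\{n(s(m)-a_k^2/2+\beta a_k+\beta\delta+\delta^2/2)\}$. For $a_k\le\bar a(m)$, the exponent is at most the maximum defining $\Phi_h(\beta,m)-\beta hm$, plus $\beta\delta+\delta^2/2$.

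\textbf{Step 3: the top bin.} The one bin that needs care is $a_k\in(\bar a(m),\bar a(m)+\delta]$. There the concave function $a\mapsto s(m)-a^2/2+\beta a$ exceeds its value at $\bar a(m)$ by at most $\beta\delta$, so this bin is also within $O(\beta\delta)$ of the maximum. Summing at most $c/\delta+1$ bins adds $\frac1n\log(c/\delta+2)$, which is $\le\delta$ once $n\ge n_0(\beta,\delta)$. This yields the bound with $\kappa$ of order $2\beta+2$, depending only on $\beta$.

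\textbf{Main obstacle.} The difficulty is uniformity: the error must be independent of $m$, including near $m=\pm1$ where $s$ has infinite slope. The argument above avoids any continuity in $m$ because the grid in $a$ is fixed and every estimate is pointwise in $m\in M_n$. What remains is to check that the constants depend only on $\beta$, which holds since $\bar a(m)\le\sqrt{2\log2}$.
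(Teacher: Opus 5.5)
Your proposal is correct and follows the paper's proof essentially step by step. It uses the same grid $a_j=j\delta$ and the same good event built from Lemma \ref{lem:firstmom}, Markov's inequality and a union bound over $m\in M_n$ and the levels. The slicing is also the same: the negative-energy part is bounded by $\#\Sigma_n(m)\le \ee^{ns(m)}\le \ee^{n(\Phi_h(\beta,m)-\beta hm)}$ (take $a=0$ in the maximum), and the bin just above $\bar a(m)$ costs an extra $\beta\delta$. The only difference is cosmetic: your Markov threshold has slack $\delta^2/2$ instead of the paper's $\delta$, so you get $\ee^{-n\delta^2/2}$ directly rather than $\ee^{-n\delta}\le \ee^{-n\delta^2/2}$.
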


\medskip

\begin{proof}
Fix $\delta\in(0,1)$ and define $A:=\sqrt{2\log 2}$, so that
$\bar a(m)\le A$ for all $m$. Let $J:=\lceil (A+\delta)/\delta\rceil$ and
$a_j:=j\delta$ for all $0\le j\le J$, such that $a_J\ge A+\delta\ge\bar a(m)+\delta$
for every $m$. We also fix $\kappa=\kappa(\beta)$ large enough (but independent of  $\delta\in(0,1)$) such that
\begin{equation}
\label{eq:kappa}
J+1\le \frac{\kappa}{\delta}, \qquad (n+1)(J+2)\le \frac{\kappa n}{\delta}, \qquad 2(\beta+1) \le \kappa, \qquad \forall \delta\in(0,1), \, \forall \, n \ge 1.
\end{equation}

\medskip

{\it \underline{Step 1} : The good event.} Define
\[
\mathcal E_n:=\bigcap_{m\in M_n}\left(
\left\{N_n(\bar a(m)+\delta,m)=0\right\}\cap
\bigcap_{j=0}^{J}\left\{N_n(a_j,m)\le \ee^{n(s(m)-a_j^2/2+\delta)}\right\}\right), \qquad \forall \, n \ge 1.
\]
By Lemma  \ref{lem:firstmom} and Markov's inequality, for each $m \in M_n$ and each
$0 \leq j \leq J$,
\[
\P\left(N_n(a_j,m)\geq \ee^{n(s(m)-a_j^2/2+\delta)}\right)\le \ee^{-n\delta}, \qquad \forall \, n \ge 1,
\]
while, again by Lemma  \ref{lem:firstmom} with $a=\bar a(m)+\delta$,
\[
\P\left(N_n(\bar a(m)+\delta,m)\ge1\right)
\le \ee^{n\left(s(m)-\frac{(\bar a(m)+\delta)^2}{2}\right)}
=\ee^{-n\left(\delta\bar a(m)+\frac{\delta^2}{2}\right)}\le \ee^{-n\frac{\delta^2}{2}}, \qquad \forall \, n \ge 1,
\]
{\it uniformly in $m$}. This last bound is the reason for writing
$\delta^2/2$ in the statement, and it remains valid on the degenerate layers
$m=\pm1$ where $\bar a(m)=0$. A union bound over the
$(n+1)(J+2)\le \kappa n/\delta$ (by Equation \eqref{eq:kappa}) events gives
\begin{equation}
\label{eq:good_event}
\P(\mathcal E_n^c)\le \frac{\kappa n}{\delta} \, \ee^{-n\delta^2/2}, \qquad \forall \, n \ge 1.
\end{equation}

\medskip

{\it \underline{Step 2} : On  $\mathcal E_n$, slicing the layer.} Fix $n \geq 1$, $m\in M_n$ and let us 
decompose according to the values of $X_n(\sigma)$ :
\[
\ee^{-\beta n h m}Z_{\beta,h,n}(m)=\sum_{\sigma\in\Sigma_n(m)}\ee^{\beta X_n(\sigma)}
=\underbrace{\sum_{\sigma:\,X_n(\sigma)<0}}_{=:S_-}
+\sum_{j=0}^{J-1}\ \underbrace{\sum_{\sigma:\,a_jn\le X_n(\sigma)<a_{j+1}n}}_{=:S_j}
+\underbrace{\sum_{\sigma:\,X_n(\sigma)\ge a_Jn}}_{=:S_+},
\]
all sums being over $\sigma\in\Sigma_n(m)$. Make now the following observations.

\begin{itemize}
  \item[(a)] On the good event $\mathcal E_n$, we have 
  \[
S_+=0,
\]
since
$a_J\ge\bar a(m)+\delta$ and $N_n(\bar a(m)+\delta,m)=0$.

  \item[(b)] We have $S_-\le\#\Sigma_n(m)\le \ee^{ns(m)}$, by the entropy bound on multinomial
coefficients (see \cite[Theorem 11.1.3]{CoverThomas06}). Moreover
$\Phi_h(\beta,m)-\beta hm\ge s(m)$ is satisfied in both regimes of Equation \eqref{eq:Phi}
(indeed $s(m)+\beta^2/2\ge s(m)$ when $\beta\le\bar a(m)$ and $\beta\bar a(m)\ge\frac{\bar a(m)^2}{2}=s(m)$
when $\beta\ge\bar a(m)$). Hence, we get
  \[
S_-\le \ee^{n(\Phi_h(\beta,m)-\beta hm)}.
\]

  \item[(c)]  
On $\mathcal E_n$, and for all $0\le j\le J-1$,
\[
S_j\le \ee^{\beta a_{j+1}n}\,N_n(a_j,m)
\le \exp\left\{n\left(s(m)-\frac{a_j^2}{2}+\beta a_j+(\beta+1)\delta\right)\right\}.
\]
Moreover only the indices $j$ with $a_j\le\bar a(m)+\delta$ can contribute a
nonzero term (again because $N_n(\bar a(m)+\delta,m)=0$ on $\mathcal E_n$),
and, for such $j$, we have
\[
s(m)-\frac{a_j^2}{2}+\beta a_j
\le \max_{0\le a\le\bar a(m)+\delta}\left\{s(m)-\frac{a^2}{2}+\beta a\right\}
\le \max_{0\le a\le\bar a(m)}\left\{s(m)-\frac{a^2}{2}+\beta a\right\}+\beta\delta,
\]
the last inequality because, for $a\in[\bar a(m),\bar a(m)+\delta]$, one has
$s(m)-\frac{a^2}{2}+\beta a\le s(m)-\frac{\bar a(m)^2}{2}+\beta\bar a(m)+\beta\delta$ (by concavity of the parabola). Hence, by Equation \eqref{eq:Phi},
\[
S_j\le \exp\left\{n\left(\Phi_h(\beta,m)-\beta hm+(2\beta+1)\delta\right)\right\}, \qquad \forall \, 0\le j\le J-1.
\]
\end{itemize}

\medskip

{\it \underline{Step 3}.} Summing Observations (a)--(c) over the at most $J+1\le\kappa/\delta$  (by Equation \eqref{eq:kappa}) slices, we get, on $\mathcal E_n$,
\[
Z_{\beta,h,n}(m)\le \frac{\kappa}{\delta}\,
\exp\left\{n\left(\Phi_h(\beta,m)+(2\beta+1)\delta\right)\right\}, \qquad \forall \, n \ge 1, \; \forall \, m\in M_n.
\]
Pick $ n_0=n_0(\beta,\delta)$ large enough such that 
$\frac1n\log(\kappa/\delta)\le\delta$ for $n\ge n_0$. Then, on the good event $\mathcal E_n$,
$Z_{\beta,h,n}(m)\le
\exp\left\{n\left(\Phi_h(\beta,m)+2(\beta+1)\delta\right)\right\},$
for all $n \ge n_0$, all $m\in M_n$, which implies, by Equation \eqref{eq:kappa}, that, on $\mathcal E_n$,
\[
\frac1n\log Z_{\beta,h,n}(m)\le\Phi_h(\beta,m)+\kappa\delta, \qquad \forall \, n \ge n_0, \; \forall \, m\in M_n.
\]
This last result combined with Equation \eqref{eq:good_event} concludes the proof.
\end{proof}

\subsection{The variational problem}
\label{subsec:var_prob}

\medskip
\begin{prop}
\label{prop:var}
Let $h>0$ and recall $m^*=m^*(h)=\tanh(\beta_c h)$.
\begin{itemize}
\item[(i)] One has $\bar a(m^*)=\sqrt{2s(m^*)}=\beta_c$ or equivalently
$s(m^*)=\beta_c^2/2$. Moreover
$\psi_h'(\beta_c)=\beta_c+hm^*=\gamma_{\max}(h)$.
\item[(ii)] For every $\beta>0$, the function
$\phi(m):=\beta\left(\bar a(m)+hm\right)=\beta\left(\sqrt{2s(m)}+hm\right)$, see Figure \ref{fig:phi}, is
strictly concave on $(-1,1)$ and attains its maximum over $[-1,1]$ at the
unique point $m^*$. Moreover
$\phi(m^*)=\beta\left(\beta_c+hm^*\right)=\beta\gamma_{\max}(h)$.
\item[(iii)] Let $\beta>\beta_c$. Then $m\mapsto\Phi_h(\beta,m)$ attains its
maximum over $[-1,1]$ at the unique point $m^*$, and
\[
\max_{m\in[-1,1]}\Phi_h(\beta,m)=\Phi_h(\beta,m^*)=f(\beta,h)=\beta\gamma_{\max}(h).
\]
Consequently, for every $\eta\in(0,1)$,
\begin{equation}
\label{eq:gap}
c_1=c_1(\beta,h,\eta):=\Phi_h(\beta,m^*)-\sup_{m\in[-1,1],\,|m-m^*|>\eta}\Phi_h(\beta,m)>0 .
\end{equation}
\end{itemize}
\end{prop}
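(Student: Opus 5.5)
Part (i) is an algebraic rewriting of the critical equation \eqref{eq:beta_c} using the identity \eqref{eq:stanh}. Differentiating gives $\psi_h'(t)=t+h\tanh(ht)$. Hence $\beta_c\psi_h'(\beta_c)-\psi_h(\beta_c)=0$ reads
\[
\beta_c^2+\beta_c h\tanh(\beta_c h)-\frac{\beta_c^2}{2}-\log2-\log\cosh(\beta_c h)=0,
\]
that is, $\beta_c^2/2=\log2+\log\cosh(x)-x\tanh x$ with $x=\beta_c h$. By \eqref{eq:stanh} the right-hand side is exactly $s(\tanh x)=s(m^*)$. So $s(m^*)=\beta_c^2/2$ and $\bar a(m^*)=\beta_c$, since $\beta_c>0$. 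The identity $\psi_h'(\beta_c)=\beta_c+hm^*=\gamma_{\max}(h)$ is then immediate from the definitions in Theorem \ref{thm:theoremA}.

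For (ii), $\bar a=\sqrt{2s}$ is the composition of the concave increasing function $\sqrt{\cdot}$ with the strictly concave function $s$ on $(-1,1)$. It is therefore strictly concave there: strictness of $s$ propagates because $\sqrt{\cdot}$ is strictly increasing. Adding the linear term $hm$ preserves strict concavity, and $\phi$ is continuous on $[-1,1]$, so its maximizer is unique. To locate it I would solve $\phi'(m)=0$ on $(-1,1)$, using $s'=-\arctanh$ from \eqref{eq:s-prime}. The condition becomes $\arctanh(m)/\sqrt{2s(m)}=h$. At $m=m^*$ we have $\arctanh m^*=\beta_c h$ and $\sqrt{2s(m^*)}=\beta_c$, so the ratio equals $h$, and $m^*$ is a critical point. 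Strict concavity makes it the unique maximizer over $(-1,1)$. The endpoints $m=\pm1$ are excluded because $\phi'(m)\to\mp\infty$ there: $\arctanh$ blows up while $\sqrt{2s}\to0$. The value $\phi(m^*)=\beta(\beta_c+hm^*)=\beta\gamma_{\max}$ then follows from (i).

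For (iii) I would bound $\Phi_h$ by $\phi$, distinguishing the two cases of \eqref{eq:Phi}.
\begin{itemize}
\item If $\beta\ge\bar a(m)$, then $\Phi_h(\beta,m)=\phi(m)$.
\item If $\beta<\bar a(m)$, then $\Phi_h(\beta,m)=\beta hm+s(m)+\beta^2/2$. Since $s(m)=\bar a(m)^2/2$, we get $s(m)+\beta^2/2-\beta\bar a(m)=(\bar a(m)-\beta)^2/2>0$, so in this case $\Phi_h>\phi$.
\end{itemize}
This means the pointwise comparison goes the wrong way in the second case, and this is the main obstacle. The way around it is that the second case only occurs where $\bar a(m)>\beta>\beta_c=\bar a(m^*)$. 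The correct comparison is instead
\[
\Phi_h(\beta,m)=\beta hm+\max_{0\le a\le\bar a(m)}\Big\{s(m)-\frac{a^2}{2}+\beta a\Big\},
\]
to be controlled via Legendre duality. I would write $s(m)-a^2/2=-\Lambda^*(a,m)$ by Lemma \ref{lem:rate}. Then for any admissible $(a,m)$ with $a\le\bar a(m)$,
\[
\beta a+\beta hm-\Lambda^*(a,m)\le \beta\,(a+hm)\le\beta\gamma_{\max},
\]
where the first inequality uses $\Lambda^*\ge0$ on the region $a\le\bar a(m)$ (equivalently $a^2/2\le s(m)$). For the second inequality I need $a+hm\le\bar a(m)+hm\le\max\phi/\beta=\gamma_{\max}$, which is (ii). Equality forces $\Lambda^*=0$ and $m=m^*$, i.e.\ $a=\bar a(m^*)=\beta_c$.

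At $m^*$ we have $\beta>\beta_c=\bar a(m^*)$, so we are in the second regime of \eqref{eq:Phi} and $\Phi_h(\beta,m^*)=\phi(m^*)=\beta\gamma_{\max}=f(\beta,h)$ by Theorem \ref{thm:theoremB}. The maximum is therefore attained, and only at $m^*$.

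Finally, for the gap \eqref{eq:gap}: $\Phi_h(\beta,\cdot)$ is continuous on the compact set $\{|m-m^*|\ge\eta\}\cap[-1,1]$, and it is strictly below its maximum there by uniqueness of the maximizer. Its supremum over this set is therefore attained and strictly smaller, which gives $c_1>0$.
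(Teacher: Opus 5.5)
Parts (i) and (ii) are correct and follow the paper; your endpoint argument via $\phi'(m)\to\mp\infty$ is a valid substitute for the paper's monotonicity-plus-continuity argument.

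Part (iii) has a genuine error. On the region $0\le a\le\bar a(m)$ you have $a^2/2\le s(m)$, so $\Lambda^*(a,m)=a^2/2-s(m)\le 0$, not $\ge 0$. Hence $\beta(a+hm)-\Lambda^*(a,m)\ge\beta(a+hm)$, and your first inequality is reversed. Flipping signs does not rescue the chain. Maximizing $\beta(a+hm)-\Lambda^*(a,m)$ over $0\le a\le\bar a(m)$ is exactly $\Phi_h(\beta,m)$, so your chain asserts the pointwise bound $\Phi_h(\beta,m)\le\phi(m)$. You had just shown this bound fails whenever $\bar a(m)>\beta$; the duality rewriting only disguises it. A sanity check confirms the problem: your chain never uses $\beta>\beta_c$, so it would give $\Phi_h(\beta,m)\le\beta\gamma_{\max}(h)$ for every $\beta>0$. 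But $\Phi_h(\beta,0)=\log2+\beta^2/2$ for $\beta\le\sqrt{2\log2}$, which exceeds $\beta\gamma_{\max}(h)$ for small $\beta$. More generally, for $\beta<\beta_c$ one finds $\Phi_h(\beta,\tanh(\beta h))=\psi_h(\beta)>\beta\gamma_{\max}(h)$ by Lemma \ref{lem:Psi-h}(iii). The missing ingredient is a genuine use of $\beta>\beta_c$ on the set $\{m:\bar a(m)>\beta\}$.

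The paper supplies this by splitting $[-1,1]$ into two pieces. On $F_\beta=\{\bar a<\beta\}$ one has $\Phi_h=\phi$. On $A_\beta=\{\bar a\ge\beta\}=[-m_+(\beta),m_+(\beta)]$ one has $\Phi_h(\beta,m)=s(m)+\beta^2/2+\beta hm$, which is strictly concave with free maximizer $\tanh(\beta h)$. The hypothesis $\beta>\beta_c$, in the form $\Delta_h(\beta)>0$, gives $s(\tanh(\beta h))<\beta^2/2$, so $\tanh(\beta h)>m_+(\beta)$. The maximum over $A_\beta$ therefore sits at $m_+(\beta)$, where $\Phi_h=\phi$, and $\phi(m_+(\beta))<\phi(m^*)$ since $m_+(\beta)<m^*$.

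Alternatively, your duality idea can be repaired. Instead of invoking a sign of $\Lambda^*$, evaluate the Fenchel inequality at the dual point $(t,\theta)=(\beta_c,\beta_c h)$. For all $a\in\R$ and all $m\in[-1,1]$, endpoints included since $s(m)+\theta m\le\log(2\cosh\theta)$, this gives $s(m)-a^2/2\le\psi_h(\beta_c)-\beta_c(a+hm)=\beta_c\gamma_{\max}(h)-\beta_c(a+hm)$. Hence, for $0\le a\le\bar a(m)$,
\[
s(m)-\frac{a^2}{2}+\beta(a+hm)\le\beta_c\gamma_{\max}(h)+(\beta-\beta_c)(a+hm)\le\beta\gamma_{\max}(h),
\]
using $\beta>\beta_c$ and $a+hm\le\bar a(m)+hm\le\gamma_{\max}(h)$ from (ii). Equality forces $a+hm=\gamma_{\max}(h)$, hence $\phi(m)\ge\beta\gamma_{\max}(h)$, and (ii) then gives $m=m^*$. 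This version avoids both the case distinction and the auxiliary point $m_+(\beta)$.
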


\medskip

\begin{proof}
{\it \underline{Proof of (i)}.} Since $\psi_h'(t)=t+h\tanh(ht)$, Equation \eqref{eq:beta_c} reads
$\beta_c^2+\beta_c h\tanh(\beta_c h)=\frac{\beta_c^2}{2}+\log2+\log\cosh(\beta_c h)$,
i.e.
\begin{equation}
\label{eq:betac2}
\frac{\beta_c^2}{2}=\log2+\log\cosh(\beta_c h)-\beta_c h\tanh(\beta_c h).
\end{equation}
On the other hand, Equation \eqref{eq:stanh} with $x=\beta_c h$, for which
$\tanh x=m^*$, gives
$s(m^*)=\log2+\log\cosh(\beta_c h)-\beta_c h\tanh(\beta_c h)$. Comparing with
Equation \eqref{eq:betac2} yields $s(m^*)=\beta_c^2/2$, i.e.\ $\bar a(m^*)=\beta_c$.
The identity
$\psi_h'(\beta_c)=\beta_c+h\tanh(\beta_c h)=\gamma_{\max}(h)$ is immediate.

\medskip

\noindent  {\it \underline{Proof of (ii)}.} The function $s$ is strictly concave and nonnegative on $(-1,1)$ and
$x\mapsto\sqrt x$ is concave and strictly increasing on $[0,\infty)$, so
$\bar a=\sqrt{2s}$ is strictly concave on $(-1,1)$; adding the linear term
$hm$ and the factor $\beta>0$ preserves strict concavity. Therefore, the function $\phi$ is strictly concave, but also differentiable on $(-1,1)$ and a stationary point
of $\phi$ in $(-1,1)$ satisfies, by Equation \eqref{eq:s-prime},
\[
\frac{s'(m)}{\sqrt{2s(m)}}+h=0
\iff \arctanh(m)=h\sqrt{2s(m)} .
\]
By (i), $m^*$ is a solution :
$\arctanh(m^*)=\beta_c h=h\sqrt{2s(m^*)}$ and therefore a stationary point
of $\phi$  in $(-1,1)$.
By strict concavity, $m^*$ is the unique stationary
point and the unique global maximizer of $\phi$ on
$(-1,1)$. Since $\phi$ is then strictly decreasing on $[m^*,1)$ and strictly
increasing on $(-1,m^*]$, continuity gives
$\phi(\pm1)<\phi(m^*)$, so the maximum over $[-1,1]$ is attained only at
$m^*$. Finally $\phi(m^*)=\beta(\beta_c+hm^*)=\beta\gamma_{\max}(h)$ by (i) (see also Figure \ref{fig:phi}).

\medskip

\noindent  {\it \underline{Proof of (iii)}.}  Split $[-1,1]=A_\beta\cup F_\beta$ with
\[
A_\beta:=\{m:\bar a(m)\ge\beta\}=\{m:s(m)\ge\beta^2/2\},
\qquad F_\beta:=\{m:\bar a(m)<\beta\}.
\]
Note first that $m^*\in F_\beta$, because $\bar a(m^*)=\beta_c<\beta$
by (i). Since $\max_{m\in [-1,1]}s(m)=s(0)=\log 2$, one has $A_\beta=\emptyset$ if
and only if $\beta>\sqrt{2\log 2}$, and we treat the two cases separately.

\medskip
\textbf{Case 1 : if $\beta>\sqrt{2\log 2}$.} Then $\bar
a(m)=\sqrt{2s(m)}\le\sqrt{2\log 2}<\beta$ for every $m\in[-1,1]$, so
$A_\beta=\emptyset$ and $F_\beta=[-1,1]$. By Equation \eqref{eq:Phi}, $\Phi_h(\beta,\cdot)=\phi$
on all of $[-1,1]$, and (ii) gives directly that $\Phi_h(\beta,\cdot)$
attains its maximum $\beta\gamma_{\max}(h)$ over $[-1,1]$ at the unique point
$m^*$. No further argument is needed in this case.

\medskip

\textbf{Case 2 : if $\beta_c<\beta\le\sqrt{2\log 2}$} (a non-empty range,
since $h>0$ forces $m^*>0$, hence $s(m^*)<\log 2$ and $\beta_c<\sqrt{2\log
2}$ by (i)). Then $A_\beta\neq\emptyset$ and, since $s$ is even, strictly concave, maximal at $0$ and vanishing at $\pm1$,
$A_\beta=[-m_+(\beta),m_+(\beta)]$ is a compact interval, where $m_+(\beta)\in[0,1)$ is well defined by $s(m_+(\beta))=\beta^2/2$, existence and uniqueness
following from the fact that $s$ is continuous and strictly decreasing from
$\log 2$ to $0$ on $[0,1]$, together with $\beta^2/2\le\log 2$.

\medskip

\noindent  {\it \underline{On $F_\beta$} :} $\Phi_h(\beta,\cdot)=\phi$ there by Equation \eqref{eq:Phi}, so
by (ii) $\sup_{F_\beta}\Phi_h(\beta,\cdot)=\phi(m^*)=\beta\gamma_{\max}(h)$,
attained only at $m^*$.

\medskip

\noindent {\it \underline{On $A_\beta$} :} there
$\Phi_h(\beta,m)=\phi_{\mathrm{an}}(m):=s(m)+\frac{\beta^2}{2}+\beta hm$,
which is strictly concave on $(-1,1)$ with unique unconstrained maximizer
$\tanh(\beta h)$ (by Equation \eqref{eq:s-prime},  one has $s'(m)+\beta h=0\iff m=\tanh(\beta h)$).
We claim that 
\[
\tanh(\beta h)\notin A_\beta, \qquad \mathrm{and} \qquad  \tanh(\beta h)>m_+(\beta).
\]
 Indeed, by Equation \eqref{eq:stanh} with
$x=\beta h$,
\[
s\left(\tanh(\beta h)\right)-\frac{\beta^2}{2}
=\log2+\log\cosh(\beta h)-\beta h\tanh(\beta h)-\frac{\beta^2}{2}
=\psi_h(\beta)-\beta\psi_h'(\beta)<0,
\]
by Lemma \ref{lem:Psi-h} ($\beta>\beta_c$ here). Therefore
$s(\tanh(\beta h))<\beta^2/2=s(m_+(\beta))$, i.e.\ $\tanh(\beta h)\notin A_\beta$. 
As $s$ is strictly decreasing on
$[0,1]$ and both numbers are nonnegative, this gives
$\tanh(\beta h)>m_+(\beta)$. 

By strict
concavity, the maximum of $\phi_{\mathrm{an}}$ over the interval $A_\beta$ is
thus attained at the endpoint closest to $\tanh(\beta h)$, namely
$m_+(\beta)$ :
\[
\sup_{A_\beta}\Phi_h(\beta,\cdot)=\phi_{\mathrm{an}}\left(m_+(\beta)\right)
=\phi\left(m_+(\beta)\right),
\]
the last equality because $\bar a(m_+(\beta))=\beta$, so that the two
expressions in Equation \eqref{eq:Phi} agree there. Finally $\beta>\beta_c$ implies
$s(m_+(\beta))=\beta^2/2>\beta_c^2/2=s(m^*)$, hence $m_+(\beta)<m^*$ (again
both nonnegative and $s$ strictly decreasing on $[0,1]$), so
$m_+(\beta)\neq m^*$ and, by the uniqueness in (ii),
$\phi(m_+(\beta))<\phi(m^*)$.

Combining the two cases, $\Phi_h(\beta,\cdot)$ attains its maximum
$\beta\gamma_{\max}(h)$ only at $m^*$. That this equals $f(\beta,h)$ for
$\beta>\beta_c$ is the content of Theorem \ref{thm:theoremB} (and is here re-derived). Finally, for $\eta\in(0,1)$ the set $\{m\in[-1,1]:|m-m^*|\ge\eta\}$
is non-empty --- it contains $m=-1$, since $|-1-m^*|=1+m^*>1>\eta$ --- and
compact.
Equation \eqref{eq:gap} then follows from the continuity of
$\Phi_h(\beta,\cdot)$ on 
$\{m\in[-1,1]:|m-m^*|\ge\eta\}$ together with the uniqueness of the
maximizer.
\end{proof}

\begin{figure}[h]
\centering
\begin{tikzpicture}[>=stealth,scale=1]

  \def\sx{4}
  \def\sy{2.5}

  \draw[->] (-4.8,0) -- (4.8,0) node[right] {$m$};
  \draw[->] (0,-2.8) -- (0,4.5) node[right] {$\phi(m)$};

  \foreach \x in {-4,4}
    \draw[gray!25] (\x,-2.6) -- (\x,4.2);

  \foreach \x/\xlab in {-4/-1,4/1} {
    \draw (\x,0.06) -- (\x,-0.06);
    \node[below] at (\x,-0.06) {\small $\xlab$};
  }
  \node[below left] at (0,-0.06) {\small $0$};

  \draw[red,thick,smooth,samples=100,domain=-0.999:0.999,variable=\t]
    plot ({\sx*\t},{\sy*( sqrt(2*( -((1+\t)/2)*ln((1+\t)/2) - ((1-\t)/2)*ln((1-\t)/2) )) + \t )});

  \def\mstar{0.717}
  \def\phistar{1.6200}
  \coordinate (Mstar) at ({\sx*\mstar},0);
  \coordinate (PtMax) at ({\sx*\mstar},{\sy*\phistar});

  \draw[dashed,red!70!black] (Mstar) -- (PtMax);
  \draw[dashed,red!70!black] (0,{\sy*\phistar}) -- (PtMax);

  \filldraw[red!70!black] (PtMax) circle (1.3pt);
  \node[left,red!70!black] at (0,{\sy*\phistar}) {\small $\phi(m^*(h))=\beta\,\gamma_{\max}(h)$};

  \filldraw[red!70!black] (Mstar) circle (1.3pt);
  \node[below,red!70!black,yshift=-2mm] at (Mstar) {\small $m^*(h)$};

\end{tikzpicture}
\caption{The function $\phi(m) = \beta(\bar a(m) + hm)$, for $m \in [-1,1]$, illustrated for $\beta=h=1$. Unlike $\bar a(m)$, the curve is not symmetric : the linear term $hm$ tilts the maximum towards positive $m$.}
\label{fig:phi}
\end{figure}

\subsection{Lower bound on the total partition function}

\medskip

\begin{prop}
\label{prop:lower}
Let $\beta>0$, $h>0$ and $\delta>0$. There exists
$n_1=n_1(\beta,h,\delta)<\infty$ such that 
\[
\P\left(\frac1n\log Z_{\beta,h,n}\le f(\beta,h)-\delta\right)
\le \ee^{-\frac{n\delta^2}{8\beta^2}}, \qquad \forall \, n\ge n_1.
\]
\end{prop}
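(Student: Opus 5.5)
Our plan is to combine Gaussian concentration for $\log Z_{\beta,h,n}$ with the almost sure and $L^1$ convergence of Theorem \ref{thm:theoremB}.

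\emph{Lipschitz constant.} We view $\log Z_{\beta,h,n}$ as a function $F$ of the i.i.d.\ standard Gaussian vector $(U(v))_{1\le|v|\le n}$. For each $v$,
\[
\partial_{U(v)}F=\beta\,\cG_{\beta,h,n}\{\sigma\succeq v\}.
\]
Write $p_v:=\cG_{\beta,h,n}\{\sigma\succeq v\}$. At each level $i$ the weights satisfy $\sum_{|v|=i}p_v=1$, hence $\sum_{|v|=i}p_v^2\le 1$. Summing over the $n$ levels gives $|\nabla F|^2\le \beta^2 n$, so $F$ is $\beta\sqrt n$-Lipschitz. The Gaussian concentration inequality (Tsirelson--Ibragimov--Sudakov) then yields, for every $u>0$,
\[
\P\left(F-\E F\le -u\right)\le \ee^{-u^2/(2\beta^2 n)}.
\]

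\emph{Centering.} By Theorem \ref{thm:theoremB}, $\E[f_n(\beta,h)]\to f(\beta,h)$ (convergence in $L^1$). We choose $n_1$ such that $\E[f_n(\beta,h)]\ge f(\beta,h)-\delta/2$ for all $n\ge n_1$. On the event $\{f_n\le f-\delta\}$ we then have $F-\E F\le -n\delta/2$. Taking $u=n\delta/2$ in the concentration bound gives
\[
\P\left(f_n(\beta,h)\le f(\beta,h)-\delta\right)\le \exp\left\{-\frac{n^2\delta^2/4}{2\beta^2 n}\right\}=\ee^{-n\delta^2/(8\beta^2)},
\]
which is exactly the claimed bound.

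\emph{Main obstacle.} The only nontrivial input is the convergence of the expected free energy. It follows from the $L^1$ part of Theorem \ref{thm:theoremB}. Alternatively, one can get it from the a.s.\ convergence combined with uniform integrability, since concentration makes the variance of $f_n$ $O(1/n)$. The Lipschitz computation is routine, but one should check that it is uniform in $h$: the field enters only through the weights and not through the disorder, so the Lipschitz constant $\beta\sqrt n$ does not depend on $h$.
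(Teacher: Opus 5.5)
Your proposal is correct and follows essentially the same route as the paper: the same gradient computation through the level-wise partition $\{\sigma\succeq v\}_{|v|=i}$, giving Lipschitz constant $\beta\sqrt n$ for $\log Z_{\beta,h,n}$ (equivalently $\beta/\sqrt n$ for $f_n$), Gaussian concentration at deviation $n\delta/2$, and centering via the $L^1$ convergence of Theorem \ref{thm:theoremB}. The constant $\ee^{-n\delta^2/(8\beta^2)}$ comes out exactly as in the paper.
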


\medskip

\begin{proof}
This is standard computation. 
View $F_n:=\frac1n\log Z_{\beta,h,n}$ as a function of the Gaussian vector
$\mathbf U=(U(v))_{1\le|v|\le n}$. For a configuration $v$ with $|v|=k\le n$,
\[
\frac{\partial}{\partial U(v)}\log Z_{\beta,h,n}
=\beta\, \mathcal  G_{\beta,h,n}\left(\mathcal B(v)\right),
\qquad \mathcal B(v):=\{\sigma\in\Sigma_n:\sigma\succ v\},
\]
since $U(v)$ appears in $H_n(\sigma,h)$ exactly for the descendants of $v$.
For each fixed $k$, the sets $(\mathcal B(v))_{|v|=k}$ form a partition of
$\Sigma_n$, so that 
$\sum_{|v|=k}\mathcal  G_{\beta,h,n}(\mathcal B(v))^2\le
\sum_{|v|=k}\mathcal  G_{\beta,h,n}(\mathcal B(v))=1$. Hence
$\|\nabla\log Z_{\beta,h,n}\|_2^2\le\beta^2 n$  and $F_n$ is
$\frac{\beta}{\sqrt n}$-Lipschitz. The Gaussian concentration inequality, see  \cite[Theorem 2.2.4]{talagrand2003},
gives
\[
\P\left(F_n\le\E [ F_n]-\frac{\delta}{2}\right)
\le \exp\left\{-\frac{n\delta^2}{8\beta^2}\right\}.
\]
By Theorem \ref{thm:theoremB} (convergence in $L^1$), $\E[ F_n]\to f(\beta,h)$, when $n \to \infty$, such that
$\E[ F_n]\ge f(\beta,h)-\frac\delta2$, for $n\ge n_1=n_1(\beta,h,\delta)$, and the conclusion follows.
\end{proof}

\subsection{Proof of Theorem \ref{thm:theorem1}}

Fix $h>0$, $\beta>\beta_c(h)>0$ and $\eta>0$. Since the event $\{|y_n(\sigma)-m^*|>\eta\}$ is
non-increasing in $\eta$, an exponential bound established for some
$\eta'\in(0,\eta]$ implies the same bound for $\eta$. Therefore, we
assume $\eta\in(0,1)$, which is what Proposition \ref{prop:var}(iii) requires. Let $c_1=c_1(\beta,h,\eta)>0$ be the gap in Equation
\eqref{eq:gap} of Proposition \ref{prop:var} (iii), let $\kappa=\kappa(\beta)$
be the constant of Proposition \ref{prop:upper}, and set
\begin{equation}
\label{eq:delta}
\delta:=\min\left\{\frac{c_1}{4\kappa},\ \frac{c_1}{4},\ \frac12\right\} \in (0,1).
\end{equation}
Introduce the two events
\[
\mathcal H_n^{\mathrm{up}}
:=\left\{\frac{1}{n}\log Z_{\beta,h,n}(m)
\le\Phi_h(\beta,m)+\kappa\delta, \  \forall \, m\in M_n \right\},
\quad
\mathcal H_n^{\mathrm{low}}
:=\left\{\frac{1}{n}\log Z_{\beta,h,n}\ge f(\beta,h)-\delta\right\},
\]
and define
\[
\mathcal H_n:=\mathcal H_n^{\mathrm{up}}\cap\mathcal H_n^{\mathrm{low}}, \qquad \forall \, n\ge 1.
\]

\noindent Then, consider $n_0(\beta,\delta)$ introduced in Proposition  \ref{prop:upper}, $n_1(\beta,h,\delta)$  introduced in Proposition  \ref{prop:lower} and define
$
n_2(\beta,h,\delta) := \max\{n_0(\beta,\delta),n_1(\beta,h,\delta)\}.
$

\medskip

\noindent {\it \underline{ $\mathcal H_n$ is a good event}.}
By Proposition \ref{prop:upper} and Proposition \ref{prop:lower},
\begin{equation}
\label{eq:probHn}
\P\left(\mathcal H_n^c\right)
\le \frac{\kappa n}{\delta} \, \ee^{-n\delta^2/2}
+\ee^{-n\delta^2/(8\beta^2)},
\qquad \forall \, n\ge n_2(\beta,h,\delta).
\end{equation}

\medskip

\noindent {\it \underline{On $\mathcal H_n$}.} Fix $n \geq 1$ and let $m\in M_n$ with $|m-m^*|>\eta$. By
Equation \eqref{eq:gap}  and Proposition \ref{prop:var} (iii), we have
$\Phi_h(\beta,m)\le\Phi_h(\beta,m^*)-c_1=f(\beta,h)-c_1$, hence
\[
Z_{\beta,h,n}(m)\le\exp\left\{n(f(\beta,h)-c_1+\kappa\delta)\right\}.
\]
Since moreover $Z_{\beta,h,n}\ge\exp\left\{n(f(\beta,h)-\delta)\right\}$ on
$\mathcal H_n^{\mathrm{low}}$, and since there are at most $\#M_n=n+1$ layers, we obtain
\[
\mathcal G_{\beta,h,n}\left\{|y_n(\sigma)-m^*|>\eta\right\}
=\frac{\displaystyle\sum_{m\in M_n:\,|m-m^*|>\eta}Z_{\beta,h,n}(m)}{Z_{\beta,h,n}}
\le (n+1)\,\ee^{-n(c_1-\kappa\delta-\delta)}
\le (n+1)\,\ee^{-nc_1/2},
\]
where the last inequality uses $\kappa\delta\le c_1/4$ and
$\delta\le c_1/4$ (see Equation \eqref{eq:delta}).

\medskip

\noindent {\it \underline{Conclusion}.} Since $\mathcal  G_{\beta,h,n}(\, \cdot \,)$ is a probability measure, the
quantity under consideration is bounded by $1$, so splitting on
$\mathcal H_n$ and using Equation \eqref{eq:probHn} yields
\[
\E  \left[\mathcal G_{\beta,h,n}\left\{|y_n(\sigma)-m^*|>\eta\right\}\right]
\le (n+1)\ee^{-nc_1/2}
+\frac{\kappa n}{\delta} \, \ee^{-n\delta^2/2}
+\ee^{-n\delta^2/(8\beta^2)},
\qquad \forall \, n\ge n_2(\beta,h,\delta).
\]
All three terms decay exponentially fast in $n$, which yields Equation \eqref{eq:quant} with
any
\[
c=c(\beta,h,\eta)< \min\left\{\frac{c_1}{2},\ \frac{\delta^2}{2},\
\frac{\delta^2}{8\beta^2}\right\}
\]
and a suitable $C=C(\beta,h,\eta)$.

For the {\it almost-sure} convergence, it is enough to observe that,  for all $\varepsilon >0$, Markov's inequality implies
\[
\sum_{n \ge 1} \p\left(\left\vert \mathcal G_{\beta,h,n}\left\{|y_n(\sigma)-m^*|>\eta\right\}\right\vert > \varepsilon \right) \le \frac{1}{\varepsilon} \sum_{n \ge 1} \E  \left[\mathcal G_{\beta,h,n}\left\{|y_n(\sigma)-m^*|>\eta\right\}\right] < \infty,
\]
by Equation \eqref{eq:quant} and then apply the Borel-Cantelli lemma.  \hfill$\square$

\subsection{An immediate corollary}
\label{subsec:coro_immediate}

This one-replica concentration result has an immediate two-replica consequence, which
we record here since it is the natural (but, as Remark \ref{rem:counter-ex} will show, insufficient)
starting point for the proof of Theorem \ref{thm:theorem2}.

\medskip

\begin{cor}[Joint concentration of the two marginal magnetizations]
\label{cor:marginals}
Fix $h>0$ and $\beta>\beta_c(h)>0$. Then, for every $\eta>0$,
\[
\lim_{n\to\infty}\mathbb E\left[\mathcal G_{\beta,h,n}^{\otimes 2}\left\{|y_n(\sigma)-m^*|>\eta
\; \;  {\rm or }\; \;  |y_n(\sigma')-m^*|>\eta\right\}\right]=0.
\]
\end{cor}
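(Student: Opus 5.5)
The plan is to reduce the two-replica statement to Theorem \ref{thm:theorem1} by a union bound under the product measure. I will not need any joint structure of the pair.

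Fix $\eta>0$ and write $B_n:=\{\sigma\in\Sigma_n:|y_n(\sigma)-m^*|>\eta\}$. The event in the statement is $(B_n\times\Sigma_n)\cup(\Sigma_n\times B_n)$. For each fixed realization of the disorder, $\mathcal G_{\beta,h,n}^{\otimes 2}$ is a product of two copies of the probability measure $\mathcal G_{\beta,h,n}$. Both sets in the union are cylinder sets, so the union bound gives
\[
\mathcal G_{\beta,h,n}^{\otimes 2}\left\{|y_n(\sigma)-m^*|>\eta \ \text{or}\ |y_n(\sigma')-m^*|>\eta\right\}
\le \mathcal G_{\beta,h,n}(B_n)\,\mathcal G_{\beta,h,n}(\Sigma_n)+\mathcal G_{\beta,h,n}(\Sigma_n)\,\mathcal G_{\beta,h,n}(B_n)
=2\,\mathcal G_{\beta,h,n}(B_n).
\]

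Taking expectations, Theorem \ref{thm:theorem1} bounds the right-hand side by $2Ce^{-cn}$, which tends to $0$. This in fact gives the limit together with the same exponential rate as in Equation \eqref{eq:quant}. The almost sure version follows either from the almost sure statement of Theorem \ref{thm:theorem1} or from Borel--Cantelli, exactly as at the end of the proof of that theorem.

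No step here is a real obstacle. The only point that needs checking is that the bound uses only the marginals of the product measure, and this is exactly why the corollary says nothing about the joint behaviour of $\sigma_i\sigma'_i$, as Remark \ref{rem:counter-ex} is meant to stress.
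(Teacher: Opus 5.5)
Your proposal is correct and is essentially the paper's own proof: a union bound under the product measure gives $\mathcal G_{\beta,h,n}^{\otimes 2}\{\cdots\}\le 2\,\mathcal G_{\beta,h,n}\{|y_n(\sigma)-m^*|>\eta\}$, and Theorem \ref{thm:theorem1} then gives the limit. Your extra remarks, that the same exponential rate carries over and that an almost sure version also follows, are correct but not needed for the statement.
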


\medskip

\begin{proof}
Since $\mathcal  G_{\beta,h,n}^{\otimes 2} = \mathcal G_{\beta,h,n}\otimes \mathcal G_{\beta,h,n}$, a union bound
gives
\[
\mathcal  G_{\beta,h,n}^{\otimes 2}\left\{|y_n(\sigma)-m^*|>\eta \; \; {\rm or }\; \;  |y_n(\sigma')-m^*|
>\eta\right\}\le 2\,\mathcal  G_{\beta,h,n}\{|y_n(\sigma)-m^*|>\eta\},
\]
and the conclusion follows from Theorem \ref{thm:theorem1}.
\end{proof}

\noindent Corollary \ref{cor:marginals} gives joint concentration of the two {\it marginal}
magnetizations near $(m^*,m^*)$. As Remark \ref{rem:counter-ex} shows by an elementary example, this
does {\it not} imply that the overlap $r_n(\sigma,\sigma')$ concentrates near
$(m^*)^2$ : concentration of two marginals says nothing about their joint (site-by-site)
correlation. Ruling out this naive route is precisely the starting point of Section \ref{sec:overlap}.

\section{Proof of Theorem \ref{thm:theorem2}}
\label{sec:overlap}

\subsection{Two-replica estimates}
\label{subsec:2replica}

\begin{rem}[Why a one-replica statement does not suffice]
\label{rem:counter-ex}
It is tempting to argue that on $\{q_n\le\varepsilon\}$ the two replicas branch at an early generation $k$,
so that (conditionally on the disorder) their tails are independent. Hence, since each tail has empirical
magnetization close to $m^*$ by Theorem \ref{thm:theorem1} --- or
more precisely, jointly, by Corollary \ref{cor:marginals} --- the overlap $r_n$ should be close to $(m^*)^2$. This is
not valid, indeed if $(\tau,\tau')\sim\mu_1\otimes\mu_2$ on $\{-1,1\}^N$, then
\[
\mathbb E\left[\frac1N\sum_i \tau_i\tau_i'\right]=\frac1N\sum_i\langle\tau_i\rangle_{\mu_1}
\langle\tau_i\rangle_{\mu_2},
\]
which is a site-by-site correlation, not the product of the two {\it global} magnetizations. Concentration
of $\frac1N\sum_i\tau_i$ under each $\mu_j$ says nothing about it. (Take $\mu_1=\mu_2$ uniform on the
two alternating sequences $\pm(1,-1,1,\dots) \in \{-1,1\}^{N}$ with $N \ge 2$ even : both magnetizations $y_N(\tau)$ and  $y_N(\tau')$ vanish identically ($N$ is even), $E\left[ \tau_i\tau_i'\right] = 0$ so that $E\left[r_N \right] = 0$ but
$r_N=\frac1N\sum_i\tau_i\tau_i'=\pm1$ with probability $1/2$, i.e. its law $\frac{1}{2}(\delta_{-1}+\delta_{1})$ is far from concentrated at $0$.) What forces $r_n\to(m^*)^2$ is the entropic cost of a joint
empirical {\it type} that is {\it not} a product --- the content of the subadditivity inequality in Equation
\eqref{eq:subadd} below, quantified by the variational problem of Proposition \ref{prop:variational}.
\end{rem}

\subsubsection*{Empirical types of a pair}

Let $n\ge2$, $(\sigma,\sigma')\in\Sigma_n^2$ so that 
\[
k:=|\sigma\wedge\sigma'|\in \{0,\dots,n-1\} \qquad {\rm and } \qquad  N=N(n,k):=n-k-1\ge0.
\]
The {\it type} of the pair $(\sigma,\sigma')$ is 
$$
{\bf q}(\sigma,\sigma')=\left(q_{ab}(\sigma,\sigma')\right)_{a,b\in\{-1,1\}},
$$
 defined, for $N\ge1$, by
\[
q_{ab}(\sigma,\sigma'):=\frac1N\,\#\{k+1<i\le n:\ \sigma_i=a,\ \sigma_i'=b\}, \qquad \forall \, a,b  \in\{-1,1\}.
\]
Let $\mathcal P$ be the simplex of probability measures on $\{-1,1\}^2$ and 
\[
\mathcal P_N:=\{{\bf q} \in\mathcal P: Nq_{ab}\in\N, \ \forall \, a,b \in\{-1,1\}\},
\]
so  that
\begin{equation}
\label{eq:PN-count}
\#\mathcal P_N\le (N+1)^3\le(n+1)^3. 
\end{equation}
For ${\bf q} \in\mathcal P$, set
\[
m_1({\bf q}):=\sum_{a,b \in\{-1,1\}}a\,q_{ab},\qquad m_2({\bf q}):=\sum_{a,b \in\{-1,1\}}b\,q_{ab},
\]
and
\[
\rho({\bf q}):=\sum_{a,b \in\{-1,1\}}ab\,q_{ab},\qquad
S({\bf q}):=-\sum_{a,b \in\{-1,1\}}q_{ab}\log q_{ab}.
\]
Recalling the classical information-theoretic
 facts from Appendix \ref{app:subadditivity}, the marginals of ${\bf q}$ have entropies $s(m_1({\bf q}))$, $s(m_2({\bf q}))$ (the binary entropy function $s$ being defined in Equation \eqref{eq:binary-entropy}), subadditivity of entropy (see Lemma \ref{lem:subadditivity}) gives
\begin{equation}
S({\bf q})\le s(m_1({\bf q}))+s(m_2({\bf q})),
 \label{eq:subadd}
\end{equation}
with equality  if and only if
\begin{equation*}
q_{ab}=\left(\frac{1+am_1({\bf q})}{2}\right)\left(\frac{1+bm_2({\bf q})}{2}\right), \qquad \forall \, a,b  \in\{-1,1\},
\end{equation*}
i.e.  if and only if ${\bf q}$ is a {\it product type}, in which case $\rho({\bf q})=m_1({\bf q})m_2({\bf q})$. Since, by definition of $k=|\sigma\wedge\sigma'|$, 
$\sigma_i\sigma_i'=1$ for $i\le k$ and $\sigma_{k+1}\sigma_{k+1}'=-1$,
\begin{equation} \label{eq:overlap-type}
r_n(\sigma,\sigma')=\frac{k-1}n+\frac Nn\,\rho\left({\bf q}(\sigma,\sigma')\right),
\qquad\text{hence}\qquad
\left|r_n(\sigma,\sigma')-\rho\left({\bf q}(\sigma,\sigma')\right)\right|\le\frac{2(k+1)}n. 
\end{equation}
Splitting the Hamiltonian at generation $k+1$, write $H_n(\sigma,h)=H_{k+1}(\sigma|_{k+1},h)
+\widetilde H_k(\sigma)$ with $\widetilde H_k(\sigma):=\sum_{k+1<i\le n}(U(\sigma|_i)+h\sigma_i)$,
$\widetilde X_k(\sigma):=\sum_{k+1<i\le n}U(\sigma|_i)$ and set, for ${\bf q}\in\mathcal P_N$,
\begin{equation}
 \label{eq:Wkn-def}
W_{k,n}({\bf q}):=\sum_{\substack{(\sigma,\sigma')\in\Sigma_n^2:\ |\sigma\wedge\sigma'|=k\\
{\bf q}(\sigma,\sigma')={\bf q}}} \ee^{\beta\widetilde H_k(\sigma)+\beta\widetilde H_k(\sigma')}.
\end{equation}
Because $\widetilde H_k(\sigma)$ and $\widetilde H_k(\sigma')$ involve two disjoint families of edges,
for each fixed pair $\widetilde X_k(\sigma)$ and $\widetilde X_k(\sigma')$ are independent $\mathcal N(0,N)$
variables. This is the only probabilistic input below.

\subsubsection*{The two-replica first moment and the head}

For $0\le k<n$, $N=N(n,k):=n-k-1\ge1$, ${\bf q} \in\mathcal P_N$, $a_1,a_2\ge0$, let
\[
\widetilde N_{k,n}({\bf q};a_1,a_2):= \# \left\{(\sigma,\sigma')\in\Sigma_n^2 \, : \, |\sigma\wedge\sigma'|=k  \, , \,  {\bf q}(\sigma,\sigma')={\bf q}  \, , \, \widetilde X_k(\sigma)\ge a_1N   \, , \,  \widetilde X_k(\sigma')\ge a_2N\right\}.
\]

\medskip

\begin{lem}[Two-replica first moment]
\label{lem:firstmoment}
For every $n\ge2$, every $0\le k<n$ with $N\ge1$, every ${\bf q} \in\mathcal P_N$ and all $a_1,a_2\ge0$,
\begin{equation}
\label{eq:first-moment-bound}
\mathbb E\left[\widetilde N_{k,n}({\bf q};a_1,a_2)\right]\le
\exp\left\{(k+1)\log2+N\left(S({\bf q})-\frac{a_1^2+a_2^2}2\right)\right\}. 
\end{equation}
\end{lem}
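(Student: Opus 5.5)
By linearity of expectation, $\mathbb E[\widetilde N_{k,n}({\bf q};a_1,a_2)]$ is a sum over admissible pairs of the probability that both tail sums are large. The plan is to bound this probability uniformly over pairs and then count the pairs.

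\emph{Probability for a fixed pair.} Fix $(\sigma,\sigma')$ with $|\sigma\wedge\sigma'|=k$ and ${\bf q}(\sigma,\sigma')={\bf q}$. As noted after Equation \eqref{eq:Wkn-def}, $\widetilde X_k(\sigma)$ and $\widetilde X_k(\sigma')$ are sums of $U$'s over two disjoint families of $N$ edges. They are therefore independent, each with law $\mathcal N(0,N)$. The Gaussian tail bound $\P(\mathcal N(0,N)\ge aN)\le \ee^{-Na^2/2}$ holds for $a\ge0$, and this is where the hypothesis $a_1,a_2\ge0$ enters. Independence then gives
\[
\P\left(\widetilde X_k(\sigma)\ge a_1N,\ \widetilde X_k(\sigma')\ge a_2N\right)\le \exp\left\{-N\,\frac{a_1^2+a_2^2}{2}\right\},
\]
uniformly over admissible pairs.

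\emph{Counting pairs.} A pair with $|\sigma\wedge\sigma'|=k$ is determined in three pieces:
\begin{itemize}
\item the common prefix $\sigma|_k=\sigma'|_k$, giving $2^k$ choices;
\item the spins at position $k+1$, where $\sigma_{k+1}=-\sigma'_{k+1}$, giving $2$ choices;
\item the pair of tail words $((\sigma_i)_{k+1<i\le n},(\sigma'_i)_{k+1<i\le n})$.
\end{itemize}
Read as a single word of length $N$ over the alphabet $\{-1,1\}^2$, the tail pair has empirical type ${\bf q}$. The number of such words is the multinomial coefficient $N!/\prod_{a,b}(Nq_{ab})!$. By the classical type-class bound (Cover and Thomas \cite[Theorem 11.1.3]{CoverThomas06}, the same bound already used in the proof of Proposition \ref{prop:upper}), this coefficient is at most $\ee^{NS({\bf q})}$. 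Hence the number of admissible pairs is at most $2^{k+1}\ee^{NS({\bf q})}$.

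\emph{Conclusion.} Multiplying the count by the uniform probability bound gives exactly Equation \eqref{eq:first-moment-bound}.

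The only step needing care is the independence of the two tails. It relies on the paths of $\sigma$ and $\sigma'$ having already split at generation $k+1$, so the vertices $\sigma|_i$ and $\sigma'|_i$ for $i>k+1$ are all distinct. This holds by the definition of $k$. The deterministic term $h\sigma_i$ plays no role, because the events involve $\widetilde X_k$ and not $\widetilde H_k$. I expect no real obstacle here; the main point to verify is that the type-class bound $\ee^{NS({\bf q})}$ holds exactly, with no polynomial prefactor, which is the standard statement.
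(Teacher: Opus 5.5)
Your proposal is correct and follows the paper's proof essentially verbatim. You count $2^k\cdot 2\cdot \ee^{NS({\bf q})}$ admissible pairs via the Cover--Thomas type-class bound, which indeed carries no polynomial prefactor. You then multiply by the product $\ee^{-Na_1^2/2}\ee^{-Na_2^2/2}$ of the two independent Gaussian tail bounds, which yields \eqref{eq:first-moment-bound} exactly.
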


\medskip

\begin{proof}
Such a pair is determined by its common ancestor ($2^k$ choices), the label $\sigma_{k+1}$ ($2$
choices, forcing $\sigma_{k+1}'=-\sigma_{k+1}$), and for $k+1<i\le n$ the value $(\sigma_i,\sigma_i')$,
constrained to equal each $(a,b)$ exactly $Nq_{ab}$ times, for $a,b \in \{-1,1\}$. Hence we have at most
$2^{k+1}\ee^{NS({\bf q})}$ such pairs --- again by the entropy bound on multinomial
coefficients (see \cite[Theorem 11.1.3]{CoverThomas06}). For each pair, $\widetilde X_k(\sigma),\widetilde X_k(\sigma')$ are independent $\mathcal N(0,N)$, so
$\P(\widetilde X_k(\sigma)\ge a_1N \, ; \, \widetilde X_k(\sigma')\ge a_2N)\le \ee^{-Na_1^2/2}\ee^{-Na_2^2/2}$
by the Gaussian tail bound. Multiplying and summing gives Equation  \eqref{eq:first-moment-bound}.
\end{proof}

\medskip

\begin{lem}[Uniform control of the head]
\label{lem:head}
Fix $h>0$. Set $\Theta_0=\Theta_0(h):=\sqrt{2\log2}+h$ and
\[
\widetilde{\mathcal K}_n=\widetilde{\mathcal K}_n(\delta):=\{H_j(\sigma,h)\le\Theta_0 j+\delta n, \ \forall\, 0\le j\le n,\ \forall\, \sigma \in\Sigma_j\},  \qquad \forall \, n\ge1 \, , \, \forall \, \delta>0.
\]
 Then
\[
\P(\widetilde{\mathcal K}_n(\delta)^c)\le n \, \ee^{-\sqrt{2\log2}\,\delta n}, \qquad \forall \, n\ge1.
\]
\end{lem}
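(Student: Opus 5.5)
The plan is a plain union bound over generations, with a first-moment (Markov) bound at each fixed generation. The point is that the field contributes at most $hj$ deterministically, so only the Gaussian part $X_j$ needs to be controlled. The slack $\delta n$ then translates, through the cross term in the Gaussian exponent, into the factor $\ee^{-\sqrt{2\log 2}\,\delta n}$.

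\emph{Step 1: reduction to the field-free walk.} The case $j=0$ is trivial, since $H_0(\varnothing,h)=0\le\delta n$. For $1\le j\le n$ and $\sigma\in\Sigma_j$, I use $h\sum_{i\le j}\sigma_i\le hj$, so that $H_j(\sigma,h)\le X_j(\sigma)+hj$. Writing $A:=\sqrt{2\log 2}$, so that $\Theta_0=A+h$, I get
\[
\widetilde{\mathcal K}_n(\delta)^c\subset\bigcup_{j=1}^n\bigl\{\exists\,\sigma\in\Sigma_j:\ X_j(\sigma)>Aj+\delta n\bigr\}.
\]

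\emph{Step 2: first moment at a fixed generation.} Fix $j\in\{1,\dots,n\}$. By Markov's inequality and the fact that each $X_j(\sigma)$ is $\mathcal N(0,j)$, the Gaussian tail bound gives
\[
\P\bigl(\exists\,\sigma\in\Sigma_j: X_j(\sigma)>Aj+\delta n\bigr)\le 2^j\exp\Bigl\{-\frac{(Aj+\delta n)^2}{2j}\Bigr\}.
\]
Expanding the square,
\[
\frac{(Aj+\delta n)^2}{2j}=\frac{A^2j}{2}+A\delta n+\frac{\delta^2n^2}{2j}\ \ge\ j\log 2+A\delta n .
\]
The factor $2^j$ therefore cancels exactly against $\ee^{-j\log 2}$, and each generation contributes at most $\ee^{-A\delta n}$, uniformly in $j$.

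\emph{Step 3: summation.} Summing the bound of Step 2 over $j=1,\dots,n$ yields $\P(\widetilde{\mathcal K}_n(\delta)^c)\le n\,\ee^{-\sqrt{2\log2}\,\delta n}$, as claimed.

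There is no real obstacle in this argument. The only point needing care is to keep the cross term $A\delta n$ when expanding the square: the slack must enter the exponent linearly, rather than being lost to a cruder bound such as $(Aj+\delta n)^2\ge A^2j^2$. I also discard the nonnegative term $\delta^2n^2/(2j)$, which is harmless.
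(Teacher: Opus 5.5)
Your proposal is correct and follows essentially the same route as the paper: bound $H_j(\sigma,h)\le X_j(\sigma)+hj$, take a union bound over the $2^j$ vertices with the Gaussian tail at $t=\sqrt{2\log2}\,j+\delta n$, keep the cross term so that $2^j$ cancels against $\ee^{-j\log2}$, and sum over $1\le j\le n$ (with $j=0$ trivial). Nothing is missing.
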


\medskip

\begin{proof}
Fix $1\le j\le n$. Since $H_j(\sigma,h)\le X_j(\sigma)+hj$ and $X_j(\sigma)\sim \mathcal N(0,j)$, a union bound over $2^j$
vertices and the classical Gaussian tail bound give, with $t=\sqrt{2\log2}\,j+\delta n$,
\[
\P\left(\exists \, \sigma \in\Sigma_j: X_j(\sigma)>t\right)\le 2^j \, \ee^{-\frac{t^2}{2j}}
=2^j\exp\left\{-\log2\, j-\sqrt{2\log2}\,\delta n-\frac{(\delta n)^2}{2j}\right\}
\le \ee^{-\sqrt{2\log2}\,\delta n}.
\]
Summing over $1\le j\le n$ concludes the proof (the inequality is deterministic and trivial for $j=0$).
\end{proof}

\subsubsection*{The rate functions $g_\beta$, $g_\beta^{(2)}$}

For $S\ge0$, let
\begin{equation}
\label{eq:gbeta-def}
g_\beta(S):=\max_{0\le a\le\sqrt{2S}}\left\{S-\frac{a^2}2+\beta a\right\}
=\begin{cases} S+\beta^2/2, & S\ge\beta^2/2,\\ \beta\sqrt{2S}, & S\le\beta^2/2,\end{cases}
\end{equation}
and
\begin{equation}
\label{eq:g2beta-def}
g_\beta^{(2)}(S):=\max_{\substack{a_1,a_2\ge0\\ a_1^2+a_2^2\le2S}}
\left\{S-\frac{a_1^2+a_2^2}2+\beta(a_1+a_2)\right\}. 
\end{equation}
From Subsection \ref{subsec:2D-encoding}, recall that $\Phi_h(\beta,m)=\beta hm+g_\beta(s(m))$, for all $m\in[-1,1]$.
The two branches of Equation \eqref{eq:gbeta-def} agree, together with their derivatives (both equal to $\beta^2$,
resp. to $1$), at $S=\beta^2/2$, so $g_\beta$ is $C^1$; moreover each branch is concave (an affine map, resp.\
$S\mapsto\beta\sqrt{2S}$, concave as $\sqrt\cdot$ composed with a linear map), so
\begin{equation}
\label{eq:gbeta-props}
g_\beta\ \text{is continuous, strictly increasing, concave on }[0,\infty),\ \ g_\beta(0)=0,\ \
g_\beta(S)\le S+\beta\sqrt{2S}. 
\end{equation}
(The upper bound is a consequence of $\beta^2/2 \le \beta^2 \le \beta \sqrt{2S}$ for $S\ge\beta^2/2$, and trivial otherwise.) Being concave with
$g_\beta(0)=0$, $g_\beta$ is subadditive (simple exercise). Writing $(a_1,a_2)=\varrho(\cos\theta,\sin\theta)$, at fixed
$0 \le \varrho\le\sqrt{2S}$ the quantity $a_1+a_2=\varrho(\cos\theta+\sin\theta)$ is maximal, equal to $\sqrt2\varrho$
at $\theta=\pi/4$. Hence, substituting $\varrho=\sqrt2 a$,
\[
g_\beta^{(2)}(S)=\max_{0\le\varrho\le\sqrt{2S}}\left\{S-\frac{\varrho^2}2+\sqrt2\beta\varrho\right\}
=\max_{0\le a\le\sqrt S}\{S-a^2+2\beta a\}=2\max_{0\le a\le\sqrt S}\left\{\frac S2-\frac{a^2}2+\beta a\right\},
\]
so that we get the key identity :
\begin{equation}
\label{eq:g2-identity}
g_\beta^{(2)}(S)=2\,g_\beta(S/2), \qquad \forall \, S\ge0. 
\end{equation}
Consequently, by Equation \eqref{eq:g2-identity}, subadditivity of $g_\beta$ and Equation \eqref{eq:gbeta-props},
\begin{equation}
 \label{eq:g2-subadd}
g_\beta^{(2)}(S+\delta)-g_\beta^{(2)}(S)=2\left[g_\beta\left(\frac{S+\delta}{2}\right)-g_\beta\left(\frac
{S}{2}\right)\right]\le2\,g_\beta(\delta/2)\le\delta+2\beta\sqrt\delta, \qquad \forall \, S,\delta\ge0.
\end{equation}
Finally, for $\beta,h>0$ and ${\bf q}\in\mathcal P$, consider the rate function
\begin{equation}
\label{eq:Xi-def}
\Xi_{\beta,h}({\bf q}):=\beta h\left(m_1({\bf q})+m_2({\bf q})\right)+g_\beta^{(2)}\left(S({\bf q})\right). 
\end{equation}

\subsubsection*{The uniform upper bound}

\medskip

\begin{prop}[Uniform upper bound on the restricted two-replica partition functions]
\label{prop:unifbound}
Let $\beta,h>0$.
There is $\widetilde \kappa=\widetilde \kappa(\beta)<\infty$ such that for every $\delta\in(0,1)$ and every $\varepsilon\in\left(0,
\min\{\frac{1}{8},\frac{\delta}{8\log2}\}\right)$ there is $n_3=n_3(\beta,\delta)\ge1$ such that, if we define 
\[
\mathcal M_n:=\left\{ 
\frac1{n-k-1}\log W_{k,n}({\bf q})\le \Xi_{\beta,h}({\bf q})+\widetilde \kappa\sqrt\delta, \quad \forall \,0\le k\le\varepsilon n,\ \forall \, {\bf q}\in\mathcal P_{n-k-1} \right\},     
\]
we have
\[\P(\mathcal M_n^c)\le\frac{\widetilde \kappa n^4}{\delta^2}
\ee^{-n\delta/2}, \qquad \forall \, n \ge n_3.
\]

\end{prop}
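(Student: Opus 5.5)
The proof follows the one-replica argument of Proposition \ref{prop:upper}, with Lemma \ref{lem:firstmoment} replacing Lemma \ref{lem:firstmom} and the identity \eqref{eq:g2-identity}, via \eqref{eq:g2-subadd}, absorbing the errors.

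\emph{Field term.} Fix $k\le\varepsilon n$, $N=n-k-1$ and ${\bf q}\in\mathcal P_N$. For every pair counted in $W_{k,n}({\bf q})$, the field part of $\widetilde H_k(\sigma)+\widetilde H_k(\sigma')$ equals $hN(m_1({\bf q})+m_2({\bf q}))$. Hence
\[
W_{k,n}({\bf q})=\ee^{\beta hN(m_1({\bf q})+m_2({\bf q}))}\sum \ee^{\beta(\widetilde X_k(\sigma)+\widetilde X_k(\sigma'))},
\]
the sum running over the same pairs. It therefore remains to show $\frac1N\log$ of this sum is at most $g^{(2)}_\beta(S({\bf q}))+\widetilde\kappa\sqrt\delta$.

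\emph{Good event.} Take the grid $a_j=j\delta$, $0\le j\le J$, with $J=\lceil(\sqrt{2\log 4}+2)/\delta\rceil$; this suffices since $S({\bf q})\le\log 4$. Put $\lambda_N:=(k+1)\log2/N$. Because $k\le\varepsilon n$, $N\ge n/2$ for $n$ large, and $\varepsilon<\delta/(8\log2)$, we get $\lambda_N\le\delta/2$ for $n\ge n_3$. Let $\mathcal M'_n$ be the event on which, for all $k\le\varepsilon n$, all ${\bf q}$ and all grid pairs $(j_1,j_2)$:
\begin{enumerate}
\item $\widetilde N_{k,n}({\bf q};a_{j_1},a_{j_2})\le\exp\{(k+1)\log2+N(S({\bf q})-\frac{a_{j_1}^2+a_{j_2}^2}2+\delta)\}$;
\item $\widetilde N_{k,n}({\bf q};a_{j_1},a_{j_2})=0$ whenever $\frac{a_{j_1}^2+a_{j_2}^2}{2}\ge S({\bf q})+\lambda_N+\delta$.
\end{enumerate}
By Lemma \ref{lem:firstmoment} and Markov's inequality, each such event fails with probability at most $\ee^{-N\delta}\le\ee^{-n\delta/2}$. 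The number of events is at most $(\varepsilon n+1)(n+1)^3(J+2)^2\lesssim n^4/\delta^2$, which gives the claimed bound on $\P(\mathcal M_n^c)$, provided $\mathcal M'_n\subset\mathcal M_n$.

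\emph{Slicing on $\mathcal M'_n$.} Partition the pairs according to the grid cell containing $(\widetilde X_k(\sigma)/N,\widetilde X_k(\sigma')/N)$, putting negative values into the cell with lower corner $0$ and bounding $\ee^{\beta x}\le1$ there. A cell with lower corner $(a_{j_1},a_{j_2})$ then contributes at most
\[
\exp\Big\{N\Big(S-\tfrac{a_{j_1}^2+a_{j_2}^2}2+\beta(a_{j_1}+a_{j_2})+2\beta\delta+\delta+\lambda_N\Big)\Big\},\qquad S=S({\bf q}).
\]
Only cells with $\frac{a_{j_1}^2+a_{j_2}^2}{2}<S+\lambda_N+\delta\le S+2\delta$ are nonzero. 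Since $-\frac{a_1^2+a_2^2}{2}+\beta(a_1+a_2)$ does not depend on $S$,
\[
S-\tfrac{a_1^2+a_2^2}{2}+\beta(a_1+a_2)=\Big[(S+2\delta)-\tfrac{a_1^2+a_2^2}{2}+\beta(a_1+a_2)\Big]-2\delta\le g^{(2)}_\beta(S+2\delta)-2\delta .
\]
By \eqref{eq:g2-subadd}, $g^{(2)}_\beta(S+2\delta)\le g^{(2)}_\beta(S)+2\delta+2\beta\sqrt{2\delta}$. The cell containing negative values is dominated by the corner $(0,0)$, and $g^{(2)}_\beta(S)\ge S$. Summing over at most $(J+1)^2$ cells, the factor $(J+1)^2\lesssim\delta^{-2}$ costs at most $\delta$ in the exponent once $n$ is large. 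Since $\delta\le\sqrt\delta$, all errors collect into $\widetilde\kappa(\beta)\sqrt\delta$.

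\emph{Main obstacle.} The delicate point is the head factor $2^{k+1}$ in Lemma \ref{lem:firstmoment}. It enters both the Markov thresholds and the empty-cell cutoff, which must be shifted by $\lambda_N$, and it is this shift that produces the $\sqrt\delta$ loss through \eqref{eq:g2-subadd}. The restrictions $k\le\varepsilon n$ and $\varepsilon<\delta/(8\log2)$ are used exactly here, to keep $\lambda_N\le\delta/2$ uniformly in $k$.
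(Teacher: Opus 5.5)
Your overall architecture matches the paper's proof: you factor out the field term, slice on a $\delta$-grid, use Markov plus a union bound over $(k,{\bf q},j_1,j_2)$, and finish with \eqref{eq:g2-subadd}. However, your treatment of negative energies has a genuine hole.

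\textbf{The gap.} Your good event $\mathcal M_n'$ is stated for $\widetilde N_{k,n}({\bf q};a_{j_1},a_{j_2})$ with $j_1,j_2\in\{0,\dots,J\}$, and you justify it by Lemma \ref{lem:firstmoment}. For $j=0$ the constraint is therefore $\widetilde X_k\ge 0$. You then put negative values into the lower-corner-$0$ cell. A pair with $\widetilde X_k(\sigma)<0$ and $\widetilde X_k(\sigma')\in[a_{j_2}N,a_{j_2+1}N)$ is counted by none of these variables. So item 1 does not bound the number of such pairs, and item 2 does not make them vanish above the cutoff. In particular, nothing on $\mathcal M_n'$ excludes pairs with $\widetilde X_k(\sigma)<0$ and $\widetilde X_k(\sigma')\ge a_JN$, whose weight $\ee^{\beta\widetilde X_k(\sigma')}$ is unbounded. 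Your remark about the corner $(0,0)$ covers only the doubly negative pairs, via the deterministic count $2^{k+1}\ee^{NS({\bf q})}$. The mixed-sign cells are left uncontrolled, so $\mathcal M_n'\subset\mathcal M_n$ does not follow as written.

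\textbf{The repair.} For any coordinate whose index carries the negative values, \emph{drop} the lower-bound constraint rather than replacing it by $\ge 0$. The first-moment bound is unchanged, because the Gaussian factor that Lemma \ref{lem:firstmoment} uses at $a=0$ is already $1=\ee^{-N\cdot 0^2/2}$. This is exactly the paper's formal index $*$, with $a_*=0$, $I_*=(-\infty,0)$, and the constraint ``dropped --- not replaced by $\ge 0$''. Your merging of $I_*$ into $I_0$ works just as well once the index-$0$ count is unconstrained. Events 1--2 then control every cell, including the mixed top cells, and the rest of your argument goes through.

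\textbf{Comparison with the paper.} Your one real variation is the handling of the head factor $2^{k+1}$.
\begin{itemize}
\item You shift the emptiness cutoff to $S({\bf q})+\lambda_N+\delta$. Each failure probability is then exactly $\ee^{-N\delta}$, and you pay through $g^{(2)}_\beta(S+2\delta)$.
\item The paper keeps the cutoff at $S({\bf q})+\delta$ and uses $\varepsilon<\delta/(8\log 2)$ to get $(k+1)\log 2-N\delta\le -n\delta/2$.
\end{itemize}
Both versions spend the smallness of $\varepsilon$ in essentially the same way. Contrary to your closing remark, the $\sqrt\delta$ loss comes from the $\delta$-margin of the emptiness cutoff via \eqref{eq:g2-subadd} in either version, not from the head.
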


\medskip

\begin{proof}
Here $\widetilde C=\widetilde C(\beta)$ denotes a finite constant, possibly changing from line to line. Fix $\delta,\varepsilon$ as
above and take $n$ large enough such that $\varepsilon n\ge1$ and
\begin{equation}
\label{eq:N-lower}
N=n-k-1\ge(1-\varepsilon)n-1\ge\frac{3}{4} n, \qquad \forall \, 0\le k\le\varepsilon n. 
\end{equation}
Let $A := 2\sqrt{\log 2}$, so that $S({\bf q})\le\log 4 = A^2/2$ for all ${\bf q}\in P$ (see \cite[Theorem 2.6.4]{CoverThomas06} for the inequality), and let $J:=\lceil(A+1+\delta)/\delta\rceil$,
$a_j:=j\delta$ for $0\le j\le J$ (so $a_J\ge A+1+\delta$).

\bigskip

\noindent  {\it \underline{On the treatment of negative energies}.} Before starting the proof, let us isolate and
explain in detail a single mechanism, used twice below, which handles the pairs $(\sigma,\sigma')$
for which one or both of the recentred energies $ \widetilde X_k(\sigma)$, $ \widetilde X_k(\sigma')$ are
negative --- the two-replica counterpart of the term $S^-$ in the proof of
Proposition \ref{prop:upper}.
In the one-replica proof, the negative-energy term $S^-=\sum_{\sigma:X_n(\sigma)<0} \ee^{\beta
X_n(\sigma)}$ was controlled  {\it without any probabilistic input} : since $\beta>0$ and
$X_n(\sigma)<0$, every summand satisfies $ \ee^{\beta X_n(\sigma)}\le 1$, so that $S^-$ is bounded
by the cardinality of the layer, $S^-\le\#\Sigma_n(m)\le  \ee^{ns(m)}$ --- a pure counting
(entropy) bound, coming for free from Step 2(b) of that proof. We want an analogous statement
here : pairs $(\sigma,\sigma')$ of a prescribed type ${\bf q}$ with, say, $ \widetilde X_k(\sigma)<0$, should be
controlled by the count of such pairs (irrespective of the value of $ \widetilde X_k(\sigma')$),
weighted by the trivial bound $ \ee^{\beta  \widetilde X_k(\sigma)}\le 1$ on the negative coordinate.

A naive way to obtain this would be to split the argument into the four cases
(positive/positive, positive/negative, negative/positive, negative/negative) and, in the three
cases involving a negative coordinate, redo by hand a one-sided version of the computation below.
This would quadruple the length of the proof for no mathematical gain. Instead we absorb all four
cases into a single, uniform argument by adjoining to the finite grid $\{a_0,a_1,\dots,a_J\}$,
with $a_j=j\delta$, one extra  {\it formal} symbol, denoted $*$, which will always carry the
numerical value $a_*:=0$, but will index a  {\it different} subset of $\mathbb R$ than $a_0$ does.

\medskip

\noindent  {\it \underline{Extended grid}.} Set $\mathcal J:=\{*,0,1,\dots,J\}$ and $a_*:=0$. Partition
$\mathbb R$ into the $J+2$ intervals
\[
I_* := (-\infty,0), \qquad I_j := [a_j,a_{j+1}) \ \ (0\le j\le J-1), \qquad I_J := [a_J,\infty),
\]
indexed by $\mathcal J$ so that every
pair $(\sigma,\sigma')$ has $\left( \widetilde X_k(\sigma)/N, \widetilde X_k(\sigma')/N\right)$ lying in
exactly one {\it cell} of the form $I_{j_1}\times I_{j_2}$, with $(j_1,j_2)\in\mathcal J^2$.

\medskip

\noindent  {\it \underline{Extended counting function}.} For $(j_1,j_2)\in\mathcal J^2$, redefine
\begin{align*}
 \widetilde N_{k,n}({\bf q};a_{j_1},a_{j_2}) := \# \Bigl\{(\sigma,\sigma')\in\Sigma_n^2 :\ &|\sigma\wedge\sigma'|=k,\ {\bf q}(\sigma,\sigma')={\bf q}, \\
& \widetilde X_k(\sigma)\ge a_{j_1}N \ \text{ {\it (only if $j_1\ne *$)}},\\
& \widetilde X_k(\sigma')\ge a_{j_2}N \ \text{ {\it (only if $j_2\ne *$)}} \Bigr\}.
\end{align*}
That is : whenever an index equals $*$, the corresponding lower-bound constraint is  {\it dropped} --- not replaced by ``$\ge 0$''.

\medskip

\noindent  {\it \underline{The extended first-moment bound}.} We claim that Lemma \ref{lem:firstmoment}'s conclusion extends
verbatim to the whole of $\mathcal J^2$ :
\begin{equation}
\label{eq:extended_Ntilde}
\mathbb E\left[ \widetilde N_{k,n}({\bf q};a_{j_1},a_{j_2})\right] \ \le\ \exp \left\{(k+1)\log 2 +
N\left(S({\bf q})-\tfrac{a_{j_1}^2+a_{j_2}^2}{2}\right)\right\}, \qquad \forall (j_1,j_2)\in\mathcal J^2,
\end{equation}
 {\it with the convention that $a_{j_i}^2$ is replaced by $a_*^2=0$ when $j_i=*$.} Indeed, revisit
the proof of Lemma \ref{lem:firstmoment} : it bounds the number of admissible  {\it combinatorial} pairs of type ${\bf q}$
by $2^{k+1} \ee^{NS({\bf q})}$ (this count does not involve $ \widetilde X_k(\sigma)$ or $ \widetilde X_k(\sigma')$ at
all), and then multiplies by the probability that the (independent) Gaussian variables $ \widetilde X_k(\sigma)\sim\mathcal N(0,N)$ and $ \widetilde X_k(\sigma')\sim\mathcal
N(0,N)$ satisfy the two tail constraints, using $\mathbb P( \widetilde X_k(\sigma)\ge a_1 N)\le
 \ee^{-Na_1^2/2}$ and likewise for $\sigma'$. If the constraint on, say, the first coordinate is
simply dropped ($j_1=*$), the corresponding probability factor becomes $\mathbb P(\text{no
constraint})=1$ --- and $1= \ee^{-N\cdot 0^2/2}$ is  {\it exactly} the number that the Gaussian tail
bound would have produced for $a_1=0$. So the same computation as in Lemma \ref{lem:firstmoment}'s proof, with this
one factor replaced by $1$ instead of $ \ee^{-Na_1^2/2}$, gives precisely Equation \eqref{eq:extended_Ntilde}, with $a_{j_1}=0$.

\medskip

\noindent {\it \underline{Step 1. A good event}.} Recall the extended grid $\mathcal J:=\{*,0,\dots,J\}$ and
that  Equation \eqref{eq:extended_Ntilde} is valid for every $(j_1,j_2)\in\mathcal J^2$, with $\#\mathcal
J^2\le\widetilde C/\delta^2$.
 Let $\widetilde{\mathcal E}_n$ be the
event that, for every $0\le k\le\varepsilon n$, ${\bf q} \in\mathcal P_N$, $(j_1,j_2)\in\mathcal J^2$,
\begin{align*}
\text{(i)}&\quad \widetilde N_{k,n}({\bf q} ;a_{j_1},a_{j_2})\le\exp\left\{(k+1)\log2+N\left(S({\bf q} )
-\frac{a_{j_1}^2+a_{j_2}^2}{2}+\delta\right)\right\},\\
\text{(ii)}&\quad \frac{1}{2}(a_{j_1}^2+a_{j_2}^2)\ge S({\bf q} )+\delta
\ \Longrightarrow\ \widetilde N_{k,n}({\bf q} ;a_{j_1},a_{j_2})=0.
\end{align*}
By Equation \eqref{eq:extended_Ntilde} and Markov's inequality, (i) fails with probability $\le \ee^{-N\delta}\le
\ee^{-3n\delta/4}$ by Equation \eqref{eq:N-lower}. Again, by Equation \eqref{eq:extended_Ntilde}, (ii) fails with probability $\le\exp\{(k+1)\log2
-N\delta\}$, and using $k+1\le\varepsilon n+1\le\frac{n\delta}{8\log2}+1$ and Equation \eqref{eq:N-lower}, we get
$(k+1)\log2-N\delta\le\frac{n\delta}8+\log2-\frac{3n\delta}4\le-\frac{n\delta}2$, for $n$ large enough. Hence
both fail with probability $\le \ee^{-n\delta/2}$. A union bound over the $\le(n+1)^4 \widetilde C/\delta^2$ triples
$(k,{\bf q} ,(j_1,j_2))$ (using Equation \eqref{eq:PN-count} and $\#\mathcal J^2\le \widetilde C/\delta^2$) gives
\begin{equation}
\label{eq:En-bound}
\P(\widetilde{\mathcal E}_n^c)\le \frac{\widetilde Cn^4}{\delta^2}\,\ee^{-n\delta/2}, \qquad \forall \, n\ge n_3(\beta,\delta).
\end{equation}

\medskip

\noindent  {\it \underline{Step 2. Bounding $W_{k,n}(q)$ on $\widetilde{\mathcal E}_n$}.} Fix $k$ and ${\bf q}  \in \mathcal P_N$. Observe that
\[
\ee^{-\beta hN(m_1({\bf q} )+m_2({\bf q} ))} \, W_{k,n}({\bf q} )=\sum_{\substack{(\sigma,\sigma')\in\Sigma_n^2:\ |\sigma\wedge\sigma'|=k\\
{\bf q}(\sigma,\sigma')={\bf q}}} \ee^{\beta(\widetilde X_k(\sigma)+\widetilde X_k(\sigma'))}.
\]
Using the partition of $\R^2$ with the {\it cells} $(I_{j_1}\times I_{j_2})_{j_1,j_2\in\mathcal J}$, one may decompose the previous sum into sums $\Sigma_{j_1,j_2}$ given according to the {\it cell} $I_{j_1} \times I_{j_2}$ (with $j_1,j_2\in\mathcal J$) containing
$(\widetilde X_k(\sigma)/N,\widetilde X_k(\sigma')/N)$. Using (i),
\[
\Sigma_{j_1,j_2}\le\exp\left\{(k+1)\log2+N\left(S({\bf q} )-\frac{a_{j_1}^2+a_{j_2}^2}{2}+\beta(a_{j_1}+a_{j_2})
+(2\beta+1)\delta\right)\right\}, \quad \forall \, j_1,j_2\in\mathcal J.
\]
Cells with $j_1=J$ or $j_2=J$ are empty by (ii), since $a_J^2/2\ge(A+1)^2/2=A^2/2+A+\frac{1}{2}\ge
S({\bf q} )+1>S({\bf q} )+\delta$. On a surviving cell, (ii) gives $\frac{1}{2}(a_{j_1}^2+a_{j_2}^2)<S({\bf q} )+\delta$, so
by  Equation \eqref{eq:g2beta-def} applied with $S({\bf q} )+\delta$ and then Equation \eqref{eq:g2-subadd},
\[
S({\bf q} )-\frac{a_{j_1}^2+a_{j_2}^2}{2}+\beta(a_{j_1}+a_{j_2})\le g_\beta^{(2)}(S({\bf q} )+\delta)-\delta
\le g_\beta^{(2)}(S({\bf q} ))+2\beta\sqrt\delta.
\]
Summing over the $\le  \widetilde C/\delta^2$ nonempty cells, we get 
\[
W_{k,n}({\bf q} )\le\frac{\widetilde C}{\delta^2}\exp\left\{(k+1)\log2+N\left(\Xi_{\beta,h}({\bf q} )+2\beta\sqrt\delta+(2\beta+1)
\delta\right)\right\}.
\]

\medskip

\noindent  {\it \underline{Step 3. Conclusion}.} By Equation \eqref{eq:N-lower} and $\varepsilon\le\delta/(8\log2)$, we have $(k+1)\log2\le
N\delta/6+\log2$. Dividing by $N$ and taking logarithms yield
\[
\frac1N\log W_{k,n}({\bf q} )\le\Xi_{\beta,h}({\bf q} )+2\beta\sqrt\delta+(2\beta+2)\delta
+\frac{\log(2 \widetilde C\delta^{-2})}N.
\]
Finally choosing $n_3=n_3(\beta,\delta)$ large enough such that $\log(2 \widetilde C\delta^{-2})/N \le \delta$ for all $n \ge n_3$ (recall that $N\ge 3n/4$, see Equation \eqref{eq:N-lower}), and then 
$\widetilde \kappa=\widetilde \kappa(\beta)$ large enough so that 
$
2\beta\sqrt\delta+(2\beta+3)\delta
\le \widetilde \kappa\sqrt\delta
$
for all $\delta \in (0,1)$ (take $\widetilde \kappa  \ge 4\beta+3$ for example)
and
$
\widetilde \kappa \ge \widetilde C,
$
yield
\[
\frac1N\log W_{k,n}({\bf q} )\le \Xi_{\beta,h}({\bf q} )+\widetilde \kappa\sqrt\delta, \qquad \forall \, n \ge n_3.
\]
Together with
Equation \eqref{eq:En-bound}, this proves the proposition.
\end{proof}

\subsubsection*{The two-replica variational problem}

\medskip

\begin{prop}[Two-replica variational problem]
\label{prop:variational}
Fix $h>0$, $\beta>\beta_c(h)>0$, recall $m^*=m^*(h)=\tanh(\beta_c h)$ and the definition of the free energy $f(\beta,h)$ from Theorem \ref{thm:theoremB}.
\begin{itemize}
\item[(i)] We have
\[
\max_{{\bf q} \in\mathcal P}\Xi_{\beta,h}({\bf q})=2f(\beta,h),
\]
where the $\max$ is attained at a unique 
${\bf q}^* = {\bf q}^*(h) \in \mathcal P$ with
\[
q^*_{ab}=\left(\frac{1+am^*}{2}\right) \left(\frac{1+bm^*}{2}\right), \qquad \forall \, a,b \in \{-1,1\}.
\]
In particular $\rho({\bf q}^*)=(m^*)^2$.

\item[(ii)] For every $\eta\in(0,1)$,
\begin{equation}
 \label{eq:c2-def}
c_2=c_2(\beta,h,\eta):=2f(\beta,h)-\max\left\{\Xi_{\beta,h}({\bf q}):\ {\bf q}\in\mathcal P,\
|\rho({\bf q})-(m^*)^2|\ge\eta\right\}>0.
\end{equation}
\end{itemize}
\end{prop}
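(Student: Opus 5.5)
The strategy is to reduce the two-replica problem to the one-replica variational problem of Proposition \ref{prop:var}, using entropy subadditivity \eqref{eq:subadd}, the identity \eqref{eq:g2-identity}, and concavity of $g_\beta$.

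\textbf{Upper bound.} Fix ${\bf q}\in\mathcal P$ and write $m_i=m_i({\bf q})$. Since $g_\beta^{(2)}$ is nondecreasing (by \eqref{eq:g2-identity} and \eqref{eq:gbeta-props}), \eqref{eq:subadd} gives
\[
\Xi_{\beta,h}({\bf q})\le \beta h(m_1+m_2)+2g_\beta\left(\tfrac{s(m_1)+s(m_2)}{2}\right).
\]
Concavity of $g_\beta$ then yields
\[
2g_\beta\left(\tfrac{s(m_1)+s(m_2)}{2}\right)\le g_\beta(s(m_1))+g_\beta(s(m_2)).
\]
Combining the two,
\[
\Xi_{\beta,h}({\bf q})\le \Phi_h(\beta,m_1)+\Phi_h(\beta,m_2)\le 2\Phi_h(\beta,m^*)=2f(\beta,h)
\]
by Proposition \ref{prop:var}(iii).

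\textbf{Attainment.} At ${\bf q}^*$ both marginals equal $m^*$ and ${\bf q}^*$ is a product type, so $S({\bf q}^*)=2s(m^*)$. Hence
\[
\Xi_{\beta,h}({\bf q}^*)=2\beta hm^*+2g_\beta(s(m^*))=2\Phi_h(\beta,m^*)=2f(\beta,h),
\]
and the maximum is $2f(\beta,h)$.

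\textbf{Uniqueness.} Suppose $\Xi_{\beta,h}({\bf q})=2f(\beta,h)$. Then every inequality in the chain above is an equality. Equality in $\Phi_h(\beta,m_1)+\Phi_h(\beta,m_2)\le 2\Phi_h(\beta,m^*)$ forces $m_1=m_2=m^*$, by uniqueness of the maximizer in Proposition \ref{prop:var}(iii). With $m_1=m_2$, the concavity step is trivially an equality, so this step adds nothing.

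The step that needs care is the monotone step $g_\beta^{(2)}(S({\bf q}))\le g_\beta^{(2)}(s(m_1)+s(m_2))$, where equality must force $S({\bf q})=2s(m^*)$. This holds because $g_\beta$ is \emph{strictly} increasing by \eqref{eq:gbeta-props}. Equality in \eqref{eq:subadd} then forces ${\bf q}$ to be the product type, i.e. ${\bf q}={\bf q}^*$ by Lemma \ref{lem:subadditivity}. Finally, $\rho({\bf q}^*)=m_1m_2=(m^*)^2$.

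\textbf{Part (ii).} This is a compactness argument. The function ${\bf q}\mapsto \Xi_{\beta,h}({\bf q})$ is continuous on the compact simplex $\mathcal P$, because $S$ is continuous (with $0\log0=0$), $g_\beta$ is continuous, and $m_1,m_2$ are linear. The set $\{{\bf q}\in\mathcal P:|\rho({\bf q})-(m^*)^2|\ge\eta\}$ is closed, hence compact, since $\rho$ is linear.

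If this set is empty, the maximum over it is $-\infty$ by convention and $c_2=+\infty>0$. Otherwise the maximum is attained at some $\bar{\bf q}$. Since $\rho(\bar{\bf q})\neq(m^*)^2=\rho({\bf q}^*)$, we have $\bar{\bf q}\ne{\bf q}^*$. By the uniqueness in (i), $\Xi_{\beta,h}(\bar{\bf q})<2f(\beta,h)$, so $c_2>0$.

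The main point to double-check is the uniqueness chain, in particular that strict monotonicity of $g_\beta^{(2)}$ holds at the relevant values of $S$. This follows from \eqref{eq:g2-identity}.
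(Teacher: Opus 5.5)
Your upper bound contains a reversed inequality. For a concave function, Jensen's inequality gives $2g_\beta\bigl(\tfrac{x+y}{2}\bigr)\ge g_\beta(x)+g_\beta(y)$, not $\le$. So your step $2g_\beta\bigl(\tfrac{s(m_1)+s(m_2)}{2}\bigr)\le g_\beta(s(m_1))+g_\beta(s(m_2))$ is unjustified, and the bound you derive from it, $\Xi_{\beta,h}({\bf q})\le\Phi_h(\beta,m_1)+\Phi_h(\beta,m_2)$, is false.

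Here is a counterexample. Take $q_{+1,+1}=q_{+1,-1}=\tfrac12$ and $q_{-1,\pm1}=0$, so that $m_1=1$, $m_2=0$ and $S({\bf q})=\log 2$. Take $\beta>\sqrt{2\log 2}$; such $\beta$ exceed $\beta_c(h)$. Then $g_\beta(S)=\beta\sqrt{2S}$ on $[0,\log 2]$, hence $\Xi_{\beta,h}({\bf q})=\beta h+2\beta\sqrt{\log 2}$. On the other hand $\Phi_h(\beta,1)+\Phi_h(\beta,0)=\beta h+\beta\sqrt{2\log 2}$, which is strictly smaller.

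The failure is structural. The joint constraint $a_1^2+a_2^2\le 2S({\bf q})$ in $g^{(2)}_\beta$ lets the two replicas share a single entropy budget. So $\Xi_{\beta,h}$ is not dominated by the sum of the one-replica functionals $\Phi_h(\beta,m_i)$. Your uniqueness argument extracts $m_1=m_2=m^*$ from equality in $\Phi_h(\beta,m_1)+\Phi_h(\beta,m_2)\le 2\Phi_h(\beta,m^*)$, so it collapses as well.

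The fix, which is what the paper does, is to apply concavity to $s$ rather than to $g_\beta$. Set $\bar m:=(m_1+m_2)/2$. Concavity of $s$ gives $\tfrac{s(m_1)+s(m_2)}{2}\le s(\bar m)$. Monotonicity of $g_\beta$ then yields $\Xi_{\beta,h}({\bf q})\le\beta h(m_1+m_2)+2g_\beta(s(\bar m))=2\Phi_h(\beta,\bar m)\le 2f(\beta,h)$, by Proposition~\ref{prop:var}(iii).

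The equality case then runs as follows:
\begin{enumerate}
\item Uniqueness in Proposition~\ref{prop:var}(iii) gives $\bar m=m^*$.
\item Strict monotonicity of $g_\beta$ gives $\tfrac{s(m_1)+s(m_2)}{2}=s(\bar m)$. Strict concavity of $s$ then forces $m_1=m_2=m^*$; this is the ingredient your route tried to bypass.
\item Strict monotonicity once more gives $S({\bf q})=s(m_1)+s(m_2)$, so ${\bf q}={\bf q}^*$ by the equality case of \eqref{eq:subadd}.
\end{enumerate}

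Your attainment computation and your compactness argument for (ii) are fine as written. One small point: the set in (ii) is never empty for $\eta\in(0,1)$, since the anti-diagonal type has $\rho=-1$, so the $-\infty$ convention is unnecessary.
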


\medskip

\begin{proof}
{\it \underline{Proof of  (i)}.} For ${\bf q}\in\mathcal P$, write $m_i=m_i({\bf q})$ for $i=1,2$ and define $\bar m:=(m_1+m_2)/2$. By Equation \eqref{eq:g2-identity},
Equation \eqref{eq:subadd} and monotonicity of $g_\beta$, then concavity of $s$ and monotonicity of $g_\beta$,
then the definition of $\Phi_h$ (see Equation \eqref{eq:Phi}),
\begin{eqnarray}
\Xi_{\beta,h}({\bf q})&=&\beta h(m_1+m_2)+2g_\beta\left(\frac{S({\bf q})}{2}\right)
\le\beta h(m_1+m_2)+2g_\beta\left(\frac{s(m_1)+s(m_2)}{2}\right)
\label{eq:chain1}
\\
&\le&\beta h(m_1+m_2)+2g_\beta\left(s(\bar m)\right)=2\Phi_h(\beta,\bar m), 
\label{eq:chain2}
\end{eqnarray}
so $\Xi_{\beta,h}({\bf q})\le2\Phi_h(\beta,\bar m)\le2f(\beta,h)$ by Proposition \ref{prop:var}(iii). For ${\bf q}={\bf q}^*$,
$m_1=m_2=m^*$ and $S({\bf q}^*)=2s(m^*)$ (equality case of Equation \eqref{eq:subadd}), so
$\Xi_{\beta,h}({\bf q}^*)=2\Phi_h(\beta,m^*)=2f(\beta,h)$ : the maximum is $2f(\beta,h)$, attained at ${\bf q}^*$.

\medskip

\noindent {\it \underline{Uniqueness}.} If $\Xi_{\beta,h}({\bf q})=2f(\beta,h)$, all inequalities in Equations \eqref{eq:chain1} and  \eqref{eq:chain2}  are
equalities and $\Phi_h(\beta,\bar m)=f(\beta,h)$, forcing $\bar m=m^*$ (see Proposition \ref{prop:var}(iii)). Strict
monotonicity (injectivity) of $g_\beta$ applied to the second inequality forces
$\frac{s(m_1)+s(m_2)}2=s(\bar m)$, which forces $m_1=m_2=\bar m=m^*$  ($s$ is strictly concave).
Injectivity of $g_\beta$ applied to the first forces $S({\bf q})=s(m_1)+s(m_2)$, i.e.\ ${\bf q}$ is a {\it product type}, by the
equality case of Equation \eqref{eq:subadd}. With $m_1=m_2=m^*$, this means ${\bf q}={\bf q}^*$.

\medskip

\noindent
{\it \underline{Proof of  (ii)}.} 
The function ${\bf q} \mapsto (m_1({\bf q}),m_2({\bf q}),S({\bf q}))$ is clearly continuous on the compact set $\mathcal P$, hence so is
$\Xi_{\beta,h}$ by Properties \eqref{eq:gbeta-props} and Equation \eqref{eq:g2-identity}. The set $K_\eta:=\{{\bf q}\in\mathcal P:|\rho({\bf q})-(m^*)^2|\ge\eta\}$ is closed, hence compact,
and non-empty for $\eta\in(0,1)$ : the anti-diagonal type
$q_{+-}=q_{-+}=1/2$ has $\rho(q)=-1$, so
$|\rho(q)-(m^*)^2|=1+(m^*)^2>1>\eta$. Moreover ${\bf q}^*\notin K_\eta$. Therefore, $\Xi_{\beta,h}$ attains its $\max$ over $K_\eta$ at some ${\bf q}_\eta$ with
$\Xi_{\beta,h}({\bf q}_\eta)<2f(\beta,h)$ by uniqueness in (i). This is Equation \eqref{eq:c2-def}.
\end{proof}

\begin{rem}[Mechanism]
Each replica may separately reach $m^*$, but the number of pairs realizing a joint type $q$ grows
like $\ee^{NS({\bf q})}$, strictly below $\ee^{N(s(m_1)+s(m_2))}$ unless ${\bf q}$ is a {\it product type} (see Lemma \ref{lem:subadditivity}). Because the
energy a pair can carry is capped by its joint entropy ($a_1^2+a_2^2\le2S({\bf q})$) and $g_\beta^{(2)}$ is
strictly increasing in $S({\bf q})$, any non-product type is exponentially penalized (see the rate function $\Xi_{\beta,h}({\bf q})$ defined in Equation \eqref{eq:Xi-def}) and the optimal type is
the product of two $m^*$-magnetized marginals, of overlap $(m^*)^2$. 
\end{rem}

\subsection{Strategy of the proof}

From here on, we fix $h>0$ and $\beta>\beta_c(h)>0$. Let $\varphi:[-1,1]\to\R$ be continuous, $\varepsilon\in(0,1/2)$. Decompose
\begin{equation}
\label{eq:decomposition}
\mathbb E\left[\mathcal G_{\beta,h,n}^{\otimes2}\varphi(r_n)\right]=A_n(\varepsilon)+B_n(\varepsilon)+C_n(\varepsilon),
\end{equation}
where 
\begin{eqnarray*}
A_n(\varepsilon) & := & \mathbb E[\mathcal G_{\beta,h,n}^{\otimes2}\varphi(r_n)\mathbf 1_{\{q_n\ge1-\varepsilon\}}],
\\
B_n(\varepsilon) & := & \mathbb E[\mathcal G_{\beta,h,n}^{\otimes2}\varphi(r_n)\mathbf 1_{\{q_n\le\varepsilon\}}],
\\
C_n(\varepsilon) &:= & \mathbb E[\mathcal G_{\beta,h,n}^{\otimes2}\varphi(r_n)\mathbf 1_{\{\varepsilon<q_n<1-\varepsilon\}}].
\end{eqnarray*}
We show :
\begin{itemize}
\item[(i)] $C_n(\varepsilon)\to0$ as $n\to\infty$, for every $\varepsilon>0$ (Lemma \ref{lem:negligible} from
Theorem \ref{thm:theoremC}),
\item[(ii)] $A_n(\varepsilon)\to\varphi(1)(1-\beta_c/\beta)$ as $n\to\infty$, then $\varepsilon\to0$
(Lemma  \ref{lem:highoverlap}),
\item[(iii)] $B_n(\varepsilon)\to\varphi((m^*)^2)\,\beta_c/\beta$ as $n\to\infty$, then $\varepsilon\to0$
(Lemma  \ref{lem:lowoverlap}, resting on Subsection \ref{subsec:2replica}).
\end{itemize}
Summing the three contributions and using weak convergence via continuous bounded test functions
then yields Theorem \ref{thm:theorem2}, cf. Subsection \ref{subsec:conclusion2}.

\subsection{Negligibility of the intermediate region}

\medskip

\begin{lem}
\label{lem:negligible}
For every $\varepsilon\in(0,1/2)$, 
\[
\lim_{n\to\infty}\mathbb E\left[\mathcal G_{\beta,h,n}^{\otimes2}\{q_n\in(\varepsilon,
1-\varepsilon)\}\right]=0.
\]
\end{lem}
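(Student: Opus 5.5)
This follows from Theorem \ref{thm:theoremC} via the Portmanteau theorem. Let $\mu_n$ be the law of $q_n(\sigma,\sigma')$ under $\mathbb E[\mathcal G_{\beta,h,n}^{\otimes 2}\{\,\cdot\,\}]$. This is a probability measure on $[0,1]$. Theorem \ref{thm:theoremC} says that $\mu_n$ converges weakly to
\[
\mu:=\frac{\beta_c(h)}{\beta}\delta_0+\left(1-\frac{\beta_c(h)}{\beta}\right)\delta_1 .
\]

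\textbf{Key step.} Fix $\varepsilon\in(0,1/2)$ and the open interval $O:=(\varepsilon,1-\varepsilon)$. Its complement $F:=[0,\varepsilon]\cup[1-\varepsilon,1]$ is closed in $[0,1]$ and contains both atoms $0$ and $1$ of $\mu$. Hence $\mu(F)=1$.

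\textbf{Conclusion.} The Portmanteau theorem (closed-set form) gives
\[
\liminf_n \mu_n(F)\ \ge\ \limsup_n\mu_n(F)\ \ge\ \mu(F)=1,
\]
so $\mu_n(F)\to1$. Therefore $\mu_n(O)=1-\mu_n(F)\to0$, which is exactly the claim.

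\textbf{Alternative without Portmanteau.} One can instead use a continuous test function directly. Take $\chi$ continuous on $[0,1]$ with $0\le\chi\le1$, $\chi=1$ on $[\varepsilon,1-\varepsilon]$ and $\chi(0)=\chi(1)=0$; since $\varepsilon<1/2$, a trapezoid supported in $(\varepsilon/2,1-\varepsilon/2)$ works. Then $\mathbf 1_O\le\chi$, so
\[
\mu_n(O)\le\int\chi\,d\mu_n\to\int\chi\,d\mu=0 .
\]

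\textbf{Main obstacle.} The only subtlety is the precise meaning of the convergence in Theorem \ref{thm:theoremC}: it must be weak convergence of these averaged laws. It applies because $q_n$ takes values in the compact set $[0,1]$, so these laws are tight and weak convergence is tested against continuous bounded functions. Nothing model-specific is needed beyond Theorem \ref{thm:theoremC} itself.
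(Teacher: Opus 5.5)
\textbf{The main argument fails; your Alternative paragraph is a correct proof.}

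\textbf{The gap.} Both inequalities in your displayed chain point the wrong way. One always has $\liminf_n \mu_n(F)\le\limsup_n\mu_n(F)$. The closed-set form of the Portmanteau theorem gives $\limsup_n\mu_n(F)\le\mu(F)$, not $\ge$. Your set $F=[0,\varepsilon]\cup[1-\varepsilon,1]$ contains both atoms of $\mu$, so the closed-set form yields only the trivial bound $\limsup_n\mu_n(F)\le 1$. Reversing the signs does not help.

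A lower bound on $\mu_n(F)$ needs a different tool. One option is the open-set form applied to an open set containing the atoms, e.g.\ $[0,\varepsilon)\cup(1-\varepsilon,1]$, which is open relative to $[0,1]$. Another is to note that $O=(\varepsilon,1-\varepsilon)$ is a $\mu$-continuity set, since its boundary $\{\varepsilon,1-\varepsilon\}$ carries no $\mu$-mass.

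\textbf{The Alternative is correct and matches the paper.} Your function $\chi$ satisfies $\mathbf{1}_O\le\chi$ and $\int\chi\,d\mu=0$, so $\mu_n(O)\le\int\chi\,d\mu_n\to 0$; this is a complete proof. The paper applies the closed-set Portmanteau inequality to the closure $[\varepsilon,1-\varepsilon]$ of $O$, which avoids both atoms. This gives $\limsup_n\mu_n(O)\le\limsup_n\mu_n([\varepsilon,1-\varepsilon])\le\mu([\varepsilon,1-\varepsilon])=0$. Your trapezoid $\chi\ge\mathbf{1}_{[\varepsilon,1-\varepsilon]}$ with $\chi(0)=\chi(1)=0$ is exactly the Urysohn-type function that proves that direction of Portmanteau.

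\textbf{Fix.} Either drop the Key step and Conclusion, or apply the closed-set form to $[\varepsilon,1-\varepsilon]$ rather than to its complement.
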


\medskip

\begin{proof}
Let $\mu_n:=\mathbb E[\mathcal G_{\beta,h,n}^{\otimes2}\{q_n\in\cdot\}]$, which is a probability measure on $[0,1]$. By
Theorem \ref{thm:theoremC}, $\mu_n$ converges weakly to $\mu:=\frac{\beta_c}\beta\delta_0+(1-\frac{\beta_c}\beta)\delta_1$. Since
$F:=[\varepsilon,1-\varepsilon]$ is closed, the portmanteau theorem gives $\limsup_n\mu_n((\varepsilon,1-\varepsilon))\le
\limsup_n\mu_n(F)\le\mu(F)=0$, since $0,1\notin F$.
\end{proof}

\subsection{High overlap I implies high overlap II}

\medskip

\begin{lem}
\label{lem:highoverlap}
For every $\varepsilon\in(0,1/2)$ and $\sigma,\sigma'\in\Sigma_n$, $q_n(\sigma,\sigma')\ge1-\varepsilon
\Rightarrow r_n(\sigma,\sigma')\ge1-2\varepsilon$. Consequently, for every continuous $\varphi:[-1,1]\to\R$,
\[
\lim_{\varepsilon\to0}\limsup_{n\to\infty}\left|A_n(\varepsilon)-\varphi(1)(1-\beta_c/\beta)\right|=0.
\]
\end{lem}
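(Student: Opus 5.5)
The plan has two parts: a deterministic combinatorial inequality, then a limit argument that uses Lemma \ref{lem:negligible} and Theorem \ref{thm:theoremC}.

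\emph{Deterministic part.} Let $q_n(\sigma,\sigma')\ge 1-\varepsilon$. If $\sigma=\sigma'$ then $r_n=1$ and there is nothing to prove. Otherwise set $k=|\sigma\wedge\sigma'|=nq_n$, so that $k\ge(1-\varepsilon)n$. The first $k$ coordinates agree, so they contribute $k/n$ to $r_n$. The remaining $n-k$ coordinates each contribute at least $-1/n$. Hence
\[
r_n\ge \frac{k}{n}-\frac{n-k}{n}=\frac{2k}{n}-1\ge 1-2\varepsilon .
\]

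\emph{Limit part.} Since $\varphi$ is uniformly continuous on $[-1,1]$, set $\omega(\varepsilon):=\sup\{|\varphi(x)-\varphi(1)|: x\in[1-2\varepsilon,1]\}$, which tends to $0$ as $\varepsilon\to0$. By the deterministic part, on $\{q_n\ge1-\varepsilon\}$ we have $|\varphi(r_n)-\varphi(1)|\le\omega(\varepsilon)$. Therefore
\[
\left|A_n(\varepsilon)-\varphi(1)\,\mu_n([1-\varepsilon,1])\right|\le\omega(\varepsilon),
\qquad \mu_n:=\E\left[\cG_{\beta,h,n}^{\otimes2}\{q_n\in\cdot\}\right].
\]

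It remains to show $\mu_n([1-\varepsilon,1])\to 1-\beta_c/\beta$ for fixed $\varepsilon\in(0,1/2)$. Weak convergence alone does not give this, because $1$ is an atom of the limit. I would proceed as follows.

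\begin{enumerate}
\item Apply the portmanteau theorem to the closed set $[1-\varepsilon,1]$: this gives $\limsup_n\mu_n([1-\varepsilon,1])\le 1-\beta_c/\beta$.
\item Apply it to the closed set $[0,\varepsilon]$: this gives $\limsup_n\mu_n([0,\varepsilon])\le\beta_c/\beta$.
\item Lemma \ref{lem:negligible} gives $\mu_n((\varepsilon,1-\varepsilon))\to0$.
\item Since $\mu_n$ is a probability measure on $[0,1]$,
\[
\mu_n([1-\varepsilon,1])=1-\mu_n([0,\varepsilon])-\mu_n((\varepsilon,1-\varepsilon)),
\]
so steps 2 and 3 yield $\liminf_n\mu_n([1-\varepsilon,1])\ge 1-\beta_c/\beta$.
\end{enumerate}

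Steps 1 and 4 give the limit. Consequently $\limsup_n|A_n(\varepsilon)-\varphi(1)(1-\beta_c/\beta)|\le\omega(\varepsilon)$, and letting $\varepsilon\to0$ concludes. The only delicate point is this portmanteau bookkeeping at the atom $1$; everything else is elementary.
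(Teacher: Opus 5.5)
Your proof is correct and follows the paper's: the same deterministic bound $r_n\ge 2k/n-1\ge 1-2\varepsilon$, and the same modulus-of-continuity reduction to showing $\mu_n([1-\varepsilon,1])\to 1-\beta_c/\beta$; only the last step is done differently. The paper gets that limit in one line by noting that, relative to the state space $[0,1]$ on which all the $\mu_n$ live, the boundary of $[1-\varepsilon,1]$ is just $\{1-\varepsilon\}$, so it is a continuity set of the limit. Your remark that weak convergence alone cannot handle the atom at $1$ is therefore inaccurate; equivalently, in $\mathbb{R}$ one can apply portmanteau to the open set $(1-\varepsilon,\infty)$. Your sandwich argument is still perfectly valid: it combines two closed-set portmanteau bounds with Lemma \ref{lem:negligible}, which is itself only a consequence of weak convergence.
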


\medskip

\begin{proof} Fix $\sigma,\sigma'\in\Sigma_n$ such that $q_n(\sigma,\sigma')\ge1-\varepsilon$.
If $\sigma=\sigma'$, it is trivial. Otherwise $k=|\sigma\wedge\sigma'|\ge(1-\varepsilon)n$, so
$r_n\ge\frac{k-(n-k)}n=\frac{2k}n-1\ge1-2\varepsilon$. Let $\omega_\varphi$ be the modulus of continuity of
$\varphi$ (which is bounded, uniformly continuous on the compact $[-1,1]$). Then $|\varphi(r_n)-\varphi(1)|
\mathbf1_{\{q_n\ge1-\varepsilon\}}\le\omega_\varphi(2\varepsilon)$, so, integrating yields
$\left|A_n(\varepsilon)-\varphi(1)\,\mathbb E[\mathcal G_{\beta,h,n}^{\otimes2}\{q_n\ge1-\varepsilon\}]\right|\le
\omega_\varphi(2\varepsilon)$. Since $[1-\varepsilon,1]$, viewed as a subset of the state space
$[0,1]$, is a continuity set of
$\mu:=\frac{\beta_c}\beta\delta_0+(1-\frac{\beta_c}\beta)\delta_1$,\footnote{ \label{footnote:continuity-set}
Relative to $\mathbb R$ one has $\partial_{\mathbb R}[1-\varepsilon,1]=\{1-\varepsilon,1\}$, and
$\mu(\{1\})=1-\beta_c/\beta \neq 0$, for $\beta>\beta_c$; so the boundary must be understood relative to the
 state space $[0,1]$ on which the $\mu_n:=\E[\mathcal G_{\beta,h,n}^{\otimes2}\{q_n\in\cdot\}]$'s and $\mu$
are defined. Since $1$ is the right endpoint of $[0,1]$, it belongs to the interior of
$[1-\varepsilon,1]$ relative to $[0,1]$, so
$\partial_{[0,1]}[1-\varepsilon,1]=\{1-\varepsilon\}$. Since $\varepsilon\in(0,1/2)$, one has
$1-\varepsilon\in(1/2,1)$, hence $1-\varepsilon\notin\{0,1\}$ and $\mu(\partial_{[0,1]}[1-\varepsilon,1])=\mu(\{1-\varepsilon\})=0$, as
required.}
 Theorem \ref{thm:theoremC} gives $\mathbb E[
\mathcal  G_{\beta,h,n}^{\otimes2}\{q_n\ge1-\varepsilon\}]\to1-\beta_c/\beta$ when $n \to \infty$, and $\omega_\varphi(2\varepsilon)\to0$, when $\varepsilon\to0$, concludes the proof.
\end{proof}

\subsection{Low overlap I implies overlap II close to $(m^*)^2$}

This is the key step : no conditioning on the subtrees at the branching point is used. Instead, an
upper bound for the two-replica partition function restricted to {\it bad} types is compared, on an
event of overwhelming probability, with the lower bound of Proposition \ref{prop:lower} for $Z_{\beta,h,n}^2$.

\medskip

\begin{lem}
\label{lem:lowoverlap}
For every $\eta>0$,
\[
\lim_{\varepsilon\to0}\limsup_{n\to\infty}\mathbb E\left[\mathcal G_{\beta,h,n}^{\otimes2}\{|r_n-(m^*)^2|>\eta \, ; \, q_n\le\varepsilon\}\right]=0.
\]
More precisely, we have the stronger result that, for every $\eta>0$, there exists $\varepsilon_0=\varepsilon_0(\beta,h,\eta)>0$
such that the $\limsup_n$ vanishes for every $\varepsilon\in(0,\varepsilon_0)$.
\end{lem}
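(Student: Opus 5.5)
We bound the Gibbs mass of the bad set by a ratio of partition functions and work on an event of overwhelming probability. For a pair with $|\sigma\wedge\sigma'|=k$, split the Hamiltonian at generation $k+1$:
\[
\ee^{\beta H_n(\sigma,h)+\beta H_n(\sigma',h)}=\ee^{\beta H_{k+1}(\sigma|_{k+1},h)+\beta H_{k+1}(\sigma'|_{k+1},h)}\,\ee^{\beta\widetilde H_k(\sigma)+\beta\widetilde H_k(\sigma')}.
\]
Then
\[
\mathcal G^{\otimes2}_{\beta,h,n}\{|r_n-(m^*)^2|>\eta,\ q_n\le\varepsilon\}
\le \frac{1}{Z_{\beta,h,n}^2}\sum_{k\le \varepsilon n}\ \sum_{{\bf q}\in\mathcal P_{N}\,:\,\text{bad}} \ee^{2\beta\sup_{|v|=k+1}H_{k+1}(v,h)}\,W_{k,n}({\bf q}),
\]
with $N=n-k-1$. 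The pair $\sigma=\sigma'$ has $q_n=1>\varepsilon$ and is excluded. Here ${\bf q}$ is called bad if $|\rho({\bf q})-(m^*)^2|\ge\eta/2$.

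This restriction to bad types is justified by Equation \eqref{eq:overlap-type}. If $|r_n-(m^*)^2|>\eta$ and $k\le\varepsilon n$, then
\[
|\rho({\bf q})-(m^*)^2|\ge\eta-2(\varepsilon n+1)/n\ge\eta/2,
\]
provided $\varepsilon\le\eta/8$ and $n$ is large. The degenerate case $N=0$ cannot occur for $k\le\varepsilon n<n-1$.

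Fix $\delta\in(0,1)$ small, to be chosen below, and work on the intersection of three events:
\begin{itemize}
\item the event $\widetilde{\mathcal K}_n(\delta)$ of Lemma \ref{lem:head}, which bounds the head by
\[
\ee^{2\beta\sup_{|v|=k+1}H_{k+1}(v,h)}\le \ee^{2\beta(\Theta_0(\varepsilon n+1)+\delta n)};
\]
\item the event $\mathcal M_n$ of Proposition \ref{prop:unifbound}, which gives $W_{k,n}({\bf q})\le \ee^{N(\Xi_{\beta,h}({\bf q})+\widetilde\kappa\sqrt\delta)}$;
\item the event $\{\frac1n\log Z_{\beta,h,n}\ge f(\beta,h)-\delta\}$ of Proposition \ref{prop:lower}.
\end{itemize}
By Proposition \ref{prop:variational}(ii) applied with $\eta/2$, every bad ${\bf q}$ satisfies $\Xi_{\beta,h}({\bf q})\le 2f-c_2$, where $c_2=c_2(\beta,h,\eta/2)>0$.

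The main obstacle is matching $N(2f-c_2)$ against the lower bound $2n(f-\delta)$, since $N<n$ and $f$ could a priori have either sign. We have $f=\beta\gamma_{\max}>0$ because $\gamma_{\max}=\beta_c+hm^*>0$, so $Nf\le nf$ and the two exponents compare favourably. On the good event, the bad mass is then at most
\[
(\varepsilon n+1)(n+1)^3\exp\big\{n(-c_2+\widetilde\kappa\sqrt\delta+2\delta)+2\beta\Theta_0(\varepsilon n+1)+2\beta\delta n\big\},
\]
using $N\ge 3n/4$ only to absorb $N(-c_2)\le -\tfrac34 nc_2$. Replace $c_2$ by $\tfrac34c_2$ in the exponent accordingly.

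The constants are chosen as follows:
\begin{itemize}
\item pick $\delta$ so that $\widetilde\kappa\sqrt\delta+(2+2\beta)\delta\le c_2/4$;
\item set $\varepsilon_0:=\min\{\eta/8,\ \delta/(8\log2),\ 1/8,\ c_2/(16\beta\Theta_0)\}$.
\end{itemize}
For $\varepsilon<\varepsilon_0$ the exponent is then at most $-nc_2/4+O(1)$, so the bad mass on the good event decays exponentially. The conditions required by Proposition \ref{prop:unifbound} hold for this $\varepsilon$.

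Off the good event we bound the Gibbs probability by $1$. The complement has probability at most
\[
n\,\ee^{-\sqrt{2\log2}\,\delta n}+\widetilde\kappa n^4\delta^{-2}\ee^{-n\delta/2}+\ee^{-n\delta^2/(8\beta^2)},
\]
by Lemma \ref{lem:head}, Proposition \ref{prop:unifbound} and Proposition \ref{prop:lower}. This tends to $0$.

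Taking expectations, the $\limsup_n$ vanishes for every $\varepsilon\in(0,\varepsilon_0)$, which is the stronger statement. The weaker statement, with $\varepsilon\to0$, follows immediately.
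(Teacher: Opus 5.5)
Your proof is correct and follows the paper's argument step for step: the same reduction to bad types via Equation \eqref{eq:overlap-type}, the same good event $\widetilde{\mathcal K}_n\cap\mathcal M_n\cap\{\frac1n\log Z_{\beta,h,n}\ge f(\beta,h)-\delta\}$, and the same comparison against $Z_{\beta,h,n}^2$. The one difference is bookkeeping: you control the factor $N/n$ in front of $\Xi_{\beta,h}({\bf q})$ using $f>0$ and $N\ge 3n/4$, whereas the paper bounds it by $\frac{k+1}{n}\sup_{\mathcal P}|\Xi_{\beta,h}|$ and absorbs that into $\varepsilon_0\le c_2/(8\widetilde C)$ (you should also note, as the paper does, that one may assume $\eta<2$, indeed $\eta\in(0,1)$ by monotonicity, so that Proposition \ref{prop:variational}(ii) applies with $\eta/2$; for $\eta\ge2$ the event is empty since $|r_n-(m^*)^2|\le 1+(m^*)^2<2$).
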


\medskip

\begin{proof}
Since the event $\{|r_n-(m^*)^2|>\eta\,;\,
q_n\le\varepsilon\}$ is non-increasing in  $\eta$, it is enough to prove the claim for $\eta \in(0,1)$. Fix such an $\eta \in(0,1) $.
 Note that $\eta/2\in(0,1)$, so
that Proposition \ref{prop:variational} (ii) applies to it.
Let $c_2=c_2(\beta,h,\eta/2)>0$ be the gap given by Proposition \ref{prop:variational}(ii),
$\widetilde \kappa=\widetilde \kappa(\beta)<\infty$ the constant  from Proposition \ref{prop:unifbound}, $\Theta_0=\Theta_0(h)$ the constant from Lemma \ref{lem:head} and
\begin{equation}
\label{eq:C_tilde}
\widetilde C=\widetilde C(\beta,h):=2\beta\Theta_0+\sup_{{\bf q} \in\mathcal P}|\Xi_{\beta,h}({\bf q})|<\infty,
\end{equation}
since $\Xi_{\beta,h}$ is continuous on the compact $\mathcal P$. Choose $\delta\in(0,1)$ small enough such that
\begin{equation}
\label{eq:delta-choice}
\widetilde \kappa\sqrt\delta+2\beta\delta+2\delta\le \frac{c_2}{8}, 
\end{equation}
and then $\varepsilon_0\in(0,1)$ so that
\begin{equation}
\label{eq:eps0-choice}
\varepsilon_0\le\min\left\{\frac18,\ \frac{\delta}{8\log2},\ \frac\eta{8},\ \frac{c_2}{8\widetilde C}\right\}.
\end{equation}

Fix $\varepsilon\in(0,\varepsilon_0)$ and observe that all constants depend on $(\beta,h,\eta)$ only.

\medskip
\medskip

\noindent {\it \underline{Step 1. Reduction to types}.} Let $(\sigma,\sigma')$ satisfy $q_n\le\varepsilon$ and
$|r_n-(m^*)^2|>\eta$. Then $\sigma\ne\sigma'$, $k:=nq_n\le\varepsilon n$, and ${\bf q}={\bf q}(\sigma,\sigma')\in
\mathcal P_{n-k-1}$ is well defined for $n\ge3$ (as $\varepsilon\le1/8$). By Equation \eqref{eq:overlap-type},
\[
|\rho({\bf q})-(m^*)^2|\ge|r_n-(m^*)^2|-\frac{2(k+1)}n\ge\eta-2\varepsilon-\frac2n\ge\frac\eta2,
\]
for $n\ge n_4(\eta):=\lceil8/\eta\rceil$, using here $\varepsilon\le\eta/8$. Hence, writing, for ${\bf q} \in \mathcal P_{N}$,
\[
Z^{(2)}_{k,n}({\bf q}):=\sum_{\substack{(\sigma,\sigma')\in\Sigma_n^2:\ |\sigma\wedge\sigma'|=k\\
{\bf q}(\sigma,\sigma')={\bf q}}}
\ee^{\beta H_n(\sigma,h)+\beta H_n(\sigma',h)},
\]
we get, for $n\ge\max\{3,n_4(\eta)\}$,
\begin{equation}
\label{eq:step1}
\mathcal G_{\beta,h,n}^{\otimes2}\{|r_n-(m^*)^2|>\eta \, ; \, q_n\le\varepsilon\}\le\frac1{Z_{\beta,h,n}^2} \, 
\sum_{0\le k\le\varepsilon n}\ \sum_{\substack{{\bf q}\in\mathcal P_{n-k-1}\\|\rho({\bf q})-(m^*)^2|\ge\eta/2}}
Z^{(2)}_{k,n}({\bf q}). 
\end{equation}

\medskip

\noindent  {\it \underline{Step 2. A good event}.} Let $\widetilde{\mathcal H}_n:=\widetilde{\mathcal M}_n\cap \widetilde{\mathcal K}_n\cap\{\frac1n\log Z_{\beta,h,n}\ge
f(\beta,h)-\delta\}$, with $\widetilde{\mathcal M}_n$ from Proposition \ref{prop:unifbound} (with this $\delta,\varepsilon$), and $\widetilde{\mathcal K}_n$ from Lemma \ref{lem:head}. By Proposition \ref{prop:unifbound}, Lemma \ref{lem:head} and
Proposition \ref{prop:lower}, there is $n_5=n_5(\beta,h,\delta)$ such that for all $n\ge n_5$,
\begin{equation}
 \label{eq:Hn-bound}
\P(\widetilde{\mathcal H}_n^c)\le\frac{\widetilde \kappa n^4}{\delta^2}\ee^{-n\delta/2}+ n \, \ee^{-\sqrt{2\log2}\,\delta n}
+\ee^{-n\delta^2/(8\beta^2)}\xrightarrow[n\to\infty]{}0.
\end{equation}

\medskip

\noindent  {\it \underline{Step 3. Bounding $Z^{(2)}_{k,n}({\bf q})$}.} Fix $k\le\varepsilon n$, ${\bf q}\in \mathcal P_{n-k-1}$. Splitting each
replica's Hamiltonian at $k+1$ and bounding both heads, namely $H_{k+1}(\sigma|_{k+1},h)$ and $H_{k+1}(\sigma'|_{k+1},h)$, on $ \widetilde{\mathcal K}_n$ by $\Theta_0(k+1)+\delta n$,
Equation \eqref{eq:Wkn-def} gives, on $ \widetilde{\mathcal K}_n$, 
\[
Z^{(2)}_{k,n}({\bf q})\le \ee^{2\beta(\Theta_0(k+1)+\delta n)} \, 
W_{k,n}({\bf q}).
\]
On $\widetilde{\mathcal M}_n$, Proposition \ref{prop:unifbound} gives 
\[
W_{k,n}({\bf q})\le\exp\{(n-k-1)[\Xi_{\beta,h}({\bf q})
+\widetilde \kappa\sqrt\delta]\}.
\]
Therefore, on $\widetilde{\mathcal H}_n$, using $k+1\le\varepsilon n+1$ and
$|\Xi_{\beta,h}({\bf q})|+2\beta\Theta_0\le \widetilde C$ (by Equation \eqref{eq:C_tilde}), we have
\begin{eqnarray*}
\frac1n\log Z^{(2)}_{k,n}({\bf q}) & \le&  \Xi_{\beta,h}({\bf q}) + \widetilde \kappa\sqrt\delta+2\beta\delta + \frac{k+1}{n}\left( |\Xi_{\beta,h}({\bf q})|+2\beta\Theta_0 \right) 
\\
&\le&\Xi_{\beta,h}({\bf q}) + \widetilde \kappa\sqrt\delta+2\beta\delta+\varepsilon \widetilde C+\frac{\widetilde C}n.
\end{eqnarray*}
On the range of ${\bf q}$ appearing in Equation \eqref{eq:step1}, $\Xi_{\beta,h}({\bf q})\le2f(\beta,h)-c_2$ by
Proposition \ref{prop:variational}(ii); with $\widetilde \kappa\sqrt\delta+2\beta\delta\le c_2/8$
(by Equation \eqref{eq:delta-choice}) and $\varepsilon \widetilde C\le\varepsilon_0\widetilde C\le c_2/8$ (see Equation \eqref{eq:eps0-choice}), we get, on $\widetilde{\mathcal H}_n$,
\begin{equation}
\label{eq:step3-bound}
\frac1n\log Z^{(2)}_{k,n}({\bf q})\le 2f(\beta,h)-\frac{c_2}2+\frac{\widetilde C}n. 
\end{equation}

\medskip

\noindent  {\it \underline{Step 4. Conclusion}.} By Equation \eqref{eq:PN-count} the number of pairs $(k,{\bf q})$ in Equation \eqref{eq:step1} is
at most $(n+1)^4$, and $Z_{\beta,h,n}^2\ge \ee^{2n(f(\beta,h)-\delta)}$ on $\widetilde{\mathcal H}_n$. Plugging
Equation \eqref{eq:step3-bound} into Equation \eqref{eq:step1} yields, on $\widetilde{\mathcal H}_n$ and for $n\ge\max\{3,n_4,n_5\}$,
\[
\mathcal G_{\beta,h,n}^{\otimes2}\{|r_n-(m^*)^2|>\eta \, ; \,q_n\le\varepsilon\}\le(n+1)^4\ee^{\widetilde C}
\exp\left\{-n\left(\frac{c_2}2-2\delta\right)\right\}\le(n+1)^4\ee^{\widetilde C}\ee^{-nc_2/4},
\]
using $2\delta\le c_2/4$ (see Equation \eqref{eq:delta-choice}). Splitting on $\widetilde{\mathcal H}_n$ and using Equation \eqref{eq:Hn-bound} implies
\[
\mathbb E\left[\mathcal G_{\beta,h,n}^{\otimes2}\{|r_n-(m^*)^2|>\eta \, ; \,q_n\le\varepsilon\}\right]\le
(n+1)^4\ee^{\widetilde C}\ee^{-nc_2/4}+\P(\widetilde{\mathcal H}_n^c)\xrightarrow[n\to\infty]{}0.
\]
As $\varepsilon\in(0,\varepsilon_0)$ was arbitrary, this proves the lemma.
\end{proof}

\subsection{Proof of Theorem \ref{thm:theorem2}}
\label{subsec:conclusion2}

Let $\varphi:[-1,1]\to\R$ be continuous, with modulus of continuity $\omega_\varphi$. Considering
Equation \eqref{eq:decomposition} we have, by Lemma \ref{lem:negligible}, that $|C_n(\varepsilon)|\le\|\varphi\|_\infty
\,\mathbb E[\mathcal G_{\beta,h,n}^{\otimes2}\{q_n\in(\varepsilon,1-\varepsilon)\}]\to0$ when $n \to \infty$ for every $\varepsilon\in(0,1/2)$ and, by
Lemma \ref{lem:highoverlap}, that $\lim_{\varepsilon\to0}\limsup_n|A_n(\varepsilon)-\varphi(1)(1-\beta_c/\beta)|=0$.

\medskip

For $B_n(\varepsilon)$ : 
let $\eta\in(0,1)$. Splitting $\{q_n\le\varepsilon\}$ according to whether
$|r_n-(m^*)^2|\le\eta$, we get
\[
\left|B_n(\varepsilon)-\varphi\big((m^*)^2\big)\,\mathbb E[\mathcal G_{\beta,h,n}^{\otimes2}\{q_n\le\varepsilon\}]\right|
\le\omega_\varphi(\eta)+2\|\varphi\|_\infty\,\mathbb E\left[\mathcal  G_{\beta,h,n}^{\otimes2}
\{|r_n-(m^*)^2|>\eta,\,q_n\le\varepsilon\}\right].
\]
Since $[0,\varepsilon]$, viewed as a subset of the state space
$[0,1]$, is a continuity set of $\frac{\beta_c}\beta\delta_0+(1-\frac{\beta_c}\beta)\delta_1$ (see Footnote \ref{footnote:continuity-set}),
Theorem \ref{thm:theoremC}  gives $\mathbb E[\mathcal G_{\beta,h,n}^{\otimes2}\{q_n\le\varepsilon\}]\to\beta_c/\beta$. Letting $n\to
\infty$, then $\varepsilon\to0$ and using Lemma  \ref{lem:lowoverlap}, yield
\[
\limsup_{\varepsilon\to0}\limsup_{n\to\infty}\left|B_n(\varepsilon)-\varphi\big((m^*)^2\big)\,\frac{\beta_c}
\beta\right|\le\omega_\varphi(\eta),
\]
and, $\eta\in(0,1)$ being arbitrary with $\omega_\varphi(\eta)\to_{\eta \to 0}0$, the left-hand side vanishes.

Summing the three contributions in Equation \eqref{eq:decomposition} and letting first $n\to\infty$, then
$\varepsilon\to0$,
\[
\lim_{n\to\infty}\mathbb E\left[\mathcal  G_{\beta,h,n}^{\otimes2}\varphi(r_n)\right]=\frac{\beta_c}\beta
\, \varphi\big((m^*)^2\big) +\left(1-\frac{\beta_c}\beta\right)\varphi(1).
\]
As this holds for every continuous $\varphi:[-1,1]\to\R$ and all measures involved are supported on
the compact $[-1,1]$, the sequence $\mathbb E[\mathcal  G_{\beta,h,n}^{\otimes2}\{r_n\in\cdot\}]$ converges
weakly to
\[
\frac{\beta_c(h)}\beta\,\delta_{(m^*)^2}+\left(1-\frac{\beta_c(h)}\beta\right)\delta_1,
\]
which is the statement of Theorem \ref{thm:theorem2}. \hfill$\square$

\appendix

\section{APPENDIX}

\subsection{Many-to-one lemma}
\label{subsec:appendix-1}
\medskip

The following identity is a standard tool in the study of branching
random walks, usually referred to as the {\it many-to-one lemma}. It goes
back to the work of Kahane and Peyri\`ere \cite{kahanepeyriere76} on
Mandelbrot's multiplicative cascades. See Shi \cite[Chapter 1]{shi2015} and
Zeitouni \cite{zeitouni20notes} for a modern, self-contained proof in the i.i.d.\ case, and
Biggins and Kyprianou \cite{bigginskyprianou2004} for the general change-of-measure formulation ---
adapted below to the case of independent, but non-identically distributed,
displacements relevant to $H_n(\cdot,h)$.

\medskip

\begin{lem}[Many-to-one identity]
\label{lem:m21}
For all $(t,\theta)\in\R^2$ and all $n\ge1$,
\[
\E\left[\ \sum_{\sigma\in\Sigma_n}\ee^{\langle(t,\theta),{\bf V}_n(\sigma)\rangle}\right]
=\ee^{n\Lambda(t,\theta)}.
\]
\end{lem}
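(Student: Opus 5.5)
The identity will follow from induction on $n$ via the branching property, or equivalently from a direct path-by-path computation. I take the direct route because it makes the non-identical displacements explicit.

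For each $\sigma\in\Sigma_n$ we have ${\bf V}_n(\sigma)=\sum_{i=1}^n \left(U(\sigma|_i),\sigma_i\right)$. The increments $U(\sigma|_1),\dots,U(\sigma|_n)$ along a single ancestral path are i.i.d. $\mathcal N(0,1)$, since they are attached to distinct vertices, and the spins $\sigma_i$ are deterministic once $\sigma$ is fixed. Linearity of expectation lets us exchange the finite sum over $\sigma$ with $\E$. Independence along the path then gives
\[
\E\left[\ee^{\langle(t,\theta),{\bf V}_n(\sigma)\rangle}\right]=\prod_{i=1}^n \E\left[\ee^{tU(\sigma|_i)}\right]\ee^{\theta\sigma_i}=\ee^{nt^2/2}\,\ee^{\theta\sum_i\sigma_i}.
\]

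Summing over $\sigma\in\Sigma_n=\{-1,1\}^n$, the sum factorizes coordinate by coordinate:
\[
\sum_{\sigma\in\{-1,1\}^n}\prod_{i=1}^n\ee^{\theta\sigma_i}=\left(\ee^{\theta}+\ee^{-\theta}\right)^n .
\]
Multiplying by $\ee^{nt^2/2}$ yields $\left(\ee^{t^2/2+\theta}+\ee^{t^2/2-\theta}\right)^n=\ee^{n\Lambda(t,\theta)}$, by Equation \eqref{eq:Lambda}.

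All quantities are nonnegative and finite for every $(t,\theta)\in\R^2$, so no integrability issue arises. The only point needing care is that correlations between different paths play no role, because only first moments are involved. I expect no real obstacle. If one prefers the tree formulation, conditioning on the first generation gives the recursion $\E[W_{n}]=\ee^{\Lambda(t,\theta)}\E[W_{n-1}]$, where $W_n:=\sum_{\sigma\in\Sigma_n}\ee^{\langle(t,\theta),{\bf V}_n(\sigma)\rangle}$, and induction from $W_0=1$ gives the same conclusion.
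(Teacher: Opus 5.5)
Your proof is correct, and it takes a slightly different route from the paper's. The paper argues by induction on $n$. It splits each $\sigma\in\Sigma_{n+1}$ into a parent $v\in\Sigma_n$ and a last label $\epsilon=\pm1$. Since the fresh Gaussians $U(v\epsilon)$ are independent of $\mathcal F_n$, it gets $\E[W_{n+1}(t,\theta)\mid\mathcal F_n]=\ee^{\Lambda(t,\theta)}W_n(t,\theta)$, and iterates from $W_0=1$. Your fallback recursion is the same idea, conditioning on the first generation instead of the last.

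Your main argument is different. You exchange the finite sum with the expectation and use that every ancestral path carries $n$ i.i.d.\ $\mathcal N(0,1)$ increments while the spin part is deterministic given $\sigma$. You then factorize the purely combinatorial sum $\sum_{\sigma}\ee^{\theta\sum_i\sigma_i}=(2\cosh\theta)^n$.

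This is shorter, and it makes visible the decoupling of the Gaussian and spin coordinates behind the splitting of $\Lambda$ and $\Lambda^*$ in Lemma \ref{lem:rate}. It relies, however, on the genealogy being the fixed binary tree, so that you can sum path by path over a deterministic index set. The paper's recursion is the form that carries over to general branching random walks with random offspring. As a by-product, it also shows that $\ee^{-n\Lambda(t,\theta)}W_n(t,\theta)$ is a martingale.
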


\medskip

\begin{proof}
Let $W_n(t,\theta):=\sum_{\sigma\in\Sigma_n}\ee^{t X_n(\sigma)+\theta\sum_{i\le n}\sigma_i}$
and let $\mathcal F_n$ be the $\sigma$-field generated by
$(U(\sigma))_{|\sigma|\le n}$. Splitting each $\sigma\in\Sigma_{n+1}$ as
$\sigma=\epsilon$ with $v\in\Sigma_n$ and $\epsilon\in\{-1,+1\}$,
\[
W_{n+1}(t,\theta)=\sum_{v\in\Sigma_n}\ee^{tX_n(v)+\theta\sum_{i\le n}v_i}
\sum_{\epsilon=\pm1}\ee^{tU(v \epsilon)+\theta\epsilon}.
\]
The variables $(U(v \epsilon))_{v\in\Sigma_n,\epsilon=\pm1}$ are independent of
$\mathcal F_n$ and i.i.d.\ standard Gaussian random variables, hence
\[
\E\left[\sum_{\epsilon=\pm1}\ee^{tU(v\epsilon)+\theta\epsilon} \; \Big| \;  \mathcal F_n\right]
=\ee^{t^2/2}\left(\ee^{\theta}+\ee^{-\theta}\right)=\ee^{\Lambda(t,\theta)}, \qquad \forall \, v\in\Sigma_n,
\]
so that $\E[W_{n+1}(t,\theta) \, \vert \, \mathcal F_n ]=\ee^{\Lambda(t,\theta)}W_n(t,\theta)$ and $\E[W_{n+1}(t,\theta)]=\ee^{\Lambda(t,\theta)}\E[W_n(t,\theta)]$. Since $W_0=1$, the claim
follows by induction.
\end{proof}

\subsection{Subadditivity of entropy for a pair of empirical types}
\label{app:subadditivity}

We recall here, without proof, the classical information-theoretic
facts used in Equation \eqref{eq:subadd}.
Let ${\bf q} = (q_{ab})_{a,b \in \{-1,1\}} \in \mathcal{P}$ be a probability
vector on $\{-1,1\}^2$, with marginals
\[
p_a := \sum_{b \in \{-1,1\}} q_{ab}, \qquad
r_b := \sum_{a \in \{-1,1\}} q_{ab}, \qquad a,b \in \{-1,1\},
\]
and recall the notation
\[
m_1({\bf q}) := \sum_{a,b} a\, q_{ab} = p_{+1} - p_{-1}, \qquad
m_2({\bf q}) := \sum_{a,b} b\, q_{ab} = r_{+1} - r_{-1}, \qquad
S({\bf q}) := -\sum_{a,b} q_{ab} \log q_{ab}.
\]
Since $p_{\pm 1} = \frac{1 \pm m_1({\bf q})}{2}$ and $r_{\pm 1} = \frac{1 \pm m_2({\bf q})}{2}$,
the Shannon entropies of the two marginals of $q$ are exactly
$s(m_1(q))$ and $s(m_2(q))$, where $s$ is the binary entropy function
of Equation \eqref{eq:binary-entropy}.

\medskip

\begin{lem}[Subadditivity of entropy]
\label{lem:subadditivity}
For every ${\bf q} \in \mathcal{P}$,
\[
S({\bf q}) \;\le\; s(m_1({\bf q})) + s(m_2({\bf q})),
\]
with equality if and only if ${\bf q}$ is of product form, i.e.
\[
q_{ab} = \left(\frac{1 + a\, m_1({\bf q})}{2}\right)\left(\frac{1 + b\, m_2({\bf q})}{2}\right),
\qquad \forall\, a, b \in \{-1,1\}.
\]
Equivalently, equality holds if and only if the two coordinates are
independent under ${\bf q}$, in which case $\rho({\bf q}) = m_1({\bf q})\, m_2({\bf q})$.
\end{lem}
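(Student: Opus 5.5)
The plan is to reduce the inequality to the nonnegativity of a relative entropy (Gibbs' inequality) and read off the equality case from its strict convexity. Write $p=(p_{-1},p_{+1})$ and $r=(r_{-1},r_{+1})$ for the two marginals of ${\bf q}$, and let $p\otimes r$ be the product measure on $\{-1,1\}^2$ with weights $p_a r_b$. I will use the identity
\[
s(m_1({\bf q}))+s(m_2({\bf q}))-S({\bf q})=\sum_{a,b}q_{ab}\log\frac{q_{ab}}{p_a r_b}=:D({\bf q}\,\|\,p\otimes r),
\]
with the conventions $0\log 0=0$ and $0\log(0/x)=0$. This is just the mutual information of the two coordinates under ${\bf q}$.

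To check the identity, expand $\log(p_ar_b)=\log p_a+\log r_b$. Summing over $b$ gives $-\sum_{a,b}q_{ab}\log p_a=-\sum_a p_a\log p_a$, and this equals $s(m_1({\bf q}))$ because $p_{\pm1}=(1\pm m_1({\bf q}))/2$. The same computation with the roles of $a$ and $b$ exchanged gives $s(m_2({\bf q}))$.

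One point needs care at the boundary. If $p_a r_b=0$, then $q_{ab}\le\min(p_a,r_b)=0$, so $q_{ab}=0$. Hence every term with $q_{ab}>0$ has a strictly positive denominator, and the sum is well defined.

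Nonnegativity then comes from Jensen's inequality applied to the strictly concave function $\log$, under the probability ${\bf q}$ restricted to its support $\{q_{ab}>0\}$:
\[
-D({\bf q}\,\|\,p\otimes r)=\sum_{q_{ab}>0}q_{ab}\log\frac{p_ar_b}{q_{ab}}\le\log\sum_{q_{ab}>0}p_ar_b\le\log 1=0.
\]
This gives the subadditivity inequality $S({\bf q})\le s(m_1({\bf q}))+s(m_2({\bf q}))$.

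For the equality case, both inequalities in the display above must be equalities. Strict concavity forces the ratio $p_ar_b/q_{ab}$ to be a constant on the support. The second equality forces $\sum_{q_{ab}>0}p_ar_b=1$, so $p\otimes r$ puts no mass outside the support of ${\bf q}$. Together these give $q_{ab}=p_ar_b$ for all $a,b$, which is exactly the product form stated in the lemma. Conversely, if ${\bf q}=p\otimes r$, then $D=0$ and equality holds.

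In the product case, $\rho({\bf q})=\sum_{a,b}ab\,p_ar_b=\left(\sum_a a\,p_a\right)\left(\sum_b b\,r_b\right)=m_1({\bf q})\,m_2({\bf q})$.

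The only genuinely delicate step is this boundary bookkeeping: zero entries of ${\bf q}$ and degenerate marginals $m_i=\pm1$. It is handled by the restriction to the support together with the observation $q_{ab}\le\min(p_a,r_b)$.
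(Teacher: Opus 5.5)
Your proof is correct and is essentially the argument the paper relies on: the paper states the lemma without proof and, in the remark following it, identifies it as the non-negativity of the mutual information $I(X;Y)=D({\bf q}\,\|\,P\otimes R)\ge 0$ (Gibbs' inequality, citing Cover--Thomas, Theorem 2.6.6), which is exactly your Jensen-based relative-entropy computation together with its equality case. Your handling of zero entries and degenerate marginals (via $q_{ab}\le\min(p_a,r_b)$) supplies the boundary details that the paper leaves to the reference.
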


\medskip

\begin{rem}
Lemma  \ref{lem:subadditivity} is the two-letter instance, for a
discrete random pair $(X,Y)$ with joint law ${\bf q}$ and marginal laws
$P, R$, of the general subadditivity property of Shannon entropy :
\[
H(X,Y) \le H(X) + H(Y),
\]
where $H(\cdot)$ denotes the entropy; equivalently the non-negativity of the mutual information
$I(X;Y) := H(X) + H(Y) - H(X,Y) \ge 0$, itself a consequence of the
non-negativity of the Kullback--Leibler divergence often written $D({\bf q} \,\|\, P
\otimes R) \ge 0$ (Gibbs' inequality). See \cite[Theorem 2.6.6]{CoverThomas06}
 for the general statement and proof.
\end{rem}

\subsection{Proofs of Theorem \ref{thm:theoremA}, Theorem \ref{thm:theoremB} and Theorem \ref{thm:theoremC}}
\label{subsec:proofABC}

By the key algebraic identity of Subsection \ref{subsec:model}, $(H_n(\sigma,h))_{\sigma\in\Sigma_n,\,n\ge1}$
is the branching random walk with reproduction point process given by
\[
  \mathcal L \;=\; \delta_{X_+}+\delta_{X_-},
  \qquad X_+\sim\mathcal N(h,1),\quad X_-\sim\mathcal N(-h,1), \quad \text{with $X_+$, $X_-$ independent.}
\]
We record once and for all the three features of $\mathcal L$ used below :

\begin{enumerate}[label=\textup{(F\arabic*)},leftmargin=2.6em,itemsep=1pt,topsep=3pt]
\item the number of children is deterministic, equal to $2$. In particular, the branching
  random walk is supercritical, $\E[\#\mathcal L]=2>1$.
\item For every $t\in\R$,
  $\ \E\left[\sum_i \ee^{tX_i}\right]=\ee^{t^2/2}\left(\ee^{th}+\ee^{-th}\right)=\ee^{\psi_h(t)}<\infty$,
  with $\psi_h$ as in Theorem \ref{thm:theoremA}. More generally, every polynomial in $(X_+,X_-)$ times
  $\ee^{tX_\pm}$ is integrable.
\item $X_+$ and $X_-$ have densities, so $\mathcal L$ is non-lattice.
\end{enumerate}
Every supercriticality, integrability and non-lattice assumption required by \cite{biggins76}, \cite{chauvinrouault97}
and \cite{Mallein2018} follows directly from (F1)--(F3). More precisely, Properties (F1) and (F2) are used throughout Theorems \ref{thm:theoremA}, \ref{thm:theoremB} and \ref{thm:theoremC} below while Property
(F3) plays no role for Theorems \ref{thm:theoremA} and \ref{thm:theoremB}, but is required for Theorem \ref{thm:theoremC}, whose proof rests on
Madaule's convergence of the extremal process \cite{madaule2017} and hence on the classical non-lattice
condition.
The only quantitative input is the elementary
lemma below, which locates $\beta_c$ and, in (iv), computes the variance parameter needed
in \cite{Mallein2018}.

\medskip

\begin{lem}
\label{lem:Psi-h}
Fix $h>0$, recall $\psi_h(t)=\tfrac{t^2}{2}+\log2+\log(\cosh(ht))$ and set
$\Delta_h(t):=t\psi_h'(t)-\psi_h(t)$, for $t\ge0$. Then :
\begin{enumerate}[label=\textup{(\roman*)},leftmargin=2.3em,itemsep=1pt,topsep=3pt]
\item $\psi_h'(t)=t+h\tanh(ht)$, $\ 1\le\psi_h''(t)=1+h^2\cosh^{-2}(ht)\le 1+h^2$, and
  $\Delta_h'(t)=t\,\psi_h''(t)>0$.
\item $\Delta_h(0)=-\log2$ and $\Delta_h(t)\ge \tfrac{t^2}{2}-\log2$. Consequently
  $\Delta_h$ is a strictly increasing bijection from $[0,\infty)$ onto $[-\log2,\infty)$
  and vanishes
 at a unique point
$\beta_c=\beta_c(h)\in(0,\infty)$, characterized by 
$$
\beta_c\psi_h'(\beta_c)-\psi_h(\beta_c)=0,
$$
i.e. 
Equation \eqref{eq:beta_c}. Moreover, 
  \[
    \Delta_h(\beta)<0 \ \text{ for }\beta<\beta_c,
    \qquad \Delta_h(\beta)>0 \ \text{ for }\beta>\beta_c.
  \]
\item The function $\Psi_h(t):=\psi_h(t)/t$, for $t>0$, satisfies $\Psi_h'(t)=\Delta_h(t)/t^2$. Hence
  $\Psi_h$ is strictly decreasing on $(0,\beta_c]$, strictly increasing on $[\beta_c,\infty)$, and
  \[
    \inf_{t>0}\Psi_h(t)=\Psi_h(\beta_c)=\psi_h'(\beta_c)=\beta_c+h\tanh(\beta_c h)=\gamma_{\max}(h).
  \]
\item $0<\sigma_h^2:=\beta_c^{\,2}\,\psi_h''(\beta_c)\le \beta_c^{\,2}(1+h^2)<\infty$.
\end{enumerate}
\end{lem}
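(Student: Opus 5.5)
The plan is to prove the four items of Lemma \ref{lem:Psi-h} in order, each following from the previous one by elementary calculus on $\psi_h(t)=\tfrac{t^2}{2}+\log2+\log\cosh(ht)$.

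\textbf{Item (i).} Differentiating gives $\psi_h'(t)=t+h\tanh(ht)$ and $\psi_h''(t)=1+h^2\cosh^{-2}(ht)$. The bounds $1\le\psi_h''\le1+h^2$ then follow from $\cosh\ge1$. For $\Delta_h=t\psi_h'-\psi_h$ we get $\Delta_h'(t)=\psi_h'(t)+t\psi_h''(t)-\psi_h'(t)=t\psi_h''(t)$, which is strictly positive for $t>0$. Strictly speaking it vanishes at $t=0$, which is harmless: strict monotonicity on $[0,\infty)$ still holds.

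\textbf{Item (ii).} First, $\Delta_h(0)=-\psi_h(0)=-\log2$. For the lower bound, write
\[
\Delta_h(t)=\tfrac{t^2}{2}-\log2+\bigl[ht\tanh(ht)-\log\cosh(ht)\bigr].
\]
The bracket is $x\tanh x-\log\cosh x$ with $x=ht$. It vanishes at $0$ and has derivative $x\cosh^{-2}x\ge0$ for $x\ge0$, so it is nonnegative. This gives $\Delta_h(t)\ge\tfrac{t^2}{2}-\log2\to\infty$. Combined with continuity and the strict monotonicity from (i), $\Delta_h$ is a strictly increasing bijection from $[0,\infty)$ onto $[-\log2,\infty)$. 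It therefore has a unique zero $\beta_c>0$, and the sign statements on either side of $\beta_c$ are immediate.

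\textbf{Item (iii).} The quotient rule gives $\Psi_h'(t)=(t\psi_h'-\psi_h)/t^2=\Delta_h/t^2$. The monotonicity of $\Psi_h$ on $(0,\beta_c]$ and on $[\beta_c,\infty)$ then follows from the signs in (ii), so the infimum is attained at $\beta_c$. At that point $\Delta_h(\beta_c)=0$ reads $\psi_h(\beta_c)/\beta_c=\psi_h'(\beta_c)=\beta_c+h\tanh(\beta_c h)$, which equals $\gamma_{\max}(h)$ by definition \eqref{eq:gammamax}.

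\textbf{Item (iv).} This is a direct consequence of the bounds on $\psi_h''$ in (i) together with $\beta_c\in(0,\infty)$.

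There is no real obstacle in this argument. The only step that is not pure bookkeeping is the nonnegativity of $x\tanh x-\log\cosh x$ in (ii), which is what guarantees both that $\Delta_h$ is unbounded and that the zero $\beta_c$ is unique.
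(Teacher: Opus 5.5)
Your proof is correct and follows the paper's argument item by item. The only cosmetic difference is in (ii): the paper obtains $\Delta_h(t)\ge \tfrac{t^2}{2}-\log2$ by integrating $\Delta_h'(s)=s\,\psi_h''(s)\ge s$ from (i), whereas you check $x\tanh x-\log\cosh x\ge 0$ by hand, and that bracket is exactly $\int_0^{ht} x\cosh^{-2}x\,dx$, so this is the same computation. One small correction to your closing remark: the nonnegativity of that bracket gives unboundedness of $\Delta_h$ and hence existence of $\beta_c$, while uniqueness comes from the strict monotonicity in (i), as the body of your proof correctly says.
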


\medskip

\begin{proof}
(i) is a direct computation, and gives $\Delta_h'(t)=t\psi_h''(t)>0$, for $t>0$. $\Delta_h(0)=-\log2$ is trivial, hence
$\Delta_h(t)=-\log2+\int_0^t s\,\psi_h''(s)\,ds\ge \tfrac{t^2}{2}-\log2\to\infty$, when $t \to \infty$, which is
(ii). For (iii), $\Psi_h'(t)=\left(t\psi_h'(t)-\psi_h(t)\right)/t^2=\Delta_h(t)/t^2$ has the
sign of $\Delta_h(t)$ and $\Delta_h(\beta_c)=0$ reads $\psi_h(\beta_c)=\beta_c\psi_h'(\beta_c)$, i.e.\
$\Psi_h(\beta_c)=\psi_h'(\beta_c)$. Finally (iv) follows from the bounds in (i).
\end{proof}

\medskip

We now turn to the proofs of Theorems \ref{thm:theoremA}--\ref{thm:theoremC} themselves, referring to the
notation and to Lemma \ref{lem:Psi-h} introduced above.

\subsubsection*{Proof of Theorem \ref{thm:theoremA}}

Biggins \cite{biggins76} proved that a supercritical branching random walk whose reproduction law has a
finite Laplace transform $m(t):=\E\left[\sum_i \ee^{tX_i}\right]<\infty$, on $(0,\infty)$, satisfies
\begin{equation}
\label{eq:app-1}
  \lim_{n\to\infty}\frac1n\max_{\sigma\in\Sigma_n}H_n(\sigma,h)
  \;=\;\inf_{t>0}\frac{\log m(t)}{t},\qquad\text{a.s.}
\end{equation}
Supercriticality is (F1) and the finiteness of $m$ is (F2), which moreover identifies
$\log m=\psi_h$. No further assumption is needed at this order (the non-lattice property
(F3), required in Theorem \ref{thm:theoremC} below, plays no role here). By Lemma \ref{lem:Psi-h}(iii) the infimum
in Equation \eqref{eq:app-1} is a minimum, attained at the unique point $\beta_c$, and equals $\gamma_{\max}(h)$.
This is the almost sure statement in Equation \eqref{eq:gammamax}.

For the $L^1$ convergence, view  $F_n:=n^{-1}\max_{\sigma\in\Sigma_n}H_n(\sigma,h)$ as a
function of the standard Gaussian vector $(U(v))_{1\le|v|\le n}$, so that it is $n^{-1/2}$-Lipschitz. Gaussian concentration \cite[Theorem
2.2.4]{talagrand2003} then gives $\P(|F_n-\E [ F_n]|>u)\le 2\ee^{-nu^2/2}$, for all $u>0$, so that $(F_n)_{n\ge1}$
is uniformly integrable, and $L^1$ convergence follows from the almost sure convergence.
\hfill$\square$

\subsubsection*{Proof of Theorem \ref{thm:theoremB}}

\medskip

\noindent {\it \underline{Dictionary}.} In \cite{chauvinrouault97}, the branching random walk is minimised : particles sit at positions
$X_u$, the partition function is $Z_n(\beta):=\sum_{|u|=n}\ee^{-\beta X_u}$, the free energy  is $F_n(\beta):=-(n\beta)^{-1}\log Z_n(\beta)$, and, $\lambda$ denoting the
intensity measure of the offspring point process, $m(\beta):=\int_\R \ee^{-\beta x}\lambda(dx)$
and $\ell(\beta):=\log m(\beta)$. Taking $X_u=-H_n(\sigma,h)$ we get
$Z_n(\beta)=Z_{\beta,h,n}$, hence
\begin{equation}
\label{eq:app-2}
  f_n(\beta,h)=-\beta\,F_n(\beta), \qquad\text{and, by (F2),}\qquad \ell=\psi_h .
\end{equation}

\medskip

\noindent {\it \underline{Hypotheses}.} Theorem 1 of \cite{chauvinrouault97}  is proved under the standing assumptions of \cite[Section 2]{chauvinrouault97} --- the offspring population is a.s.\ non-zero and has mean $>1$ --- together with
the first-moment assumption
\[
  (H_0)\qquad \quad m(\beta)<\infty, \qquad \forall \,  \beta\in\R .
\]
The standing assumptions are (F1), and $(H_0)$ is (F2). No further hypothesis is needed.

\medskip

\noindent {\it \underline{The critical constants}.} The two constants $\widetilde\beta_c<\beta_c$ of \cite[Equation (1.1)]{chauvinrouault97}
are defined through the sign of $\beta \ell'(\beta)-\ell(\beta)$, which by Equation \eqref{eq:app-2} is exactly
$\Delta_h(\beta)$. By Lemma \ref{lem:Psi-h}(ii), $\Delta_h<0$ on $[0,\beta_c)$ and $\Delta_h>0$ on
$(\beta_c,\infty)$ : the constant $\beta_c$ of \cite{chauvinrouault97} is therefore the $\beta_c$ of Equation \eqref{eq:beta_c}, and
$\widetilde\beta_c<0$, consistently with the centering $\ell'(0)=\psi_h'(0)=0$, adopted in \cite{chauvinrouault97},
which here holds automatically.

\medskip

\noindent {\it \underline{Conclusion}.} Equations (1.3) and (1.4) of \cite[Theorem 1]{chauvinrouault97}
 read
$F_n(\beta)\to-\ell(\beta)/\beta$ a.s., for $\beta<\beta_c$, and
$F_n(\beta)\to-\ell(\beta_c)/\beta_c=-\ell'(\beta_c)$ a.s., for $\beta\ge\beta_c$. By Equation \eqref{eq:app-2} and Lemma \ref{lem:Psi-h}(iii), when $n \to \infty$,
\[
  f_n(\beta,h)\;\longrightarrow\;
  \begin{cases}
    \psi_h(\beta)=\log2+\dfrac{\beta^2}{2}+\log\cosh(\beta h), & {\rm if  \ } \beta<\beta_c,\\[6pt]
    \beta\,\psi_h'(\beta_c)=\beta\,\gamma_{\max}(h), &  {\rm if  \ }  \beta\ge\beta_c,
  \end{cases}
  \qquad\text{a.s.},
\]
the two expressions agreeing at $\beta=\beta_c$ since $\Delta_h(\beta_c)=0$. Finally, the $L^1$-convergence follows by concentration of measure exactly as in the proof of Proposition \ref{prop:lower},
since $n^{-1}\log Z_{\beta,h,n}$ is $\beta n^{-1/2}$-Lipschitz as a function of the standard Gaussian vector
$(U(v))_{1\le|v|\le n}$. \hfill$\square$

\subsubsection*{Proof of Theorem \ref{thm:theoremC}}

Following the strategy sketched in Subsection \ref{subsec:brw-literature}, consider the tilted branching random walk
\[
  V_n(\sigma,h):=\psi_h(\beta_c)\,n-\beta_c\,H_n(\sigma,h),\qquad \forall \, \sigma\in\Sigma_n, \  \forall \, n\ge1,
\]
whose reproduction point process is $\widetilde{\mathcal L}=\delta_{Y_+}+\delta_{Y_-}$ with
$Y_\pm:=\psi_h(\beta_c)-\beta_c X_\pm$. Since $H_n$ is maximised while $V_n$ is minimised, this is
the orientation used in \cite{Mallein2018}. We check Assumptions (1.1)--(1.4) of \cite{Mallein2018}, together
with the non-lattice assumption.

\medskip

\noindent {\it (1.1) \underline{Supercriticality}.} Immediate from (F1).

\medskip

\noindent
{\it (1.2) \underline{Boundary case}.} By (F2), $\E\left[\sum_i \ee^{\beta_c X_i}\right]=\ee^{\psi_h(\beta_c)}$ and
$\E\left[\sum_i X_i\ee^{\beta_c X_i}\right]=\psi_h'(\beta_c)\ee^{\psi_h(\beta_c)}$, hence
\[
  \E\left[\sum_i \ee^{-Y_i}\right]=\ee^{-\psi_h(\beta_c)}\,\ee^{\psi_h(\beta_c)}=1,
\]
and
\[
  \E\left[\sum_i Y_i\ee^{-Y_i}\right]
  =\ee^{-\psi_h(\beta_c)}\left(\psi_h(\beta_c)\,\ee^{\psi_h(\beta_c)}-\beta_c\,\psi_h'(\beta_c)\ee^{\psi_h(\beta_c)}\right)
  =\psi_h(\beta_c)-\beta_c\psi_h'(\beta_c)=0,
\]
by Lemma \ref{lem:Psi-h}(ii).

\medskip

\noindent
{\it (1.3) \underline{Finite variance}.} Let
$\widetilde\Lambda(\theta):=\log\E\left[\sum_i \ee^{\theta Y_i}\right]
=\theta\psi_h(\beta_c)+\psi_h(-\theta\beta_c)$, finite, for all $\theta\in\R$, by (F2). By Assumption (1.2) of \cite{Mallein2018},
$\widetilde\Lambda(-1)=0$ and $\widetilde\Lambda'(-1)=0$, so that
\[
  \sigma^2:=\E\left[\sum_i Y_i^2\ee^{-Y_i}\right]=\widetilde\Lambda''(-1)
  =\beta_c^{\,2}\psi_h''(\beta_c)=\sigma_h^2\in(0,\infty),
\]
by Lemma \ref{lem:Psi-h}(iv).

\medskip

\noindent
{\it (1.4) \underline{Integrability}.} Since there are only two children and $Y_\pm$ are affine images
of Gaussian variables, $\log_+\left(\sum_i(1+(Y_i)_+)\ee^{-Y_i}\right)\le
C\left(1+|X_+|+|X_-|\right)$ for some $C=C(h,\beta_c)<\infty$, so the left-hand side of the equation in Assumption (1.4) of \cite{Mallein2018} is at
most
\[
  C^2\,\ee^{-\psi_h(\beta_c)}\,
  \E\left[\left(\ee^{\beta_c X_+}+\ee^{\beta_c X_-}\right)\left(1+|X_+|+|X_-|\right)^2\right]<\infty,
\]
by (F2). (In fact $\sum_i\ee^{-Y_i}$ has moments of every order, far more than Assumption (1.4) in \cite{Mallein2018}  requires.)

\medskip

\noindent
{\it \underline{Non-lattice}.} Each $Y_\pm$ is a non-degenerate affine image ($\beta_c\neq0$) of a Gaussian
variable, hence has a density: this is (F3).

\medskip

All assumptions of  \cite{Mallein2018}, being satisfied, we may apply \cite[Theorem 4.3]{Mallein2018} to $V_n(\cdot,h)$.
Write $\widetilde {\mathcal G}_{\beta',h,n}(\sigma):=\ee^{-\beta'V_n(\sigma,h)}/\widetilde
Z_{\beta',h,n}$, with obvious notation for the partition function $\widetilde
Z_{\beta',h,n}$. The deterministic shift $\psi_h(\beta_c)n$ cancels in the normalisation, so that
\[
\mathcal  G_{\beta,h,n}(\cdot)=\widetilde { \mathcal G}_{\beta/\beta_c,h,n}(\cdot),\qquad \forall \, \beta>0,
\]
and the genealogical overlap $q_n$ is unchanged by the tilting, since $|\sigma\wedge\sigma'|$
depends on the tree only. Consequently
$\E\left[ \mathcal G^{\otimes2}_{\beta,h,n}\{q_n\in\cdot\}\right]=\E\left[\omega_{n,\beta/\beta_c}(\cdot)\right]$, with
$\omega_{n,\beta'}$ the overlap measure of \cite[Equation (4.3)]{Mallein2018} (the centering terms $m_n$
cancel in the ratio). Our low-temperature condition $\beta>\beta_c$ is exactly the condition
$\beta'=\beta/\beta_c>1$ of \cite[Theorem 4.3]{Mallein2018}, which yields
\[
  \omega_{n,\beta'}\;\xrightarrow[n\to\infty]{\rm (law)}\;(1-\pi_{\beta'})\,\delta_0+\pi_{\beta'}\,\delta_1,
  \qquad \pi_{\beta'}:=\sum_{k\ge1}p_k^2,
\]
where $(p_k)_{k\ge1}$ is a Poisson--Dirichlet mass partition with parameters
$(1/\beta',0)$.

It remains to pass from this random limit to the deterministic statement of Theorem \ref{thm:theoremC}. 
Recall that $\mathcal P([0,1])$, the space of Borel probability measures on
$[0,1]$ equipped with the topology of weak convergence, is itself a compact metric (hence
Polish) space, and that, by definition of this topology, a sequence $\mu_n\to\mu$ in
$\mathcal P([0,1])$ if and only if $\mu_n(\varphi)\to\mu(\varphi)$ for every $\varphi\in
C([0,1])$ --- equivalently, the topology of weak convergence is the weakest one making every
evaluation map
\[
\mathrm{ev}_\varphi : \mathcal P([0,1]) \to \mathbb R, \qquad \mathrm{ev}_\varphi(\mu):=\mu(\varphi)=\int_{[0,1]}\varphi\,d\mu,
\]
continuous, for $\varphi\in C([0,1])$ (see e.g.\ Billingsley \cite[Chapter 1]{billingsley99}). Moreover $\mathrm{ev}_\varphi$ is bounded on
$\mathcal P([0,1])$ by $\|\varphi\|_\infty$, since $|\mu(\varphi)|\le\|\varphi\|_\infty\,\mu([0,1])=\|\varphi\|_\infty$,
for every probability measure $\mu$.

Now, $\omega_{n,\beta'}$ and $\omega_{\beta'}:=(1-\pi_{\beta'})\delta_0+\pi_{\beta'}\delta_1$ are
random elements of $\mathcal P([0,1])$, and we have just shown
$\omega_{n,\beta'}\xrightarrow{(\mathrm{law})}\omega_{\beta'}$, as $n\to\infty$. By definition of convergence in law of a random element
of a metric space (here $\mathcal P([0,1])$), this means precisely that $\mathbb
E[F(\omega_{n,\beta'})]\to\mathbb E[F(\omega_{\beta'})]$ for every bounded continuous
$F:\mathcal P([0,1])\to\mathbb R$. Applying this to $F:=\mathrm{ev}_\varphi$, which we have just
seen is bounded and continuous, yields directly
\[
\mathbb E\bigl[\omega_{n,\beta'}(\varphi)\bigr] \ \xrightarrow[n\to\infty]{}\ \mathbb
E\bigl[\omega_{\beta'}(\varphi)\bigr] = \bigl(1-\mathbb E[\pi_{\beta'}]\bigr)\varphi(0) + \mathbb
E[\pi_{\beta'}]\,\varphi(1).
\]
Finally, for a Poisson--Dirichlet partition with parameters $(\alpha,0)$ one has
$\E\left[\sum_kp_k^2\right]=1-\alpha$ (see Pitman and Yor \cite{pitmanyor97}). Here $\alpha=1/\beta'=\beta_c(h)/\beta$, so that
$\E[\pi_{\beta'}]=1-\beta_c(h)/\beta$ and
\[
  \lim_{n\to\infty}\E\left[ \mathcal G^{\otimes2}_{\beta,h,n}\{q_n(\sigma,\sigma')\in\cdot\}\right]
  =\frac{\beta_c(h)}{\beta}\,\delta_0+\left(1-\frac{\beta_c(h)}{\beta}\right)\,\delta_1 ,
\]
which is Theorem \ref{thm:theoremC}. (Remark \ref{rem:PD} is obtained in the same way from \cite[Theorem 4.1]{Mallein2018}, the
Gibbs weights being those of the Poisson--Dirichlet partition with parameter
$1/\beta'=\beta_c(h)/\beta$.) \hfill$\square$


\bibliographystyle{abbrv}
\bibliography{biblio}


\end{document}